\documentclass[sn-mathphys,Numbered]{sn-jnl}

\usepackage{graphicx}%
\usepackage{multirow}%
\usepackage{amsmath,amssymb,amsfonts}%
\usepackage{amsthm}%
\usepackage{mathrsfs}%
\usepackage[title]{appendix}%
\usepackage{xcolor}%
\usepackage{textcomp}%
\usepackage{manyfoot}%
\usepackage{booktabs}%
\usepackage{algorithm}%
\usepackage{algorithmicx}%
\usepackage{algpseudocode}%
\usepackage{listings}%

\usepackage{graphicx,latexsym}
\usepackage{epsfig}   
\usepackage{xcolor}
\usepackage{multicol}
\usepackage{pgfplots}
\usepackage{color}
\usepackage{amsthm}
\usepackage{subcaption}
\usepackage{cleveref}
\usepackage{soul}
\newcommand{\re}{\mathbb{R}}

\newcommand{\rev}[1]{{#1}}
\newcommand{\revnew}[1]{{#1}}
\newcommand{\chri}[1]{{#1}}
\newcommand{\gianni}[1]{{#1}}
\newcommand{\revmax}[1]{#1}
\newcommand{\revgianni}[1]{#1}
\newcommand{\ie}{\textit{i.e.}, }
\newcommand{\eg}{\textit{e.g.}, }

\newcommand{\revnewmax}[1]{\textcolor{black}{#1}}

\newcommand{\revnewgianni}[1]{\textcolor{black}{#1}}

\numberwithin{equation}{section}

\setstcolor{red}

\theoremstyle{thmstyleone}%
\newtheorem{theorem}{Theorem}[section]
\newtheorem{lemma}[theorem]{Lemma}%
\newtheorem{observation}[theorem]{Observation}%

\theoremstyle{thmstyletwo}%

\theoremstyle{thmstylethree}%
\newtheorem{definition}[theorem]{Definition}%

\begin{document}
\title[Benchmarking optimization algorithms...]{
\vspace*{-0.7truecm} \huge Benchmarking Optimization Algorithms with Quality Profiles and Test Set Profiles}

\author*[1]{\fnm{Giovanni} \sur{Fasano}}\email{fasano@unive.it}

\author[2]{\fnm{Christian} \sur{Piermarini}}\email{piermarini@diag.uniroma1.it}

\author[2]{\fnm{Massimo} \sur{Roma}}\email{roma@diag.uniroma1.it}

\affil[1]{\orgdiv{Venice School of Management},  \orgname{University Ca' Foscari},  \orgaddress{\street{San~Giobbe~Cannaregio~873}, \city{Venezia}, \postcode{30121}, \country{Italy}}}

\affil[2]{\orgdiv{Dep. of Computer, Control and Management Engineering ``A. Ruberti''},   \orgname{SAPIENZA University of Rome},  \orgaddress{\street{via Ariosto 25}, \city{Rome}, \postcode{00185}, \country{Italy}}}


\abstract{We propose a couple of novel tools  
for benchmarking optimization algorithms which possibly converge to different solutions on a test set: the
\emph{quality profiles} and the \emph{test set profiles}. \rev{Their aim is to} \rev{assess and compare algorithms} in terms of quality \gianni{(\ie value of the objective function)} of the obtained solutions, as well as to assess the consistency of the test set. A key distinguishing feature of the quality profiles we propose is its comparative deterministic procedure that emphasizes the accuracy of the solution, rather than the computational burden of solvers. In this regard, several \revnewgianni{test set--dependent} approaches for both \rev{comparing} and ranking algorithms have already been proposed in the literature, \rev{representing widely used} benchmarking procedures. \rev{We believe that the joint use of such procedures, along with the novel quality profiles detailed here, should enhance the benchmarking process, in all those cases where the comparison encompasses exact methods as well as heuristics. Moreover, the literature on numerical optimization seems to have paid less attention, in the last decade, to determining how appropriate a test set used for benchmarking the selected solvers may be. This motivates the introduction of test set profiles, which assess the appropriateness of a test set and represent the flip side of evaluating the robustness of the solvers on that test set. This paper also includes extensive numerical experiments, showing the usefulness of quality profiles in both smooth and nonsmooth (\eg derivative--free) optimization, along with the reference to a {\tt MATLAB} code for plotting quality profiles and test set profiles.}
}

\keywords{Benchmarking solvers, Accuracy assessment of codes,  Quality profiles, Performance evaluation, Absolute and relative benchmarking.}



\maketitle

\section{Introduction}
\label{sec:intro}
Benchmarking algorithms for local constrained and unconstrained optimization
has always been a challenging issue. Any \rev{proposed} \gianni{benchmarking} procedure needs to be unbiased with respect to the features of the compared solvers, as well as robust with respect to the test set used for benchmarking.
In recent years, many tools have been proposed to perform comparisons \gianni{among} optimization \rev{algorithms}\gianni{,}
in terms of various performance measures. These comparative studies are \rev{also} very beneficial for users to understand which optimization method fits their needs the most.
\rev{Of course, in order to obtain \gianni{meaningful} conclusions} and to detect potential \rev{weaknesses and/or strengths,}
solvers must be tested on a variety of different scenarios\gianni{. Hence,} benchmarking \rev{procedures} must be capable of \rev{clearly} summarizing the overall results obtained \rev{on} a large set of test problems. Moreover, benchmarking procedures are often adapted to better take into account the peculiarities of the optimization framework where solvers are embedded (see, \eg the benchmarking procedures for derivative--free/\rev{black--box} optimization algorithms \cite{more.wild, hansen2021coco}).
\par
\rev{As is well--known (see, \eg \cite{beiranvand2017best}),  performance measures for assessing an optimization algorithm}  are typically rated over three main \rev{categories}: \rev{\em i)} \textit{efficiency},  \rev{\em ii)} \textit{reliability \rev{and robustness}}, \rev{\em iii)} \textit{quality of 
\rev{the achieved solution}}.
As regards the \textit{efficiency}, a number of comparative procedures \gianni{from the literature} have already proven to be helpful to gauge it.
\emph{Performance profiles} by Dolan and Mor\'e \cite{dolan.more} along with \emph{data profiles} by Mor\'e and Wild \cite{more.wild}, are undoubtedly among the most adopted tools in \gianni{both constrained and unconstrained} \rev{continuous optimization} \gianni{frameworks}.
The former are widely considered a standard tool for benchmarking optimization solvers; they are based on measuring the convergence effort through indicators \gianni{like} CPU time, number of performed iterations or number of required function evaluations. On the other hand, data profiles typically target derivative--free optimization solvers\gianni{, assuming different scenarios where} increasing values for the accuracy are considered.
\rev{However, while in a data profile the track associated to a specific solver is, to some extent, independent \gianni{of {\em all} the} other solvers included in the comparison, this is not the case for performance profiles. In fact, the latter only highlight performance with respect to the best algorithm and they do not allow ranking \gianni{the $i$-th} solver relative to \gianni{the $j$-th one} which is not the best (see \cite{gould2016note,hekmati2019nested}).}
\par
\textit{Reliability} \rev{and \textit{robustness}} of an optimization algorithm \rev{represent} a solver's ability to \rev{successfully \gianni{and appropriately} solve problems. They  can be measured in terms of number of test problems, in the test set, solved by an algorithm. The \emph{success rate} is commonly adopted for evaluating algorithmic reliability (see \cite{beiranvand2017best} and the references reported therein).}
Clearly, the success rate is not a profiling method compared to performance profiles or data profiles\gianni{,}
nor it benefits from the same properties of the aforementioned benchmarking procedures. \rev{Note that, performance profiles also include information on the robustness of an algorithm (this can be evinced by observing the corresponding \gianni{algorithm plot,} for sufficiently large values of the abscissa).}
\par
\gianni{Finally,} \rev{comparing} solvers \gianni{with respect to} the \textit{quality of the achieved solution} is extremely valuable as well, and it represents the main focus of this work. \gianni{Furthermore, we are persuaded that the issue {\em iii)} has not yet had a reasonably wide consideration in the literature of numerical optimization, with the exclusion of few proposals (see, \eg  \cite{beiranvand2017best,audet.hare.book,hare2006benchmark} and therein references). In this regard,}
\rev{denoting by $f^{(p)}(x)$ the objective function of the test problem $p$, if a (local) optimal solution $x^\star$  of the problem $\min_{x \in {\cal F}}f^{(p)}(x)$, with ${\cal F} \subseteq \re^n$, \gianni{were} available, once a solver $s$ has determined 
the solution point $\hat{x}$, the difference $f^{(p)}(x^\star)-f^{(p)}(\hat x)$ can represent a measure of the ``quality'' (accuracy) of the solution \gianni{by the solver}. Such difference is usually normalized, \ie the quantity
$$f^{(p)}_{\rm acc}=
\frac{f^{(p)}(x^\star)-f^{(p)}(\hat x)}{f^{(p)}(x^\star)-f^{(p)}(x_0^{(p)})}
$$
}%
\gianni{can be} monitored, where \gianni{$x_0^{(p)}$ is the (common)} starting point adopted by \gianni{any} algorithm $s$ for solving the test problem $p$. Note that, since in most cases the optimal solution for all the problems of the test set is unavailable, $f^{(p)}(x^\star)$ can be replaced \gianni{by} the best solution found  \revmax{for the problem $p$ by the different solvers}\gianni{, or an appealing reference value suggested by the literature}. The base $10$ logarithm of $f^{(p)}_{\rm acc}$ may be  considered, in order to \gianni{provide a quantitative} indication of the number of digits of accuracy obtained (see, \eg \cite{beiranvand2017best,audet.hare.book}). Then, this measure of accuracy can be used to construct {\em accuracy profiles}: they show the proportion of problems for which a \gianni{given} solver is able to determine a solution within a certain accuracy, with respect to the \gianni{solution obtained by the best solver}. It is important to note that this benchmarking procedure can be only applied if \gianni{a} {\em fixed--cost} approach is adopted, \ie if a prefixed number of function evaluations, number of iterations or a prefixed CPU time is used as stopping criterion \gianni{for the compared algorithms}. This motivates the use of the  accuracy profiles mostly within  derivative--free/black--box optimization (see \cite{audet.hare.book}), where typically an overall budget of function evaluations is fixed. However, quoting from \cite[page 275]{audet.hare.book}, ``{\em accuracy profiles ignore the number of function evaluations required to achieve the presented results, and thus accuracy profiles can be strongly biased when different algorithms use significantly different numbers of function evaluations}''.
\par
In this paper, drawing inspiration also from accuracy profiles, we propose a novel general benchmarking procedure that can be used for comparing exact and heuristic continuous optimization algorithms. The aim is first to introduce a \gianni{specific} useful tool that allows comparisons among algorithms in terms of ``quality'' of the found solution. They can be \gianni{coupled with} the performance (or data) profiles, to have more complete information on algorithms behaviour. There is a further motivation to introduce and adopt quality profiles. Indeed, observe that when performance profiles are used, for a fair comparison, test problems where algorithms converge to different solutions are excluded from the comparison. It could be meaningless indeed to compare the performance, in terms of computational burden, for algorithms converging towards different solution points.
In this regard, to the best of these authors' knowledge, a complete tool for providing in a condensed graphical form \gianni{the} capability of an algorithm to determine \gianni{better/worse quality} solutions is actually unavailable.

\par
\rev{Therefore,} \gianni{this paper addresses} a new benchmarking procedure, to evaluate the {\em quality} of the solution points computed by different optimization algorithms, in a similar fashion with respect to the aforementioned accuracy profiles. \rev{In particular, given the starting point $x_{0}^{(p)}$ for a test problem $p$ and a reference value $f^{(p)}_{L}$ \gianni{associated to the problem $p$}, the quality profiles are obtained based on the following inequality
\begin{equation}
	f_s^{(p)}(x^\ast) - f_L^{(p)}  \leq \tau \left[ f^{(p)} ( x_0^{(p)} ) - f_L^{(p)} \right],
\end{equation}
where $f_s^{(p)}(x^\ast)$ is the optimal function value determined by the solver $s$ on the problem $p$}, and $\tau$ represents a  parameter in the interval $[0,1]$.
Among the basic relevant features of the proposal in the current paper we find the following ones (see the subsequent sections for details):
\begin{itemize}
    \item our benchmarking system inherits almost the same appealing features of the profiling methods in \cite{dolan.more} (\textit{performance profiles}) and \cite{more.wild} (\textit{data profiles}). Moreover, some additional results are explicitly proved which do not hold for the proposals in \cite{dolan.more} and \cite{more.wild};
    \item \gianni{unlike \cite{dolan.more}, \cite{more.wild} and \cite{audet.hare.book} the proposal in the current paper does not require any {\em a priori} identification of parameters/thresholds, which risks to introduce biases in the benchmarking\footnote{Indeed in \cite{dolan.more} and \cite{more.wild} the parameters $r_M$ and $\mu_f$ need to be set. On the other hand, in \cite{audet.hare.book} the choice of a priori parameters is definitely a less relevant issue for the following reason. In \cite{audet.hare.book} the authors require to define a threshold for the quantity $1-f_{acc}^{(p)}$, when it becomes close to zero and $\log_{10}(1-f_{acc}^{(p)})$ turns out to be undefined. This equivalently corresponds to assume that a finite (though very large) maximum precision for the solvers is decided a priori, \revnewmax{which} is typically sufficient in most of the practical applications. Conversely, the proposal in the current paper allows for gauging a precision for solvers which is possibly not bounded by any finite positive value (see the choice of the parameter $r_1$ in \eqref{eq:def_Qs_b}).};}
    \item we allow a clearer comparison among different algorithms through plots, where zooming opportunities are also allowed  with respect to the axes in the plots. We strongly remark that the last property is not immediately extendable neither to \cite{dolan.more} nor to \cite{more.wild} \gianni{or \cite{audet.hare.book}};
    \item the proposed tool is able to suggest a ranking procedure \gianni{among} many codes, by using the objective function values at the solution points. In this regard, benchmark test problems where solvers possibly do not converge or converge to different solutions, do not need to be excluded.
\end{itemize}

This paper also addresses a slightly different issue: namely we are persuaded that a quantitative tool that can measure how appropriate is the {\em test set} for {\em solvers}, may be impactful. This last issue may be declined from two complementary perspectives: the consistency of the test set for the chosen solvers (see, \eg the seminal paper \cite{wolpert1997no}), and the robustness of the solvers on the given test set. Hence, apart from quality profiles, we intend to propose here a second tool, namely the {\em test set profiles}, where the last two perspectives are jointly studied and effectively depicted using a practical graphical representation.
\par
{An important remark is that the goal of quality profiles is not to replace existing benchmarking tools through a new axiomatic principle. Rather, quality profiles are intended as a complementary device, specifically designed to assess the quality of the final objective values returned by competing solvers. Methodologically, this differs from the line of work by Liu et al., which proceeds axiomatically: desirable properties of a comparison rule are postulated and violations are exhibited as paradoxes, such as cycle ranking and survival of the nonfittest. Of the several related contributions, we focus on \cite{liu2020paradoxes}, where these paradoxes are first identified, and \cite{yan2022paradox,ZHI2024}, where sufficient conditions are derived and applied. Since these conditions concern how a rule is built rather than what it measures, they apply in principle to quality profiles too; a preliminary look, detailed in Section 2.1, suggests a mixed picture depending on the reference value adopted. This points to a genuine literature gap: no quality-oriented profile has yet been examined from this axiomatic viewpoint. Indeed, our approach here proceeds constructively rather than axiomatically. A condensed tool for comparing solution quality when solvers converge to different points was, to our knowledge, still lacking. We fill this gap by directly defining a new procedure and establishing its local properties (affine invariance, monotonicity), addressing a dimension of benchmarking distinct from, but assessable through, the cross-set consistency properties studied by Liu et al.}
\par
{Benchmarking literature has seen advancements
aimed at eliminating evaluation biases induced by arbitrarily chosen target levels or
localized performance thresholds. In \cite{da2010attainment} an attainment-function
approach is formalized for stochastic multiobjective optimizers: grounded in random
closed-set theory, the attainment function is cast as a mean of the distribution of
optimizer outcomes in the objective space, with higher-order versions capable of
capturing further aspects of this distribution and, in principle, a full distributional
characterization. 
More recently, in \cite{lopez2024using} this paradigm is extended to single-objective
black-box optimization, showing that a traditional target-based Empirical Cumulative Distribution Function (ECDF), such as the one adopted in black-box
optimization benchmark suites, is an approximation of the Empirical Attainment Function (EAF). The EAF offers several advantages over the target-based ECDF,
most notably that it does not require defining a priori quality targets, while
still capturing performance differences more precisely. The authors further show
that the average Area Over the Convergence Curve (AOCC) is an equivalent, but
computationally simpler, summary statistic of anytime performance. Our proposed quality profiles share a philosophical alignment with this paradigm,
as they too avoid the need to specify a priori quality targets that risk biasing the
comparison. However, while the EAF and its multiobjective precursor track probabilistic
anytime performance across continuous runtime distributions, quality profiles operate
on a deterministic, budget-driven paradigm particularly suitable for derivative-free
and black-box optimization, evaluating solution precision under a sequence of fixed,
expanding computational budgets. We remark that this does not make quality profiles
parameter-free in an absolute sense. The scaling exponents $r_1, r_2$ introduced in
Section~2.1 retain a role, though a purely visualization-oriented one, distinct from
the a priori success thresholds that the attainment-function paradigm dispenses with.}
\par
{Several further strands of research are relevant for positioning the
contribution of quality profiles. \cite{smith2014objective} emphasized that benchmark
conclusions may depend strongly on the chosen instances, and proposed an
instance-space methodology to identify where algorithms are strong or weak and
whether a test set is sufficiently representative. This perspective is complementary
to ours: while quality profiles summarize the distribution of attained solution
quality over a benchmark set, instance-space analysis explains how such behavior
varies across classes of instances. Methodology-oriented contributions, such as
\cite{willemsen2024methodology}, focus instead on fair experimental design, budget
definition, stochasticity, and reproducible reporting; this addresses a different
level of the benchmarking problem, since quality profiles are not intended to replace
such methodological frameworks, but to provide a descriptive tool for one specific
performance dimension, namely the final objective value reached by the solver.
Earlier discussions on benchmarking procedures, such as \cite{opara2023benchmarking},
provide broader guidelines on sound experimental practice, complementary to the
narrower focus of quality profiles on solution quality. Finally, \cite{porcelli2017note} warns that performance and data profiles, if used naively as an objective function to train or tune an algorithm, may induce overfitting or degenerate behavior. This caveat would apply equally to quality profiles if they were used in future work to guide algorithm tuning. Taken together, these contributions frame quality profiles as one component of a broader benchmarking ecosystem, alongside representative test sets, sound experimental methodology, and structurally consistent comparison rules.}

\revnewmax{
A related line of work concerns the design and selection of benchmark suites. For instance, \cite{https://doi.org/10.1002/wics.70028} review and classify benchmark and test functions according to properties such as modality, dimensionality, separability, smoothness, constraints, and noise/stochasticity, and propose best practices for selecting informative and representative benchmarks. This perspective is complementary to the one adopted here. Indeed, the goal of test set profiles is not to replace ex ante recommendations for benchmark construction, but rather to provide an ex post diagnostic tool for assessing whether a selected test set is actually informative, consistent, and sufficiently discriminating with respect to the solvers under comparison. In this sense, benchmark-selection guidelines and test set profiles address two different stages of the benchmarking process: the former concern how to build a suitable test suite, while the latter concern how to evaluate the adequacy of that suite once the experiments have been performed.
}

\revnewmax{
Another important related contribution is the work \cite{Eckman_et_al_2023} by Eckman et al., who develop diagnostic tools for evaluating and comparing simulation-optimization algorithms and explicitly advocate the use of bootstrapping to obtain error estimates for the corresponding estimators and plots. This reference is important for the present work, since it shows that bootstrap-based uncertainty assessment is already a relevant idea in benchmarking and diagnostic analysis. However, the primary focus of the two contributions is different. In \cite{Eckman_et_al_2023}, bootstrapping supports the statistical assessment of solver-comparison diagnostics under stochastic simulation. In the present work, by contrast, test set profiles are introduced to assess the adequacy, informativeness, and discriminating power of the test set itself with respect to a family of solvers. Thus, bootstrapping should be viewed here as a supporting inferential device rather than as the core novelty of the proposal. The novelty of the present proposal lies not in bootstrapping as a statistical technique, but in the introduction of test set profiles as the primary object of diagnostic assessment.
}
\par

The paper is structured as follows: \rev{in Section~\ref{sec:qp} quality profiles for smooth optimization are defined and some important features are shown\gianni{, as well as some insights in a comparison with the current literature of benchmarking procedures}. Section~\ref{sec:application} reports the results of two different numerical experimentations where quality profiles are adopted for comparing algorithms, in terms of quality of the achieved solution. Section~\ref{sec:quality_profiles_derivative--free} is devoted to use quality profiles when benchmarking derivative--free solvers. 
Section~\ref{sec:test_set_profiles} introduces the test set profiles and their potential application. Finally, in Section~\ref{sec:conclusions} some  concluding remarks are reported.}
\par

\section{Quality Profiles for smooth optimization}\label{sec:qp}
The main purpose of the present section is to introduce some basic theoretical properties of a novel class of profiles, that we name {\em quality profiles}. In the sequel we use as a reference optimization problem, without loss of generality, the following one:
    $$ \min_{x \in {\cal F}} f(x), $$
\rev{where $f : {\cal F} \longrightarrow \re$}, being ${\cal F} \subseteq \re^n$ the feasible set. \revgianni{Of course any unconstrained (${\cal F} \equiv \re^n$) and constrained (${\cal F}  \subset  \re^n$) optimization problem falls within the last category}.
\par
Assume the set ${\cal S}$ of solvers and the set ${\cal P}$ of test problems are considered, {\color{black}being $|{\cal S}| \geq 2$ and $|{\cal P}|\geq 1$ their cardinalities}. Let $x_0^{(p)} \in \re^n$ be the \revgianni{common} starting point for {all} the solvers on the {test} problem $p \in {\cal P}$, and {let $f^{(p)}(x)$ be the objective function of the problem $p \in {\cal P}$.} Let $f_L^{(p)}$ {\color{black}be a} reference value for the objective function of the problem $p$. If $x^\ast$ indicates the best point found by the solver $s \in {\cal S}$ on the test problem $p$ (observe that for the sake of simplicity{\color{black}---and with the idea of reducing redundancy---}we drop from $x^\ast$ the dependency on both $s$ and $p$), as remarked also in \cite{dolan.more} and \cite{more.wild}, possible choices for $f_L^{(p)}$ can be, {\color{black}for instance}:
\begin{enumerate}
	\item $\displaystyle f_L^{(p)} = \min_{s \in {\cal S}} \left\{ f^{(p)}_s(x^\ast) \right\}$ \gianni{or \  $\displaystyle f_L^{(p)} = \alpha f^{\ast}$, with \  $f^{\ast}= \displaystyle \min_{s \in {\cal S}} \left\{ f^{(p)}_s(x^\ast) \right\}$, and where $0 < \alpha < 1$ in case $f^{\ast}>0$ or $\alpha > 1$ in case $f^{\ast}<0$,}
    \label{page_4}
	\item a {\em reference value} generated by {\color{black}an algorithm (typically an efficient one) possibly not included in the list of the compared solvers},
	\item $\displaystyle f_L^{(p)} = \displaystyle \frac{1}{|{\cal S}|} \sum_{s \in {\cal S}} \left\{ f^{(p)}_s(x^\ast) \right\}$,
\end{enumerate}
{where $f^{(p)}_s(x^\ast)$ denotes the optimal function value determined by the solver $s$ on the problem $p$.}
In this regard, {\color{black} in accordance with \cite{dolan.more,more.wild}}, we adopt the first {one} among the above three choices setting $\alpha=1$, namely $ f_L^{(p)} = \min_{s \in {\cal S}} \left\{ f^{(p)}_s(x^\ast) \right\}$, and define for each solver $s \in {\cal S}$ the ratio
\begin{equation}
	Q_s(\tau) = \frac{1}{|{\cal P}|} {\rm size} \left\{ p \in {\cal P} \ : \ f_s^{(p)}(x^\ast) - f_L^{(p)}  \leq \tau \left[ f^{(p)} ( x_0^{(p)} ) - f_L^{(p)} \right] \right\},        \label{eq:def_Qs}
\end{equation}
being $\tau \geq 0$. Since evidently for {\color{black}any solver $s$ we have
	\begin{equation}
		f_s^{(p)}(x^\ast) \leq f^{(p)} ( x_0^{(p)}),        \label{equ:better_than_x0}
	\end{equation}
then for any $s \in {\cal S}$ we obtain $Q_s(\tau)=1$ {for all} $\tau \geq 1$. This immediately suggests that} it suffices to consider hereafter for the parameter $\tau$ the range
$$ \tau \in [0,1]. $$

{We can now} introduce the {following} formal definition of {\em quality profiles}.
\par\medskip
\revgianni{
\begin{definition}
	\label{def_quqlity_profiles}
	Given the benchmark set {of test problems} ${\cal P}$ and the set of solvers ${\cal S}$, the corresponding {\em Weak Quality Profiles} are defined as the collection of plots of all the functions $\{Q_s(\tau)\}$ {in \eqref{eq:def_Qs}} when $\tau \in [0,1]$. In case in \eqref{eq:def_Qs} the quantity
		$$ f^{(p)}(x_0^{(p)}) $$
	were replaced by the quantity
		$$ \max_{s \in {\cal S}}f_s^{(p)}(x^*), $$
	then we will use the name {\em Strong Quality Profiles}.
\end{definition}
}
\par\medskip
A trivial consequence of Definition~\ref{def_quqlity_profiles} is the following. Similarly to performance profiles and data profiles, when the plot $Q_s(\tau)$ of a solver $s \in {\cal S}$ is mostly located in an upper position, for $\tau \in T \subseteq [0,1]$, it means that for $\tau \in T$ the code $s$ solves a large percentage of test problems where a good minimizer is detected. \revnewgianni{As another similarity with data profiles (see relation (2.2) of \cite{more.wild}), the quantity on the left hand side of \eqref{eq:def_Qs} represents a normalization term, relying on both $x_0^{(p)}$ and $f_L^{(p)}$. The rationale behind this choice is suggested by the aim of monitoring the progress of algorithms with respect to the same given initial iterate, even possibly a poor one. This is justified by the {\em a priori} lack of specific knowledge about a more appropriate choice of the initial point, that might yield a less distortive comparison.}

As preliminary facts, the next considerations hold for the quality profiles:
\begin{itemize}
	\item in case of strong quality profiles the user is interested about the mutual behavior among solvers (\ie the focus is on addressing the performance with respect to the {\em worst solution found} by a solver), possibly disregarding the bias that the reference to the initial point $x_0^{(p)}$ may introduce. Conversely, weak quality profiles both reveal a reference to the initial point $x_0^{(p)}$, and address	the reciprocal ranking among solvers, with specific attention to the actual range of values of the objective function, on each test problem, \ie $f^{(p)} ( x_0^{(p)} ) - f_L^{(p)}$;
	
	\item weak quality profiles seem appropriate in a wide range of applications. Conversely, strong quality profiles seem more appropriate in those cases where solvers show close values of the objective function at the solution;
	
	\item by \eqref{eq:def_Qs} we have $0 \leq Q_s(\tau) \leq 1$, for any solver $s\in {\cal S}$ and for any value $\tau \in [0,1]$;
	\item for $\tau=0$ the {\color{black}value} $Q_s(0)$ indicates the percentage of problems where the solver $s$ provides the best reference value of the objective function $f_L^{(p)}$ (note that {\color{black}an almost identical} property holds for the performance profiles when the abscissa value is equal to 1);
	\item quality profiles suitably take into account possible failures of the solver $s$ on some benchmark problems: {\color{black}indeed in the last case the corresponding problem will be discarded and will not contribute to increase the quantity $size \{ \cdot \}$ in \eqref{eq:def_Qs}. Hence, we will have $Q_s(1) < 1$ for the corresponding solver $s$ (again, here we can immediately realize that a similar property holds for performance profiles)}.
\end{itemize}
\rev{Observe that} performance profiles can hardly be seen as a possible alternative to quality profiles, for at least a couple of reasons:
	\begin{itemize}
		\item since for a given solver $s\in {\cal S}$ and a test problem $p\in {\cal P}$ we might have $f_s^{(p)}(x^\ast) \gtreqless 0$,  \rev{the use of a performance profile \gianni{where} the \textit{objective function value} is adopted as benchmarking measure, } might not be well posed; indeed, some of the positive ratios $r_{p,s}$  needed to compute performance profiles \rev{(see \cite{dolan.more})}, might be undefined (when $f_s^{(p)}(x^*)=0$)  or inconsistent (when $|f_s^{(p)}(x^*)| \approx 0$ or $f_s^{(p)}(x^*)<0$);
		\item if performance profiles were used to benchmark the final value of the objective function, then they would completely disregard the {starting point} $x_0^{(p)}$, while the definition \eqref{eq:def_Qs} explicitly aims at monitoring the progress of each solver with respect to the function value at the initial iterate {$x_0^{(p)}$} (weak quality profiles) or at $ \hat x_0^{(p)} \in argmax_{s \in {\cal S}} \{f_s^{(p)}(x^*)\}$ (strong quality profiles).
	\end{itemize}
A number of additional theoretical results can be proved for quality profiles, as \gianni{remarked} in the next lemmas. For the sake of simplicity hereafter we refer to weak quality profiles, holding analogous properties for strong quality profiles, too.
\par\medskip
\begin{lemma}
	\label{lem1}
	Weak quality profiles are invariant under any consistent affine transformation of the objective functions.
\end{lemma}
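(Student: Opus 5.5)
The plan is to fix what a ``consistent affine transformation'' means and then show that the defining inequality in \eqref{eq:def_Qs} is unchanged, problem by problem. Interpret it as follows: for each test problem $p \in {\cal P}$ the objective $f^{(p)}$ is replaced by $g^{(p)}(x) = a_p f^{(p)}(x) + b_p$, with $a_p > 0$ and $b_p \in \re$. ``Consistent'' means the same pair $(a_p,b_p)$ is applied to problem $p$ for every solver $s \in {\cal S}$, and $a_p>0$ so that minimization is preserved. Note that $a_p,b_p$ may differ across problems. Since each solver faces the same minimization problem up to this monotone rescaling, we take the returned point $x^\ast$ to be unchanged, so $g^{(p)}_s(x^\ast) = a_p f^{(p)}_s(x^\ast) + b_p$.

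The key steps, in order, are these.
\begin{enumerate}
\item Compute the transformed reference value. Because $a_p>0$, the map $t \mapsto a_p t + b_p$ is increasing, so
\[
g_L^{(p)} = \min_{s\in{\cal S}} g^{(p)}_s(x^\ast) = a_p f_L^{(p)} + b_p .
\]
\item Observe that the constant $b_p$ cancels in both differences appearing in \eqref{eq:def_Qs}:
\[
g^{(p)}_s(x^\ast) - g_L^{(p)} = a_p\bigl[f^{(p)}_s(x^\ast) - f_L^{(p)}\bigr],
\qquad
g^{(p)}(x_0^{(p)}) - g_L^{(p)} = a_p\bigl[f^{(p)}(x_0^{(p)}) - f_L^{(p)}\bigr].
\]
\item Divide the transformed inequality by $a_p>0$. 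This recovers exactly the original inequality for every $\tau \in [0,1]$. Hence problem $p$ belongs to the set counted in \eqref{eq:def_Qs} after the transformation if and only if it did before.
\item Since $|{\cal P}|$ is unchanged, $Q_s(\tau)$ is identical for all $s$ and all $\tau$, and so are the weak quality profiles.
\end{enumerate}

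For strong quality profiles the same argument applies to $\max_{s} f^{(p)}_s(x^\ast)$, which also transforms as $a_p(\cdot)+b_p$ by monotonicity.

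The only delicate point is the definitional one rather than the algebra. The proof must state that $a_p>0$; a negative $a_p$ turns minima into maxima and breaks both the identity for $f_L^{(p)}$ and inequality \eqref{equ:better_than_x0}. It must also state that solvers' outputs are assumed invariant under the rescaling, or equivalently that the statement concerns the recorded function values. Failed runs remain failures under the transformation and are excluded from the count exactly as before, so they need no separate treatment.
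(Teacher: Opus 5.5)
Your proposal is correct and follows the same route as the paper. The constant $b$ cancels in both differences, the positive factor $a$ is divided out, and so each problem's membership in the counted set is unchanged; your monotonicity justification that $f_L^{(p)}$ maps to $a_p f_L^{(p)}+b_p$, and your assumption that the returned $x^\ast$ is unchanged, make explicit what the paper's proof uses tacitly.
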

\begin{proof}
	Let us consider, without loss of generality, the benchmark problem $p \in {\cal P}$ and the corresponding objective function $f^{(p)}(x)$. For any $a>0$ (to make the affine transformation consistent) and $b \in \re$ we define the affine transformation $a f^{(p)}(x) + b$ so that from the definition of $\{Q_s(\tau)\}$, and for any $s \in {\cal S}$, it is
	$$ \begin{array}{l}
		\left[ \left( a f_s^{(p)}(x^\ast) + b \right)  - \left( a f_L^{(p)} + b \right) \right] = a \left[ f_s^{(p)}(x^\ast) - f_L^{(p)} \right]       \\
		\                                       \\
		\left[ \left( a f^{(p)}(x_0^{\gianni{(p)}}) + b \right)  - \left( a f_L^{(p)} + b \right) \right] = a \left[ f^{(p)}(x_0^{\gianni{(p)}}) - f_L^{(p)} \right].
	\end{array} $$
	Thus, the next three inequalities are equivalent
	$$ \begin{array}{c}
		\left[ \left( a f_s^{(p)}(x^\ast) + b \right)  - \left( a f_L^{(p)} + b \right) \right] \leq \tau \left[ \left( a f^{(p)}(x_0^{\gianni{(p)}}) + b \right)  - \left( a f_L^{(p)} + b \right) \right],      \\
		\           \\
		a \left[ f_s^{(p)}(x^\ast) - f_L^{(p)} \right] \leq \tau a \left[ f^{(p)}(x_0^{\gianni{(p)}}) - f_L^{(p)} \right],	\\
			\           \\
		\left[ f_s^{(p)}(x^\ast) - f_L^{(p)} \right] \leq \tau \left[ f^{(p)}(x_0^{\gianni{(p)}}) - f_L^{(p)} \right],
	\end{array} $$
	so that the definition of $\{Q_s(\tau)\}$ proves the result.
\end{proof}
\begin{lemma}
	\label{lem2}
	\gianni{Let $f^{(p)}(x_0^{\gianni{(p)}}) \geq f_L^{(p)}$, for any $p \in {\cal P}$}. Then, for any $s \in {\cal S}$ the function $Q_s(\tau): \ [0,1] \rightarrow [0,1]$ is nondecreasing.
\end{lemma}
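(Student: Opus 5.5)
The argument is a set-inclusion one: I will show that the family of sets whose cardinalities define $Q_s(\tau)$ in \eqref{eq:def_Qs} is nested and grows with $\tau$. Fix a solver $s \in {\cal S}$ and, for $\tau \in [0,1]$, write
$$ {\cal P}_s(\tau) = \left\{ p \in {\cal P} \ : \ f_s^{(p)}(x^\ast) - f_L^{(p)} \leq \tau \left[ f^{(p)}(x_0^{(p)}) - f_L^{(p)} \right] \right\}, $$
so that $Q_s(\tau) = {\rm size}\{{\cal P}_s(\tau)\}/|{\cal P}|$. Since $|{\cal P}|$ is a fixed positive constant, it suffices to prove that $0 \leq \tau_1 \leq \tau_2 \leq 1$ implies ${\cal P}_s(\tau_1) \subseteq {\cal P}_s(\tau_2)$.

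Take $p \in {\cal P}_s(\tau_1)$ and set $\Delta^{(p)} = f^{(p)}(x_0^{(p)}) - f_L^{(p)}$. The hypothesis of the lemma gives $\Delta^{(p)} \geq 0$, so the map $\tau \mapsto \tau \Delta^{(p)}$ is nondecreasing. This is the only place the assumption is used, and it is essential: if $\Delta^{(p)} < 0$ the right-hand side would decrease in $\tau$ and the inclusion could fail. Chaining the inequalities,
$$ f_s^{(p)}(x^\ast) - f_L^{(p)} \leq \tau_1 \Delta^{(p)} \leq \tau_2 \Delta^{(p)}, $$
so $p \in {\cal P}_s(\tau_2)$.

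Two bookkeeping points remain. Problems on which solver $s$ failed are excluded from ${\cal P}_s(\tau)$ for every $\tau$, so they do not affect the inclusion. The codomain $[0,1]$ holds because ${\cal P}_s(\tau) \subseteq {\cal P}$. Monotonicity of cardinality under inclusion then gives $Q_s(\tau_1) \leq Q_s(\tau_2)$. There is no real obstacle; the only step needing care is recording the sign condition on $\Delta^{(p)}$. The same argument covers strong quality profiles, since there the corresponding difference $\max_{s} f_s^{(p)}(x^\ast) - f_L^{(p)}$ is automatically nonnegative because $f_L^{(p)}$ is a minimum over the same values.
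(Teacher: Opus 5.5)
Your proof is correct and matches the paper's own argument: both show that the set of problems satisfying the defining inequality at $\tau_1$ is contained in the set at $\tau_2$, via the chain $\tau_1\bigl[f^{(p)}(x_0^{(p)})-f_L^{(p)}\bigr] \leq \tau_2\bigl[f^{(p)}(x_0^{(p)})-f_L^{(p)}\bigr]$. You also state explicitly where the hypothesis $f^{(p)}(x_0^{(p)}) \geq f_L^{(p)}$ enters, which the paper's proof uses without saying so.
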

\begin{proof}
	Given $\tau_1,\tau_2 \in [0,1]$, with $\tau_1 \leq \tau_2$, let $p \in {\cal P}$. Then, by the definition of $Q_s(\tau)$, if
	$$ f_s^{(p)}(x^\ast) - f_L^{(p)} \leq \tau_1 \left[ f^{(p)}(x_0^{\gianni{(p)}}) - f_L^{(p)} \right]  $$
	then we have also
	$$ f_s^{(p)}(x^\ast) - f_L^{(p)} \leq \tau_1 \left[ f^{(p)}(x_0^{\gianni{(p)}}) - f_L^{(p)} \right] \leq \tau_2 \left[ f^{(p)}(x_0^{\gianni{(p)}}) - f_L^{(p)} \right].  $$
	Thus, the subset of benchmark problems where the leftmost inequality (in the last relation)
	is fulfilled, is included in the set of benchmark problems where 
	  $$	f_s^{(p)}(x^\ast) - f_L^{(p)} \leq \tau_2 \left[ f^{(p)}(x_0^{\gianni{(p)}}) - f_L^{(p)} \right]. $$
\end{proof}
\begin{lemma}
\label{lem3}
Let $f^{(p)}(x_0^{\gianni{(p)}}) \neq f_L^{(p)}$ for any $p \in {\cal P}$, and let
	$$ \tau_M = \max_{s \in {\cal S}, \ p \in {\cal P}} \left\{ \frac{f_s^{(p)}(x^\ast) - f_L^{(p)}}{f^{(p)}(x_0^{\gianni{(p)}}) - f_L^{(p)}} \right\}. $$
	Then $Q_s(\tau) = 1$ for any $\tau \geq \tau_M$ and $s \in {\cal S}$.
\end{lemma}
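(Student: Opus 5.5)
The plan is to fix an arbitrary solver $s \in {\cal S}$ and an arbitrary problem $p \in {\cal P}$, and show that the inequality in \eqref{eq:def_Qs} holds for every $\tau \geq \tau_M$. Then $p$ belongs to the set counted in $Q_s(\tau)$. Since $p$ is arbitrary, the set has cardinality $|{\cal P}|$, so $Q_s(\tau)=1$.

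First I will pin down the sign of the denominator. With the adopted choice $f_L^{(p)} = \min_{s \in {\cal S}} \{ f^{(p)}_s(x^\ast)\}$ and relation \eqref{equ:better_than_x0}, we have $f^{(p)}(x_0^{(p)}) \geq f_s^{(p)}(x^\ast) \geq f_L^{(p)}$. The hypothesis $f^{(p)}(x_0^{(p)}) \neq f_L^{(p)}$ then gives $d_p := f^{(p)}(x_0^{(p)}) - f_L^{(p)} > 0$. This makes each ratio in the definition of $\tau_M$ well defined. It also makes every ratio nonnegative, and the ratios are finite in number because ${\cal S}$ and ${\cal P}$ are finite, so $\tau_M$ exists as a maximum. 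Moreover $\tau_M \leq 1$, consistent with the earlier observation that $Q_s(\tau)=1$ for $\tau\ge 1$.

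Next, by definition of the maximum,
$$ \frac{f_s^{(p)}(x^\ast) - f_L^{(p)}}{d_p} \leq \tau_M \leq \tau . $$
Multiplying by $d_p>0$ preserves the inequality and yields exactly $f_s^{(p)}(x^\ast) - f_L^{(p)} \leq \tau\, d_p$, which is the membership condition in \eqref{eq:def_Qs}. Equivalently, one could prove the case $\tau=\tau_M$ and then invoke Lemma~\ref{lem2}, whose hypothesis $f^{(p)}(x_0^{(p)}) \geq f_L^{(p)}$ was verified above, to extend to all $\tau \geq \tau_M$. The direct argument already covers every $\tau\ge\tau_M$, so Lemma~\ref{lem2} is not needed.

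The only delicate point is the sign of $d_p$. If $d_p$ could be negative, multiplying through by it would reverse the inequality and the argument would fail. So the main step is justifying $d_p>0$ from \eqref{equ:better_than_x0} and the choice of $f_L^{(p)}$ together with the nonequality hypothesis. A second caveat concerns solver failures, which the paper treats as excluded from the count. The statement implicitly assumes that $f_s^{(p)}(x^\ast)$ is defined for all pairs $(s,p)$, as the definition of $\tau_M$ requires, and I will state this assumption explicitly.
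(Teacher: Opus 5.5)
Your proposal is correct and follows essentially the same route as the paper, whose one-line proof simply invokes \eqref{equ:better_than_x0}, which together with $f_L^{(p)}=\min_{s\in{\cal S}} f_s^{(p)}(x^\ast)$ and the nonequality hypothesis gives $f^{(p)}(x_0^{(p)})-f_L^{(p)}>0$, and then the definition of $Q_s(\tau)$. You have just written those steps out explicitly. Your explicit assumption that no solver fails, so that $f_s^{(p)}(x^\ast)$ is defined for every pair $(s,p)$, is a sensible clarification that the paper leaves implicit.
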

\begin{proof}
	The proof straightforwardly follows {\color{black}from \eqref{equ:better_than_x0} and} the definition of $Q_s(\tau)$.
\end{proof}
\begin{observation}
	\label{observ:5}{\em
		The functions $\{Q_s(\tau)\}$, for $\tau \in [0,1]$, may be difficult to compare in case the number of solvers increases, in particular when $\tau \in \{0,1\}$ (\ie when $\tau$ assumes the values at the extremes of its interval). The last fact may be due to close tracks associated to the functions $\{Q_{s}(\tau)\}$ when $\tau$ belongs to $\{0,1\}$. This suggests that a more sophisticated manipulation needs to be introduced, in order to compress or expand portions of the plot of the functions $\{Q_s(\tau)\}$, {\color{black}both along the abscissa and the ordinate axes}.
	}
\end{observation}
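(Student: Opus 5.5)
Since the statement above is an observation rather than a sharp claim, my plan is to make it precise enough to be justified, and then to show that a monotone rescaling of both axes resolves the difficulty without changing the information carried by the profiles. First I would rewrite each $Q_s$ as an empirical distribution function. Setting
$$ \rho_{p,s} = \frac{f_s^{(p)}(x^\ast) - f_L^{(p)}}{f^{(p)}(x_0^{(p)}) - f_L^{(p)}} \in [0,1], $$
where the inclusion in $[0,1]$ follows from \eqref{equ:better_than_x0} and from $f_L^{(p)}=\min_s f_s^{(p)}(x^\ast)$, the definition \eqref{eq:def_Qs} gives
$$ Q_s(\tau)=\frac{1}{|{\cal P}|}\,{\rm size}\{p\in{\cal P}:\rho_{p,s}\le\tau\}. $$
Hence each $Q_s$ is a right-continuous, nondecreasing step function (Lemma~\ref{lem2}), with jumps located exactly at the values $\rho_{p,s}$.

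Next I would explain the crowding at the two ends of $[0,1]$. Near $\tau=1$: by Lemma~\ref{lem3}, every track equals $1$ on $[\tau_M,1]$. So whenever $\tau_M$ is well below $1$, all the curves coincide on that interval and carry no discriminating information there. Near $\tau=0$: when the solvers return close objective values, which is the typical situation for accurate methods, the nonzero ratios $\rho_{p,s}$ are very small, for instance of order $10^{-k}$. In that case every jump of every $Q_s$ above its value $Q_s(0)$ falls inside an interval of length $O(10^{-k})$. On a linear abscissa these jumps are visually indistinguishable from vertical segments at $\tau=0$. Symmetrically, when the ordinates $Q_s(\tau)$ are all close to $0$ or all close to $1$, the tracks are squeezed together vertically. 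I would illustrate this with a small two-solver, two-problem example where $\rho_{p,s}\in\{0,10^{-8}\}$.

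Finally, to support the suggested remedy, I would note that applying maps $\tau\mapsto\tau^{1/r_1}$ and $Q\mapsto Q^{1/r_2}$ with $r_1,r_2>0$, both increasing bijections of $[0,1]$, preserves three things: the pointwise ordering of the tracks, their monotonicity, and the locations of $Q_s(0)$ and $Q_s(1)$. Meanwhile, taking $r_1>1$ or $r_1<1$ stretches the region near $0$ or near $1$, respectively. I expect the main obstacle to be that \emph{difficult to compare} is not a mathematical notion. The best available substitute is the quantitative statement that all discriminating jumps lie in $[0,\max_{p,s:\rho_{p,s}>0}\rho_{p,s}]\cup[\min_{p,s:\rho_{p,s}>0}\rho_{p,s},\tau_M]$, together with the observation that these intervals can be arbitrarily short.
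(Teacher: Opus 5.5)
Your remedy half follows the paper's own route. After this observation the paper introduces the power rescalings in \eqref{eq:def_Qs_b} and explains the expansion/compression effect of $r_1,r_2\neq 1$. It then proves in Lemma~\ref{lem4} exactly your remark that increasing bijections of both axes preserve the pointwise ranking of the tracks. The paper gives no argument for the crowding itself beyond the numerical figures, so your empirical-distribution diagnosis is an addition. Its $\tau=0$ half is sound: nonzero $\rho_{p,s}$ of order $10^{-k}$ push every jump against the ordinate axis, which is what $r_1>1$ and the semilog scale are designed for.

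The $\tau=1$ half, however, does not support the claim. Lemma~\ref{lem3} says the tracks \emph{coincide} on $[\tau_M,1]$. Identical tracks are not hard to compare, they are simply uninformative, and a small $\tau_M$ is just crowding near $0$ again. The observation concerns tracks that are distinct but close near $\tau=1$, and your model removes the main source of that. By requiring $\rho_{p,s}\in[0,1]$ and relying on Lemma~\ref{lem3} (hence on \eqref{equ:better_than_x0}), you exclude failures. The paper keeps failures on purpose by never counting a failed problem, so that $Q_s(1)<1$ by small, solver-dependent amounts. This is precisely what the paper later says becomes visible only after expanding the plot near $\tau^{r_1}=1$.

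To close the gap, set $\rho_{p,s}=+\infty$ for a failure, so that $Q_s$ becomes a defective empirical distribution function. Near $\tau=1$ each track then sits at height $1-\varepsilon_s(\tau)$, with $\varepsilon_s(\tau)=\frac{1}{|{\cal P}|}{\rm size}\{p\in{\cal P}:\rho_{p,s}>\tau\}$. This is a small number for every solver, since there are few failures and few problems with almost no progress from $x_0^{(p)}$. In your convention $Q\mapsto Q^{1/r_2}$, jumps located near $\tau=1$ are separated by $r_1<1$, and the small vertical gaps by $r_2<1$.

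Your closing quantitative statement must also be revised. Since $\max_{p,s:\rho_{p,s}>0}\rho_{p,s}=\tau_M$, the union you write is simply $[0,\tau_M]$ and says nothing about the end $\tau=1$. Finally, $\rho_{p,s}$ is undefined when $f^{(p)}(x_0^{(p)})=f_L^{(p)}$, which is why Lemma~\ref{lem3} excludes that case; such problems should be assigned $\rho_{p,s}=0$.
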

The last observation suggests that to compress/expand portions of a quality profile, in order to better sort and rank the different tracks associated with codes, we can modify \eqref{eq:def_Qs} into
\begin{equation}
	\left[Q_s(\tau)\right]^{r_2} = \frac{1}{|{\cal P}|} {\rm size} \left\{ p \in {\cal P} \ : \ f_s^{(p)}(x^\ast) - f_L^{(p)}  \leq \tau^{r_1} \left[ f^{(p)} ( x_0^{(p)} ) - f_L^{(p)} \right] \right\},        \label{eq:def_Qs_b}
\end{equation}
with $\tau \in [0,1]$, where $r_1$ and $r_2$ are positive real numbers. \rev{It results that:}
\begin{itemize}
	\item for $r_1 \in \{1,2,3,4, \ldots\}$ we obviously have again $0 \leq \tau^{r_1} \leq 1$. Moreover, for a given solver $s \in S$, when $r_1$ increases, the portion of the quality profile corresponding to {\em small} values of the abscissa (say equivalently $\tau$ close to zero) will be {\em expanded}. Conversely, when $r_1$ increases, the portion of the quality profile corresponding to {\em large} values of the abscissa (say equivalently $\tau$ close to one) will be {\em compressed}. As an example, note that when $r_1=1$ the mid--value of the abscissa axis is $1/2$, while for $r_1=5$ the mid--value of the abscissa axis becomes $(1/2)^5 = 1/32$, showing that the portion of the plot close to the origin has been expanded along the abscissa axis. \\ 
	\item for $r_1 \in \{1,1/2,1/3,1/4, \ldots\}$ we obviously have again $0 \leq \tau^{r_1} \leq 1$. Moreover, now for smaller values of $r_1$ the portion of the quality profile corresponding to {\em large} values of the abscissa (say equivalently $\tau$ close to one) will be {\em expanded}, and for smaller values of $r_1$ the portion of the quality profile corresponding to {\em small} values of the abscissa (say equivalently $\tau$ close to zero) will be {\em compressed};
	\item a dynamics similar to the one detailed in the previous items is experienced by increasing $r_2 \in \{1,2,3,4, \ldots\}$ or decreasing $r_2 \in \{1,1/2,1/3,1/4, \ldots\}$. As an example, when $r_2=1$ the mid--value of the ordinate axis is $1/2$ while for $r_2=6$ the mid--value of the ordinate axis is $(1/2)^6 = 1/64$, \ie the portion of the plot close to the origin has been expanded along the ordinate axis. \\
\end{itemize}

A picture reporting a summary of the contents in the above items is given in Figure~\ref{fig:Expand-Compress}. 
\begin{figure}[tbph]
	\centering
	\hspace*{-1.1truecm}
	\includegraphics[width=1.2\linewidth]{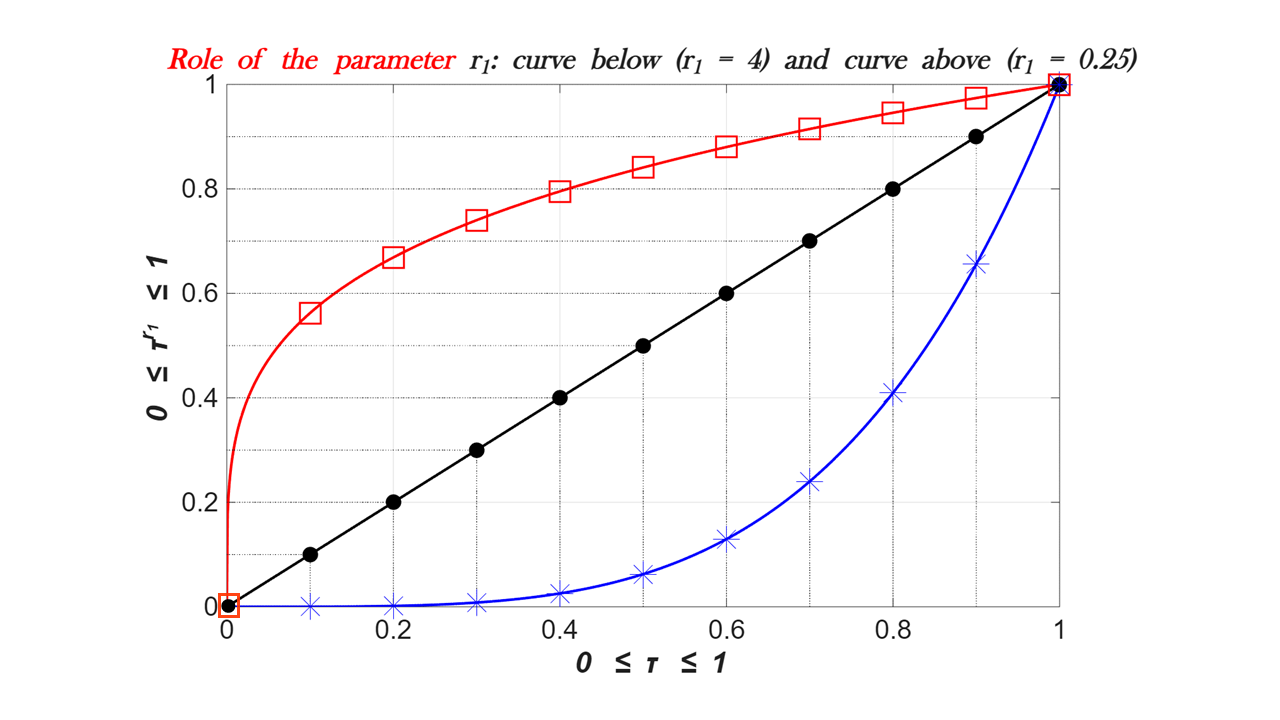}
	\caption{The role of the parameter $r_1$ for the quality profiles in \eqref{eq:def_Qs_b}. The values of $r_1$ larger than 1 imply an {\em expansion} of the plot nearby the point $(0,0)$ (\ie the ordinates of the blue star points -- on the curve $f(\tau)=\tau^{r_1}$ -- are more compressed nearby the origin and more expanded near the point $(1,1)$). The values of $r_1$ smaller than 1 imply a {\em compression} near the point $(1,1)$ (\ie the ordinates of the red square points -- on the curve $f(\tau)=\tau^{r_1}$ -- are more expanded nearby the origin and more compressed near the point $(1,1)$).}
	\label{fig:Expand-Compress}
\end{figure}
Here the idea of how to implement the compression/expansion of the abscissa axis, according to the parameter $r_1$, is emphasized. In particular, the last figure shows how 10 sample values of the precision $\tau$ are generated in our practical implementation of quality profiles, depending on the parameter $r_1$:
\begin{enumerate}
	\item if $r_1=1$ then {\em equally spaced samples} of $0 \leq \tau^{r_1}\leq 1$ (corresponding to ordinates of the black dots on the segment joining $(0,0)$ and $(1,1)$) are generated; 
	\item if $r_1 > 1$ then {\em NOT equally spaced samples} of $0 \leq \tau^{r_1} \leq 1$ (corresponding to the ordinates of the blue stars on the lower curve) are generated; 
	\item if $r_1 < 1$ then {\em NOT equally spaced samples} of $0 \leq \tau^{r_1} \leq 1$ (corresponding to the ordinates of the red squares on the upper curve) are generated.
\end{enumerate}
Observe that according to the first item (where $r_1=1$), in our practical {\tt MATLAB} \cite{MATLAB} implementation for plotting quality profiles (see https://github.com/fasano-g/Quality-Profiles.git) we generated $1000$ sample values for $\tau^{r_1}$. Conversely, in the last two items (where $r_1 \neq 1$) we recommend increasing the number of sample values for $\tau^{r_1}$ (from $1000$ to $\lfloor1000r_1\rfloor$ and $\lfloor 1000 / r_1 \rfloor$, respectively), so that the different density of sample values for $\tau^{r_1}$ in the interval $[0,1]$ is partially compensated (see again the ordinates of the points in Figure~\ref{fig:Expand-Compress}).    

\par\bigskip
\revnewgianni{
\begin{lemma}
		\label{lem4}
		Given the test set ${\cal P}$ and the set of solvers ${\cal S}=\{s_1 , \ldots , s_r\}$, the relative ranking among the nonnegative quantities
		$\left[Q_{s_1}(\tau)\right]^{r_2} , \ldots , \left[Q_{s_r}(\tau)\right]^{r_2}$, when $\tau\in [0, 1]$, is invariant with respect to the choice of the positive parameters $r_1$ and $r_2$.
\end{lemma}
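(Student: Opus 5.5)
The plan is to read \eqref{eq:def_Qs_b} as the composition of the original profile \eqref{eq:def_Qs} with two strictly monotone reparametrizations, one on each axis. Denote by $Q_s^{(1)}(\cdot)$ the unscaled profile in \eqref{eq:def_Qs}, and by $Q_s^{(r_1,r_2)}(\cdot)$ the scaled one, i.e.\ the function defined by \eqref{eq:def_Qs_b}. Comparing the right-hand sides of \eqref{eq:def_Qs} and \eqref{eq:def_Qs_b} shows that, for every $s\in{\cal S}$ and every $\tau\in[0,1]$,
$$ \left[Q_s^{(r_1,r_2)}(\tau)\right]^{r_2} = Q_s^{(1)}\left(\tau^{r_1}\right). $$
The set counted by size$\{\cdot\}$ depends on $\tau$ only through $\tau^{r_1}$, and it does not depend on $s$ in any way that involves $r_1$ or $r_2$. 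This identity is the only structural fact the argument needs.

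Next fix $\tau\in[0,1]$ and two solvers $s_i,s_j\in{\cal S}$. Set $\sigma=\tau^{r_1}$. Since $r_1>0$, the map $\tau\mapsto\tau^{r_1}$ is a strictly increasing bijection of $[0,1]$ onto itself, so $\sigma$ is a well defined point of $[0,1]$, common to all solvers. Then
$$ \left[Q_{s_i}(\tau)\right]^{r_2} \gtreqless \left[Q_{s_j}(\tau)\right]^{r_2} \quad\Longleftrightarrow\quad Q_{s_i}^{(1)}(\sigma) \gtreqless Q_{s_j}^{(1)}(\sigma). $$
The right-hand comparison does not involve $r_1$ or $r_2$ at all. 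Moreover, since $r_2>0$, the map $t\mapsto t^{1/r_2}$ is strictly increasing on the nonnegative reals, so the same ordering (including ties) holds for the unpowered values $Q_{s_i}(\tau)$ and $Q_{s_j}(\tau)$. Hence, at corresponding abscissae, the ordering of the $r$ tracks (the full permutation ranking $s_1,\ldots,s_r$, with ties) coincides with the ordering obtained for $r_1=r_2=1$. The bijectivity of $\tau\mapsto\tau^{r_1}$ also shows that as $\tau$ sweeps $[0,1]$ every abscissa $\sigma\in[0,1]$ of the original profile is visited exactly once. Consequently the whole sequence of rankings, and the points where two tracks cross or coincide, are simply relabeled along the abscissa and never altered.

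The main obstacle is interpretive rather than technical: the statement must be read as invariance of the ranking at \emph{corresponding} abscissae, $\tau$ versus $\tau^{r_1}$, not at the same numerical value of $\tau$. I would make this explicit at the outset. I would also remark that ties are preserved, which follows from strict monotonicity, and that nonnegativity of the $Q_s$ guarantees that $t\mapsto t^{r_2}$ is well defined and strictly increasing. No earlier lemma is required beyond the definitions; Lemma~\ref{lem2} is not needed.
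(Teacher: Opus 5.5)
Your proof is correct and follows essentially the same route as the paper's. Invariance in $r_2$ comes from the strict monotonicity of $t\mapsto t^{r_2}$ on the nonnegative reals, and invariance in $r_1$ comes from $\tau\mapsto\tau^{r_1}$ being an increasing bijection of $[0,1]$, so rankings are preserved at corresponding abscissae ($\tau$ versus $\tau^{r_1}$). Your pointwise identity $[Q_s^{(r_1,r_2)}(\tau)]^{r_2}=Q_s^{(1)}(\tau^{r_1})$ is a tidier version of the paper's argument over subintervals on which the ranking of two solvers is constant.
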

\begin{proof}
Without loss of generality let us focus on the solvers $1$ and $2$, and let us divide the abscissa interval $[0,1]$ into subsets\footnote{Each subset may possibly be a singleton.}, such that in each subset the ranking between the solvers $1$ and $2$ does not change. Let us consider for instance the subset corresponding to $0 \leq \tau_\ell \leq \tau^{r_1} \leq \tau_u \leq 1$, where for simplicity we assume $Q_1(\tau) \geq Q_2(\tau)$. This also implies $\left[Q_1(\tau)\right]^{r_2} \geq  \left[Q_2(\tau)\right]^{r_2}$, for any $r_2 \geq 0$. This proves the invariance of the ranking with respect to the choice of the parameter $r_2$, regardless of the choice of the pair of solvers and of the chosen subset.
\\
Observe that when $\tau_\ell \leq \tau^{r_1} \leq \tau_u$ then equivalently $\tau_\ell^{1/r_1} \leq \tau \leq \tau_u^{1/r_1}$. Hence, if the solvers $1$ and $2$ satisfy $Q_1(\tau) \geq Q_2(\tau)$, for  $\tau_\ell \leq \tau^{r_1} \leq \tau_u$, then we also have $Q_1(\tau^{1/r_1}) \geq Q_2(\tau^{1/r_1})$, for any value of $\tau$ in the interval $\tau_\ell^{1/r_1} \leq \tau \leq \tau_u^{1/r_1}$. This shows that the ranking between  $Q_1(\tau)$ and $Q_2(\tau)$, in the interval $\tau_\ell \leq \tau^{r_1} \leq \tau_u$, is maintained also by $Q_1(\tau^{1/r_1})$ and $Q_2(\tau^{1/r_1})$ in the interval $\tau_\ell^{1/r_1} \leq \tau \leq \tau_u^{1/r_1}$.
\\
The arbitrary choice of the pair of solvers and of the chosen subset proves that the relative ranking among the solvers is preserved after changing $r_1$, too. 
\end{proof}
}

\subsection{Relative and absolute accuracy via quality profiles}
\label{sec:rel_abs_quality}
This section is devoted to analyzing to what extent quality profiles outcomes may address both the {\em relative} and the {\em absolute} precision of solvers. In particular, \revnewgianni{recalling that the proposal in the current paper is inherently test set--dependent,} the last terminology refers to the following distinction:
\begin{itemize}
    \item {\em relative precision} of a solver refers to the comparative performance of a given solver with respect to other solvers, on a given test set\gianni{;}
    \item {\em absolute precision} of a solver intends to monitor the numerical accuracy of a solver, as expressed in terms of number of correct digits in the solution it provides.
\end{itemize}
It is straightforward to see that, on a given test set, performance profiles \gianni{\cite{dolan.more}} typically yield comparative (\ie relative) results in terms of  solvers' performance, so that no specific information is provided as regards the \gianni{absolute precision}. Conversely, data profiles \gianni{\cite{more.wild} on one hand provide outcomes on the relative precision of solvers (by selecting appropriate precision levels), and on the other hand are also} focused on exploiting the accuracy of solvers, for a required precision level. Hence, we can summarize that in case a solver is omitted from the comparison, in the resulting performance profile each of plots associated to the other solvers is likely influenced. On the contrary, omitting a solver from a data profile implies that the plots relative to the other solvers will remain unchanged.
\\
We show here that quality profiles indeed yield, to large extent, information on both the relative and the absolute precision of solvers, following the guidelines indicated in \gianni{\cite{beiranvand2017best}}\revgianni{, too}. Indeed, considering for any $p \in {\cal P}$ and for $s \in {\cal S}$ the assumptions
    $$ f^{(p)} (x_0^{(p)}) \geq f_s^{(p)} (x^{\ast}) \geq  f_L^{(p)} $$
(see also Lemma \ref{lem3}, along with the second and the third choice for $f_L^{(p)}$ at page \pageref{page_4}), we have
    $$ 1 \geq \tau^{r_1} \geq \frac{f_s^{(p)} (x^{\ast}) - f_L^{(p)}}{f^{(p)} (x_0^{(p)}) - f_L^{(p)}} \geq  0. $$
Hence, using the base $10$ logarithm we obtain
    $$ 0 \leq -r_1 \log_{10}(\tau) \leq - \log_{10} \left[\frac{f_s^{(p)} (x^{\ast}) - f_L^{(p)}}{f^{(p)} (x_0^{(p)}) - f_L^{(p)}} \right]. $$
\revgianni{This last relation is subject to the following  couple of relevant interpretations, in case $\tau=0.1$ and $r_1 \in \{1,2, \ldots\}$:
\begin{itemize}
	\item when $f_L^{(p)}=0$, then $r_1$ may be associated with the {\em minimum number of exact digits} that the solver $s$ has correctly computed, when minimizing the initial objective function value $f^{(p)} (x_0^{(p)})$ of the test problem $p$;
	\item when $f_L^{(p)} \neq 0$, then $r_1$ may be associated with the {\em minimum number of exact digits} that the solver $s$ has correctly computed when decreasing the quantity ${f^{(p)} (x_0^{(p)}) - f_L^{(p)}}$ to eventually obtain the quantity ${f_s^{(p)} (x^{\ast}) - f_L^{(p)}}$.
\end{itemize}
}
\revgianni{
Hence, let us consider now the entire test set, along with a vertical line by any of the abscissa values $0.1,0.1^{2}, \ldots, 0.1^{r_1}$. These lines  intersect the quality profile of a given solver $s \in {\cal S}$ at values indicating {\em the percentage of test problems where the solver $s$ is able to successfully decrease the quantity ${f_s^{(p)} (x^{\ast}) - f_L^{(p)}}$, $p \in {\cal P}$, by at least $1,2 \ldots, r_1$ orders of magnitude}. This observation allows us to appropriately select the value of $r_1$, that should be chosen so that the vertical lines corresponding to the values $0.1,0.1^{2}, \ldots, 0.1^{r_1}$ are clearly distinguishable. As a direct consequence, {\em the use of semilog scaling plot} (\ie setting log scaling on the abscissa axis and linear scaling on the ordinate axis) {\em in quality profiles is definitely recommended}. 
}

\revgianni{
Furthermore, the ordinate of the intersection point between the $s$-th quality profile and the vertical line $\tau = 0.1^q$, with $1 \leq q \leq r_1$, represents the ordinate of the (right) horizontal asymptote to the plot of the $s$-th solver, in a data profile,  where the chosen precision level is exactly $0.1^q$.	
}
\revgianni{
To better grasp the different information summarized by quality profiles, with respect to data profiles, we can observe\footnote{We warn the reader that with a little abuse notation, but for the sake of simplicity, we preferred to use the symbol $[Q_s(\tau)]^{r_2}$ on the ordinate axis in Figures \ref{fig:QP-prototype} and \ref{fig:QP-prototype-bis}, rather than $[Q_s(\tau^{r_1})]^{r_2}$. A similar choice was adopted along the entire paper for the remaining figures.} Figure~\ref{fig:QP-prototype} and Figure~\ref{fig:QP-prototype-bis}, where two solvers are easily compared over a scale of 30 orders of magnitude on the objective function value. By suitably choosing values of $r_1$ we can easily zoom in a (right) neighborhood of the value $\tau=0$. Indeed, here the combined use of the semilog scale and the value of $r_1$ yield a clear idea of both the relative and the absolute decrease of the objective function value, obtained through both the solvers.
} 
\begin{figure}[tbph]
	\centering
	\includegraphics[width=0.9\textwidth]{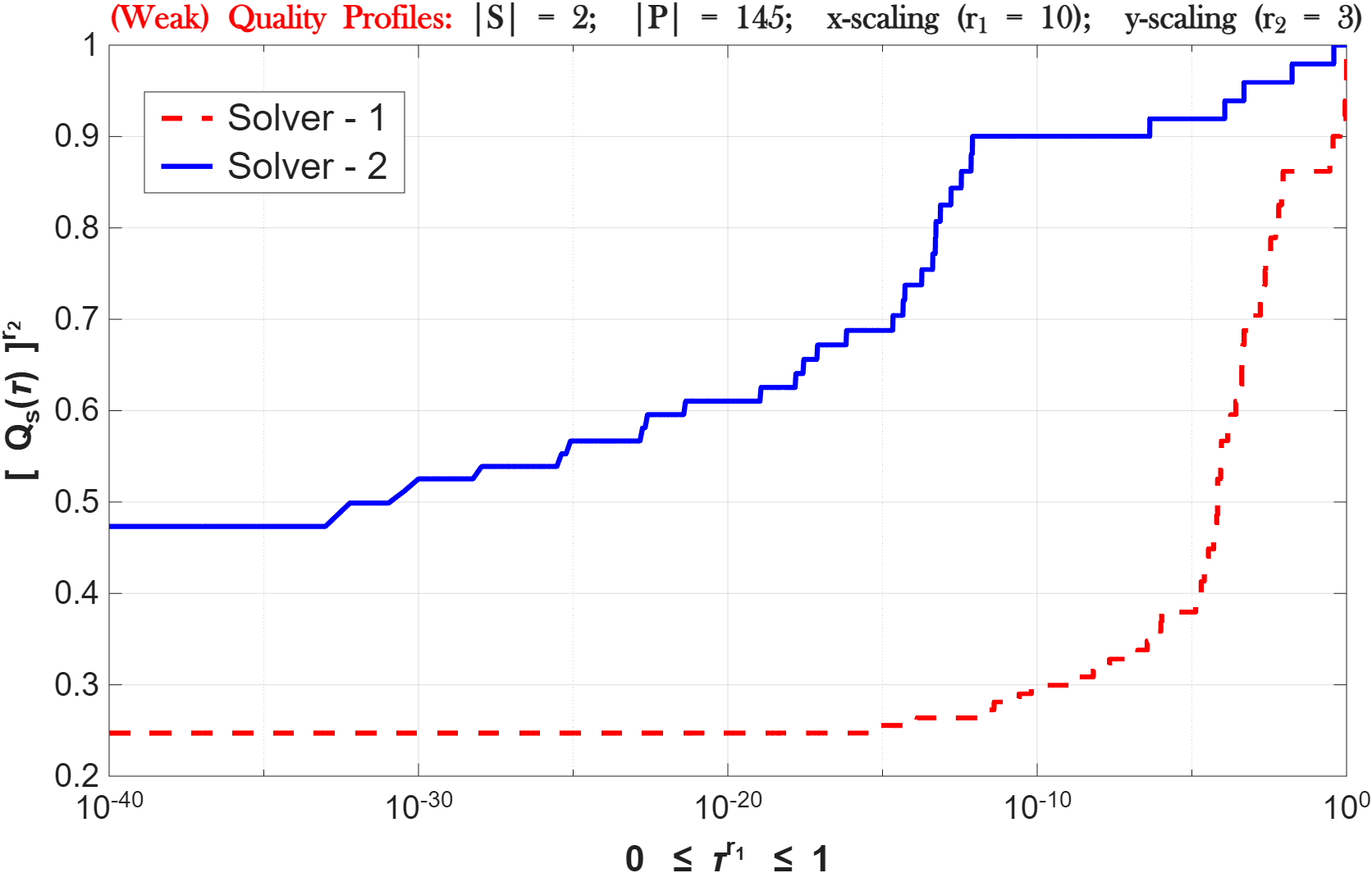}
	\caption{Quality profiles obtained setting $r_1=10$ and $r_2=3$ in \eqref{eq:def_Qs_b}.}
	\label{fig:QP-prototype}
\end{figure}
\begin{figure}[tbph]
	\centering
	\includegraphics[width=0.9\textwidth]{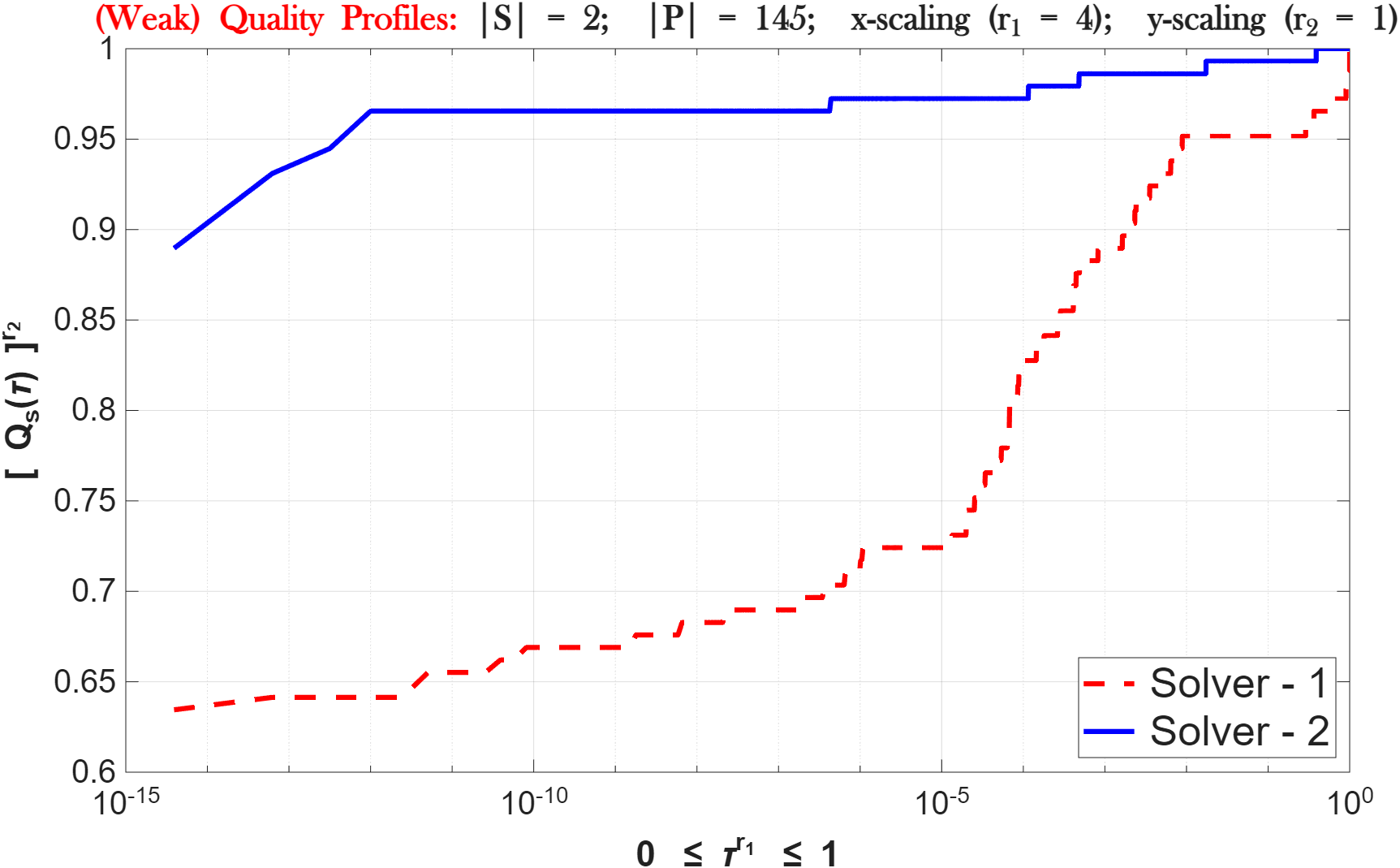}
	\caption{Quality profiles obtained setting $r_1=4$ and $r_2=1$ in \eqref{eq:def_Qs_b}.}
	\label{fig:QP-prototype-bis}
\end{figure}
In particular, from Figure \ref{fig:QP-prototype} we can immediately realize that the Solver 1 is able to beat (by at least 10 orders of magnitude) the initial value $f^{(p)} ( x_0^{(p)} ) - f_L^{(p)}$ in about 30\% of the problems in the test set. Conversely, the Solver 2 is able to decrease the same quantity by about 12 orders of magnitude, for more than 90\% of the problems in the test set. In any case, since $\left[Q_2(\tau)\right]^{r_2} \geq \left[Q_1(\tau)\right]^{r_2}$, for any $\tau \in [0,1]$, then the second solver is uniformly (\ie for any $\tau \in [0,1]$) preferable. Also observe that neither of the solvers quality profile includes the point $(0,0)$ (\ie in each of the solvers $s \in {\cal S}$ we have $f_s^{(p)}(x^*) = f_L^{(p)}$, for at least one problem $p \in {\cal P}$), and only $\left[Q_2(\tau)\right]^{r_2}$ includes the point $(1,1)$ (\ie the first solver has a failure for at least one problem $p \in {\cal P}$). Moreover, the use of the semilog scale on the abscissa axis and the choice of the positive parameter $r_1$ are independent issues, and they may both contribute to enhance the final readability of the outcomes.
\\
We also remark that setting $r_2 \neq 1$ for a quality profile has a less relevant role with respect to carefully tuning the parameter $r_1$. Indeed, $r_2$ can be regarded as a scaling parameter in order to better distinguish and rank the tracks associated to different solvers, in case of very similar (and possibly messy) drawings within a restricted area of the picture. We can observe the impact of setting $r_2 \neq 1$ in Figure \ref{fig:QP-prototype}, where the ordinates of the points in both the quality profiles are scaled through $r_2$. This fact may be possibly useful in some circumstances, but it also implies the next misleading performance for the solvers. Comparing Figure \ref{fig:QP-prototype} (where $r_2=3$) and Figure \ref{fig:QP-prototype-bis} (where $r_2=1$), and  selecting for instance the value $10^{-10}$ on the abscissa axis, we can observe that the reported percentages of solved problems associated to each solver \revnewgianni{differ} from one figure to the other. Hence, the user should pay attention about this possible drawback when setting $r_2 \neq 1$ and drawing conclusions. \revnewgianni{As a general indication on the choice of the parameters $r_1$ and $r_2$, we advise to select $r_1=r_2=1$ as default values. Then, in case the resulting drawings associated with the solvers are hardly distinguishable, then the user may consider to change $r_1$ (to compress/expand the drawings nearby a value on the abscissa axis), and $r_2$ (to compress/expand the drawings nearby a value on the ordinate axis).}

\revgianni{A similar narrative is suggested also for the proposal in \cite{beiranvand2017best} (where namely the {\em accuracy profiles} are described), that provides an analogous analysis but requires the assessment of an exogenous parameter $M$, in order to plot the profiles. Moreover, the use of the scaling parameter $r_1$ in \eqref{eq:def_Qs_b} is not immediately extendable to the analysis in \cite{beiranvand2017best}. Anyway, we can conclude that both the proposal in the current paper and the one in \cite{beiranvand2017best} give benchmarking information in terms of relative and absolute precision. However, the proposal in the current paper does not require {\em any pre--processing operation} (\eg clustering too close values provided by solvers): indeed, the joint use of $r_1$ and semilog scale allows to clearly discriminate the accuracy among solvers, up to any desired accuracy level (say number of decimal digits). 
}
\\
For the convenience of the reader, the next lemma summarizes a few relevant properties of quality profiles (similarly to data profiles) in terms of relative precision.
\\

\begin{lemma}
\label{lem:robustness_ratios}
Let us consider the set ${\cal P}$ of test problems. Then, 
\begin{itemize}
	\item in case the quantity $f_L^{(p)}$ in \eqref{eq:def_Qs_b} is computed independently of the solvers, then for $s,t \in {\cal S}$ the ratios $Q_s(\tau)$, $Q_t(\tau)$, for any $\tau \in [0,1]$, are independent plots, \ie each of them is not influenced by the others;
	\item in case the quantity $f_L^{(p)}$ in \eqref{eq:def_Qs_b} is defined as $f_L^{(p)}= \min_{s \in \cal S} \{f_s^{(p)}(x^*)\}$ (weak quality profiles), or $f_L^{(p)}= \max_{s \in \cal S} \{f_s^{(p)}(x^*)\}$ (strong quality profiles), then for $t \in {\cal S}$ the ratio $Q_t(\tau)$, for any $\tau \in [0,1]$, will uniquely depend on the solvers in ${\cal S}$ used to assess the quantity $f_L^{(p)}$, with $p \in {\cal P}$.
\end{itemize}
\end{lemma}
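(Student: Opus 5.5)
The plan is to unpack definition \eqref{eq:def_Qs_b} and see exactly which data enter the membership test for each problem. For a fixed solver $t \in {\cal S}$, fixed $\tau \in [0,1]$ and fixed $r_1,r_2>0$, the quantity $[Q_t(\tau)]^{r_2}$ is $|{\cal P}|^{-1}$ times the number of $p \in {\cal P}$ that satisfy
$$ f_t^{(p)}(x^\ast) - f_L^{(p)} \leq \tau^{r_1}\left[ f^{(p)}(x_0^{(p)}) - f_L^{(p)}\right]. $$
Whether a given $p$ satisfies this inequality depends only on three numbers: $f_t^{(p)}(x^\ast)$, which is produced by solver $t$ alone; $f^{(p)}(x_0^{(p)})$, which is fixed by the test problem and the common starting point; and $f_L^{(p)}$. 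Since $|{\cal P}|$ does not depend on the solvers, and since $Q_t(\tau)\ge 0$ is recovered uniquely as the $r_2$-th root, the whole argument reduces to tracking how $f_L^{(p)}$ depends on ${\cal S}$.

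For the first item, assume $f_L^{(p)}$ is computed independently of ${\cal S}$, for instance by the external reference algorithm of choice 2 at page \pageref{page_4}. Then every quantity in the inequality above is a function of $t$, $p$ and the problem data only. I will formalize this by a replacement argument. Modify or remove any solver $s \neq t$, or add a new one. The set of $p$ satisfying the inequality for $t$ stays the same, so $Q_t(\tau)$ is unchanged for every $\tau\in[0,1]$. By symmetry the same holds for $Q_s$, which is what ``independent plots'' means.

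For the second item, $f_L^{(p)}=\min_{s\in{\cal S}} f_s^{(p)}(x^\ast)$ (or the max in the strong variant, where the max also replaces $f^{(p)}(x_0^{(p)})$ in the bracket). The function $Q_t(\tau)$ is then the composition of the counting map with the vector of values $(f_s^{(p)}(x^\ast))_{s\in{\cal S},\,p\in{\cal P}}$. It therefore depends on the other solvers only through these extremal values, that is, only through the solvers in ${\cal S}$ that determine $f_L^{(p)}$. The key observation is that changing a solver $s\neq t$ affects $Q_t$ only if it changes $f_L^{(p)}$ for some $p$. To show that this dependence is genuine rather than vacuous, I would give a two-solver, one-problem example: adding a solver whose value lies below $f_t^{(p)}(x^\ast)$ turns $Q_t(0)$ from $1$ to $0$.

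The main obstacle is not technical but interpretive. ``Independent'' and ``uniquely depend'' have to be read in the replacement/functional-dependence sense above. Beyond that, the proof is a direct inspection of \eqref{eq:def_Qs_b}, with the $r_1,r_2$ rescaling handled as in Lemma~\ref{lem4}.
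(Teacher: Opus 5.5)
Your proposal is correct and follows essentially the same route as the paper, which gives no separate proof and treats the lemma as an immediate consequence of \eqref{eq:def_Qs_b}: the membership test for solver $t$ involves only $f_t^{(p)}(x^\ast)$, $f^{(p)}(x_0^{(p)})$ and $f_L^{(p)}$ (plus $\max_{s\in{\cal S}} f_s^{(p)}(x^\ast)$ in the strong variant), so the other solvers can affect $Q_t$ only through these reference quantities. Your replacement-based reading of ``independent'' and your one-problem example showing that the dependence in the second item is genuine go slightly beyond what the paper actually writes down.
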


{The two cases in Lemma~\ref{lem:robustness_ratios} bear directly on the paradox-freeness discussion in the Introduction: an algorithm-independent value for $f_L^{(p)}$ avoids by construction the set-dependence mechanism behind the paradoxes in \citep{liu2020paradoxes}, whereas the default weak/strong formulations do not offer this guarantee against the sufficient conditions derived in \citep{yan2022paradox}, falling into the survival of the nonfittest paradox. A full verification of this property is left for future work.}

\par\medskip
\begin{observation}
	{\em 
\revgianni{First we observe the substantial complementary nature of {\em quality profiles} information with respect to {\em data profiles}. Indeed, in each quality profile we essentially disregard the maximum computational budget allowed for the $s$--th solver (\eg the maximum number of function evaluations allowed, the maximum number of iterations, etc.), which is a typical setting of most of the available solvers. On the contrary, we monitor the performance (\ie the percentage of successfully solved problems in the test set) for the $s$--th solver as by \eqref{eq:def_Qs_b}, and for any given precision value (\ie $0 \leq \tau \leq 1$). Conversely, in a data profile the required precision $\tau$ for all the compared solvers is assigned, so that for any solver, the ratio of successfully solved problems in the test set is reported vs. the number of simplex gradients (say equivalently the multiple of $n_p+1$ function evaluations, being $n_p$ the number of variables for the test problem $p \in {\cal P}$).}}
\end{observation}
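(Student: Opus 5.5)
The plan is to make the Observation precise by writing both tools as sections of one underlying object, and then showing that neither section determines the other. For a solver $s$, a problem $p$ and a budget $\alpha\ge 0$ (measured in simplex gradients, i.e.\ $\alpha(n_p+1)$ function evaluations), let $f_{s,\alpha}^{(p)}$ be the best value found by $s$ within that budget. Let $\bar\alpha_s$ be the maximum budget allowed to $s$, so that $f_{s,\bar\alpha_s}^{(p)}=f_s^{(p)}(x^\ast)$. Using the same reference value $f_L^{(p)}$ and the convergence test of \cite{more.wild}, I would define
\[
D_s(\alpha;\tau)=\frac{1}{|{\cal P}|}\,{\rm size}\left\{p\in{\cal P}\ :\ f_{s,\alpha}^{(p)}-f_L^{(p)}\le \tau^{r_1}\left[f^{(p)}(x_0^{(p)})-f_L^{(p)}\right]\right\}.
\]
A data profile is then the map $\alpha\mapsto D_s(\alpha;\tau)$ for one fixed $\tau$. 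A quality profile, by \eqref{eq:def_Qs_b}, is the map $\tau\mapsto [D_s(\bar\alpha_s;\tau)]^{r_2}$, that is, the same quantity with the budget frozen at its final value.

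\textbf{Step 1 (link between the two tools).} Since $f_{s,\alpha}^{(p)}$ is nonincreasing in $\alpha$, the argument of Lemma~\ref{lem2} applied in the variable $\alpha$ shows that $D_s(\cdot;\tau)$ is nondecreasing. Its right horizontal asymptote, equivalently its value at $\alpha=\bar\alpha_s$, is therefore $[Q_s(\tau)]^{r_2}$ with $r_2=1$. This formalizes the remark made earlier in Section~\ref{sec:rel_abs_quality} that the intersection of the $s$-th quality profile with the vertical line $\tau=0.1^q$ is the asymptote of the data profile at precision $0.1^q$. It follows that a quality profile records only the endpoint $\alpha=\bar\alpha_s$ of each data-profile curve, and records it for every $\tau\in[0,1]$. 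Conversely, a data profile records the whole path in $\alpha$, but only for one value of $\tau$.

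\textbf{Step 2 (quality profiles do not determine data profiles).} Take two solvers that reach identical final values $f_s^{(p)}(x^\ast)$ on every $p$, but along different trajectories, for instance one converging after a few evaluations and the other only at $\bar\alpha_s$. Their quality profiles coincide for every $\tau$, $r_1$ and $r_2$, whereas their data profiles differ for intermediate $\alpha$. This shows that quality profiles disregard the computational budget.

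\textbf{Step 3 (data profiles do not determine quality profiles).} Fix $\tau_0$. Take two solvers whose trajectories produce the same sets of $\tau_0$-solved problems at every budget, but whose final accuracies differ below that threshold, for example $f_s^{(p)}(x^\ast)-f_L^{(p)}$ equal to $10^{-3}$ times the initial gap for one solver and $10^{-12}$ times it for the other. Their data profiles at $\tau_0^{r_1}=10^{-2}$ coincide, while their quality profiles separate at $\tau^{r_1}=10^{-6}$.

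The main obstacle is that the Observation is qualitative, so the real work is choosing a formalization that the reader accepts as faithful to it. I would anchor that formalization on the common function $D_s(\alpha;\tau)$. In the counterexamples the only subtle point is that $f_L^{(p)}$ must be the same in both comparisons. Under the weak choice $f_L^{(p)}=\min_s f_s^{(p)}(x^\ast)$ this requires a third solver, or problems with known minima, so that the constructed solvers do not move $f_L^{(p)}$ (compare Lemma~\ref{lem:robustness_ratios}). I would include such a fixed reference solver in both constructions.
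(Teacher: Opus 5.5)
Your proposal is correct, and it reaches the Observation by a more formal route than the paper. The paper gives no argument beyond contrasting definitions:
\begin{itemize}
\item \eqref{eq:def_Qs_b} sweeps the precision at each solver's final outcome;
\item the Mor\'e--Wild test \eqref{equ:data_profiles} fixes the precision and sweeps the budget;
\item the only bridge it states is the asymptote remark in Section~\ref{sec:rel_abs_quality};
\item Section~\ref{sec:quality_profiles_derivative--free} restates the contrast as ``reversing the rationale'' of data profiles, drawing one quality profile per budget $nf_i$.
\end{itemize}
Your two-parameter function $D_s(\alpha;\tau)$ puts all of this into one object. Data profiles are its $\tau$-slices, quality profiles are its $\alpha$-slices, and the paper's derivative--free sequence is the family of slices at the budgets $nf_i$. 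Step~1 explains \emph{why} the asymptote remark holds. Steps~2--3 give ``complementary'' a checkable meaning, namely that neither slice determines the other, which the paper only asserts. The paper's account is shorter; yours is the one that can actually be verified.

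Some points to tighten:
\begin{itemize}
\item By \eqref{eq:def_Qs_b}, the fraction of solved problems is itself $[Q_s(\tau)]^{r_2}$. So the quality profile is $\tau\mapsto D_s(\bar\alpha_s;\tau)$, not $[D_s(\bar\alpha_s;\tau)]^{r_2}$. This is harmless, since $x\mapsto x^{r_2}$ is increasing and injective on $[0,1]$, but the notation should be made consistent.
\item If the budget is fixed in iterations or in absolute function evaluations, $\bar\alpha_s$ depends on $p$. It is cleaner to take $\alpha\to\infty$, since the best value found is constant once the solver stops.
\item Mor\'e and Wild take $f_L^{(p)}$ as the best value within their budget $\mu_f$. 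This matches the weak choice only when the budgets agree.
\end{itemize}

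A more substantive point concerns the scope of Step~3. Step~1 shows that the whole continuum $\{D_s(\cdot;\tau)\}_{\tau\in[0,1]}$ \emph{does} determine $Q_s$, through its asymptotes. Symmetrically, with $f_L^{(p)}$ held fixed, quality profiles at every budget determine every data profile. So Step~3 should be stated for finitely many precision levels $\sigma_1<\cdots<\sigma_\ell$, which is how data profiles are used in practice. Your construction covers that case if both final gaps are placed strictly between two consecutive levels, or both below $\sigma_1$.

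Finally, the extra reference solver in Step~3 is convenient for pinning the ratios $10^{-3}$ and $10^{-12}$, but it is not necessary. With only your two solvers, the better one defines $f_L^{(p)}$ and has zero gap, and the separation at small $\tau$ survives.
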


\subsection{Application of the quality profiles to smooth optimization}
\label{sec:application}
\rev{\chri{To show the actual effectiveness of quality profiles} we first focus on large scale unconstrained optimization problems. Hence, we select all the large scale unconstrained problems in the {\sf CUTEst} collection \cite{cutest} \chri{which represent our test set $\cal{P}$}. Namely, we consider all the problems with variable dimension, and for each problem we consider three instances corresponding to ${n=1,000}$, $n=5,000$ and $n=10,000$ (unless some specific \gianni{limits} on the problem dimension are \gianni{imposed}). The resulting set ${\cal P}$ of test problems consists of 166 problems and they are included in Table~\ref{tab:testproblems}.}
\begin{table}[htbp]
{
		\centering
		\rev{
			\begin{tabular}{l@{\hskip 1.0in}l@{\hskip 1.0in}l}
				\textit{Problem} & \textit{Problem} & \textit{Problem} \\
				\midrule
				ARWHEAD & DIXMAANK$^*$ & NONCVXUN2 \\
				BDQRTIC & DIXMAANL$^*$ & NONDIA \\
				BROYDN7D& DQDRTIC & NONDQUAR \\
				BRYBND  & DQRTIC & PENALTY1 \\
				CHAINWOO& EDNSCH & POWELLSG \\
				COSINE  & ENGVAL1 & POWER \\
				CRAGGLVY& FLETCBV2 & QUARTC \\
				CURLY10 & FLETCBV3 & SCHMVETT \\
				CURLY20 & FLETCHCR & SINQUAD \\
				CURLY30 & FMINSURF$^{**}$ & SPARSINE \\
				DIXMAANA$^*$ & FREUROTH & SPARSQUR \\
				DIXMAANB$^*$ & GENHUMPS & SPMSRTLS \\
				DIXMAANC$^*$ & GENROSE & SROSENBR \\
				DIXMAAND$^*$ & LIARWHD & TESTQUAD \\
				DIXMAANE$^*$ & MOREBV & TOINTGSS \\
				DIXMAANF$^*$ & MSQRTALS$^{***}$ & TQUARTIC\\
				DIXMAANG$^*$ & MSQRTBLS$^{***}$ & TRIDIA\\
				DIXMAANH$^*$ & NCB20 & VARDIM \\
				DIXMAANI$^*$ & NCB20B & VAREIGVL \\
				DIXMAANJ$^*$ & NONCVXUN & WOODS\\
				\midrule
			\end{tabular}	
			\caption{\rev{List of the 166 \gianni{large scale unconstrained optimization} problems from {\sf CUTEst} collection included in the set ${\cal P}$.
					Each problem is considered in three different dimensions, namely $n \gianni{ \in \{1,000; 5,000; 10,000\}}$, with  the following few  exceptions:
					$^*$: $n \gianni{ \in \{1,500; 3,000\}}$;  $^{**}$: $n \gianni{ \in \{1,024; 5,625; 10,000\}}$;  and  finally $^{***}$: $n \gianni{ \in \{1,024; 4,900\}}$.}}\label{tab:testproblems}}
}
\end{table}
	%
	
	%
	%

\subsubsection{Results of a first numerical experience with quality profiles for smooth optimization}\label{sec:results-first}
\rev{As a first example of the application of the weak quality profiles, we select four well--known \chri{algorithms} for large scale unconstrained optimization from literature.}
%
%
We wish to recall that the choice of the codes was mainly suggested by the \rev{following} motivations:
\begin{itemize}
    \item all the selected \chri{optimization methods} are well--known and of easy access, so that a full \rev{reproducibility} of our numerical outcomes, along with their interpretation, is \gianni{easily} allowed for readers;
    \item \chri{each} code can be interfaced through a unique platform, and each \gianni{code} is available for testing on a widely acknowledged environment (namely {\sf CUTEst} \cite{cutest}). Moreover, observe that the interfaces of the \gianni{selected} codes within {\sf CUTEst} test environment are already available \chri{as well};
    \item our choice of different and nonhomogeneous solvers, \ie Truncated Newton's methods, \rev{Quasi--Newton methods}, Nonlinear Conjugate Gradient schemes, etc. is done on purpose. \rev{Also note that the selected \chri{algorithms} have different requirements, in terms of first-order or second-order information needed.}
    The idea is that of using complete numerical results possibly containing dissimilarities of performance, so that quality profiles could be adequately tested on a meaningful \chri{and diverse} set of codes. {\em Hence, we categorically exclude from our numerical experience any aim of appointing preferences for specific codes, based on \gianni{the description and the analysis of} their performance}.
\end{itemize}
\rev{
The list of the selected solvers follows\gianni{. It} includes: the code name, the name of the corresponding {\sf CUTEst} package, the reference paper[s] and a brief description.
\rev{As regards the parameters, for all the codes we use \gianni{their own} default ones. \gianni{Moreover, for all codes we} set the maximum number of iterations allowed to 500,000.}
\begin{itemize}
	\item {\sf L--BFGS} ({\tt lbfgs}) \cite{nocedal:80,liu.nocedal}: this code implements the Limited-Memory BFGS method
	designed for unconstrained minimization by J. Nocedal.
    We adopt the default ``memory'' value, namely $M=5$; the  stopping criterion \chri{imposed} by the author is
$ \| \nabla f(x_k) \|_2  \leq 10^{-5}  \max \{ 1 , \|x_k\|_2\}$.
\\
\item {\sf CG+}   ({\tt cgplus}) \cite{gilbert.nocedal}: it is a Nonlinear Conjugate Gradient algorithm
designed by G. Liu, J. Nocedal and R. Waltz. \chri{It allows to select three different}
Conjugate Gradient strategies, namely Fletcher--Reeves, Polak--Ribi\`ere,
and positive Polak--Ribi\`ere. We adopt the positive Polak--Ribi\`ere \gianni{formula} without restart with the default stopping criterion
$\|\nabla f(x_k)\|_{\infty} \leq 10^{-5}( 1 + |f(x_k)|)$.
\\
\item {\sf TRON} ({\tt tron}) \cite{lin.more}: it implements a Trust Region Newton method for large scale unconstrained or bound constrained minimization, designed by C. Lin and J. Mor\'e. The default stopping criterion is
$\|\nabla f(x_k)\|_{\infty} \leq 10^{-5}$.
\\
\item {\sf CG$\_$DESCENT}  ({\tt cg$\_$descent}) \cite{hager.zhang:05,hager.zhang:06b}: it is a Nonlinear Conjugate Gradient
method designed by W. Hager and H. Zhang. An efficient linesearch is adopted and a novel convergence criterion (approximate Wolfe conditions) allows \chri{to obtain} high accuracy. The default stopping criterion is
$\|\nabla f(x_k)\|_{\infty} \leq 10^{-5}$.
\end{itemize}
\gianni{Thus, on the overall the} \chri{complete set \gianni{${\cal S}$} of adopted solvers} is \gianni{given by}
    $${\cal S}=\{\text{{\sf L--BFGS}, {\sf CG+}, {\sf TRON}, {\sf CG$\_$DESCENT}}\}. $$}
\rev{
The performance of these codes have been \chri{extensively} studied. Now we aim at assessing their capability to determine better local minimizers (\ie solution points with lower objective function value) by using the quality profiles introduced in the previous section.}
\par
\rev{In the following we report the quality profiles concerning the comparison among the four codes, recalling that \gianni{for the problem $p \in {\cal P}$} we use as reference value \gianni{the quantity (see also \revmax{items }1.--3. at page \pageref{page_4})}
	$$ f_L^{(p)} = \min_{s \in {\cal S}} \left\{ f^{(p)}_s(x^\ast) \right\}.$$
In particular, Figure~\ref{fig:1new} and Figure~\ref{fig:1new-b} depict the plot of the quality profiles obtained by setting respectively $r_1=1$ with $r_2=1$, and $r_1=5$ with $r_2=1$ in \eqref{eq:def_Qs_b}.}
	\begin{figure}[htbp]
		\centering
		\includegraphics[width=0.85\textwidth]{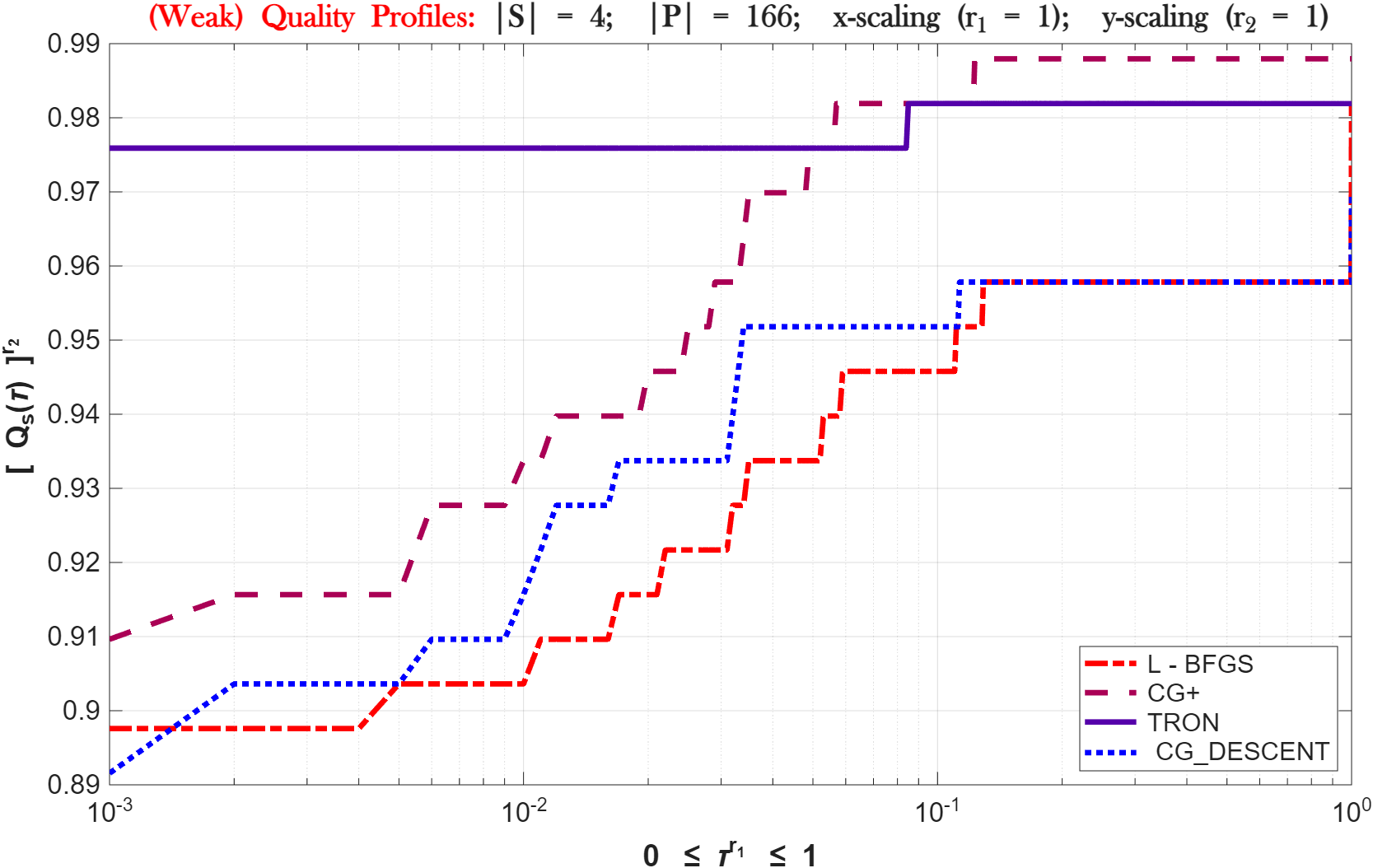}
		\caption{Quality profiles comparing {\sf L--BFGS}, {\sf CG+}, {\sf TRON}, {\sf CG$\_$DESCENT}, obtained with $r_1=1$ and $r_2=1$ in \eqref{eq:def_Qs_b}. {\sf TRON} appears to outperform the other three solvers, when {\em  a relatively low accuracy interval is investigated}.}
		\label{fig:1new}
	\end{figure}
\begin{figure}[htbp]
	\centering
	\includegraphics[width=0.85\textwidth]{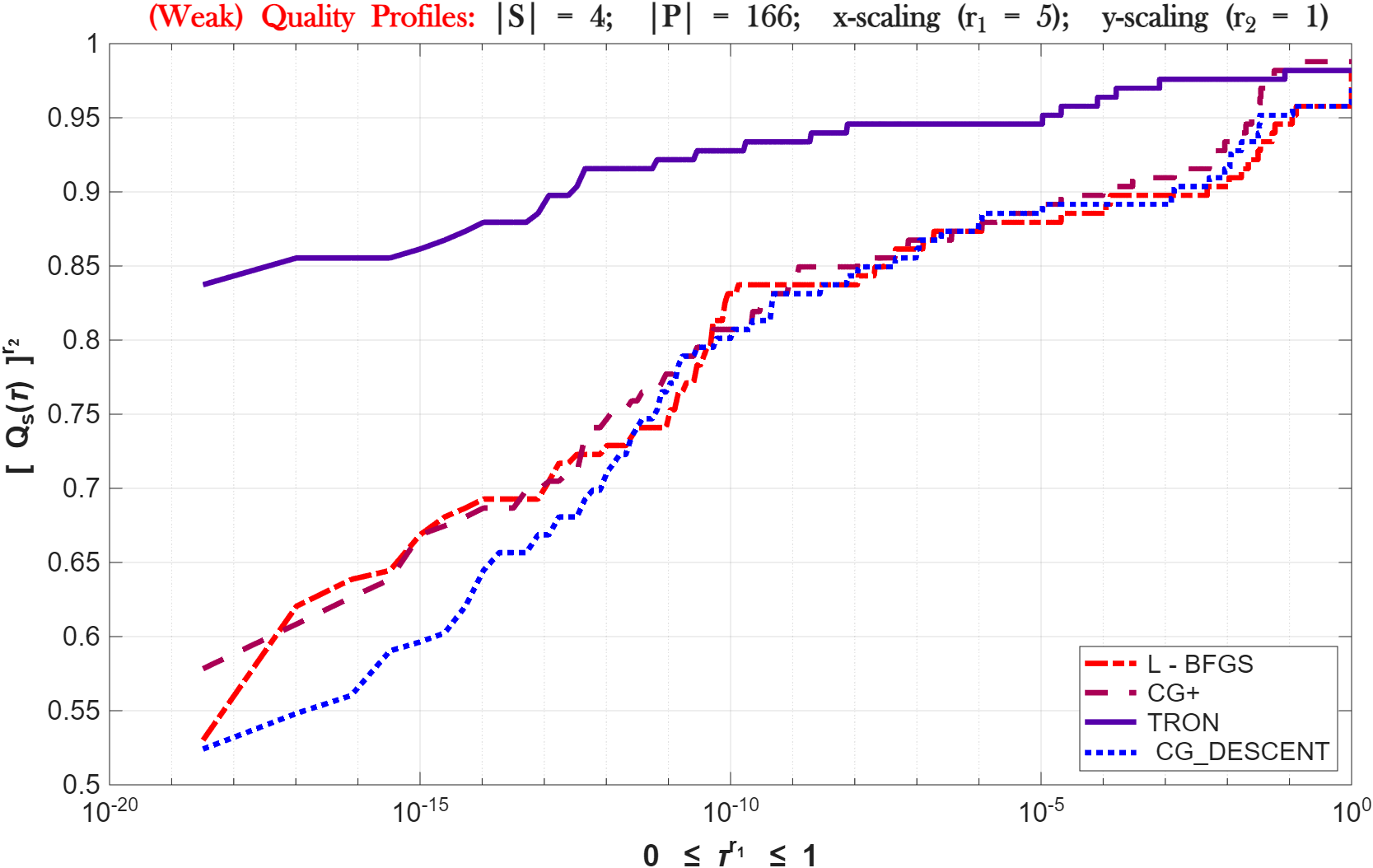}
	\caption{Quality profiles comparing {\sf L--BFGS}, {\sf CG+}, {\sf TRON}, {\sf CG$\_$DESCENT}, obtained with $r_1=5$ and $r_2=1$ in \eqref{eq:def_Qs_b}. Again {\sf TRON} appears to outperform the other three solvers, even in case {\em a relatively high accuracy interval is investigated}.}
	\label{fig:1new-b}
\end{figure}

\gianni{In other words the abscissa axis reports $\tau^{r_1}$ while the ordinate axis reports the quantities $\{[Q_s(\tau)]^{r_2}\}$. By using the semilog scaling, equally spaced tickers on the abscissa axis correspond to a $\times 10$ multiplication. Hence, $r_1=1$ (Figure~\ref{fig:1new}) implies that the quality profile reports results for a precision range given by $\tau^{r_1} \in [10^{-3},1]$ (low accuracy interval). Conversely, in case $r_1 > 1$ (Figure~\ref{fig:1new-b}) the quality profile, on average, reports results for a precision range $[10^{-3(r_1+1)},1]$ (high accuracy interval), as motivated in Observation \ref{observ:5}.}
\rev{The best code in terms of percentage of problems where the best value of the objective function  \chri{is reached} (\ie the intersection of the profiles with the leftmost vertical line) seems to be {\sf TRON}. Conversely, as concerns the percentage of failures (\ie the intersection of the profiles with the vertical line $\tau^{r_1}=1$) {\sf CG+} appears slightly preferable. However, from Figure~\ref{fig:1new} and Figure~\ref{fig:1new-b}, we easily realize how difficult it can be to clearly compare the solvers especially for small values of $\tau$, due to close tracks associated to the profiles. Therefore, based on Observation~\ref{observ:5}, we can rescale the axes to expand portions of the plots, by choosing different values of $r_1$ and $r_2$. Figures~\ref{fig:new6plots} and \ref{fig:new6plots_b} report plots corresponding to different choices of $r_1$ and $r_2$.}
\begin{figure}[htbp]
	\centering
	\begin{subfigure}{0.75\textwidth}
		\includegraphics[width=\textwidth]{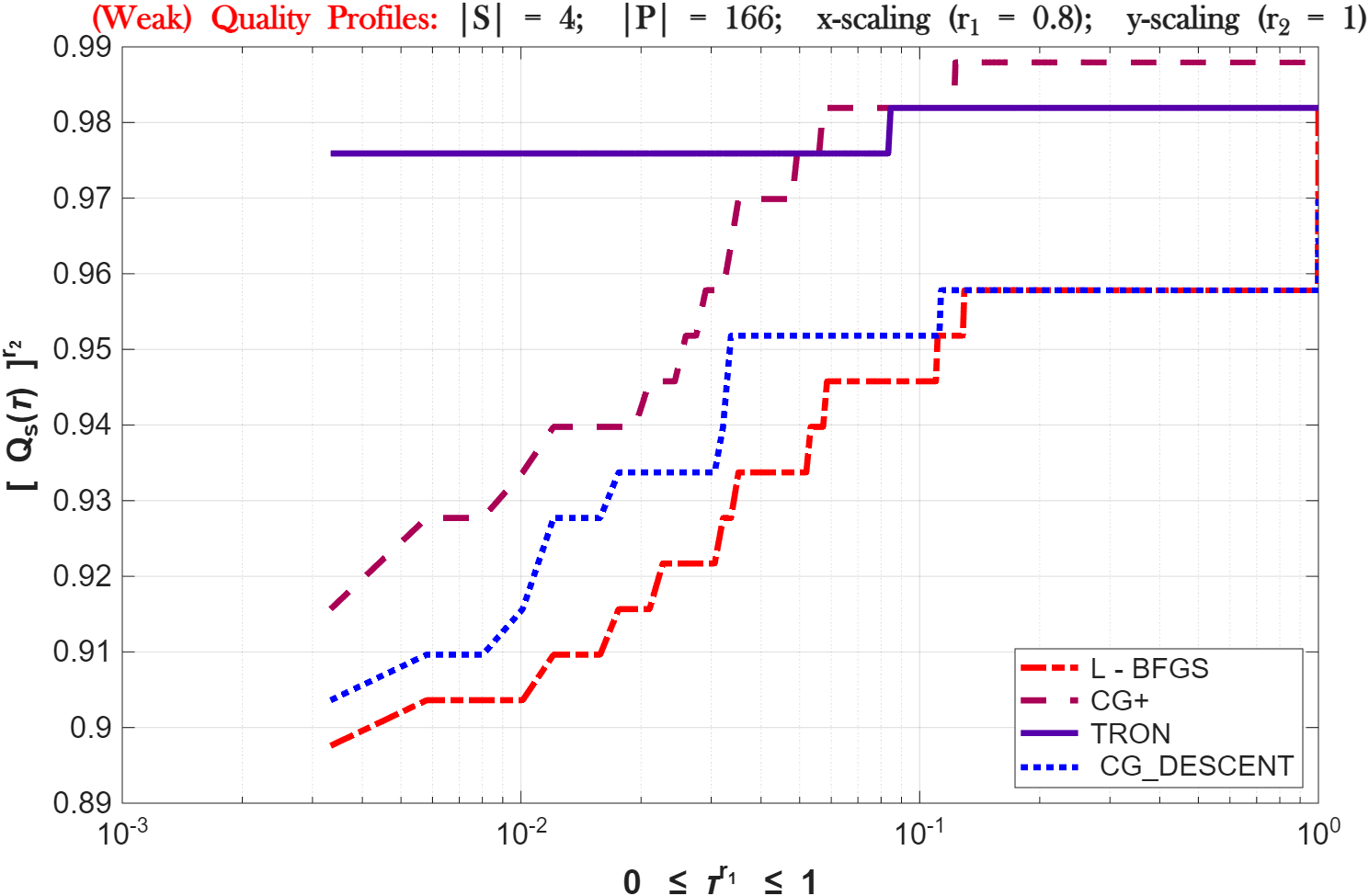}
		\label{fig:newa}
	\end{subfigure}
	\hfill
	\begin{subfigure}{0.75\textwidth}
		\includegraphics[width=\textwidth]{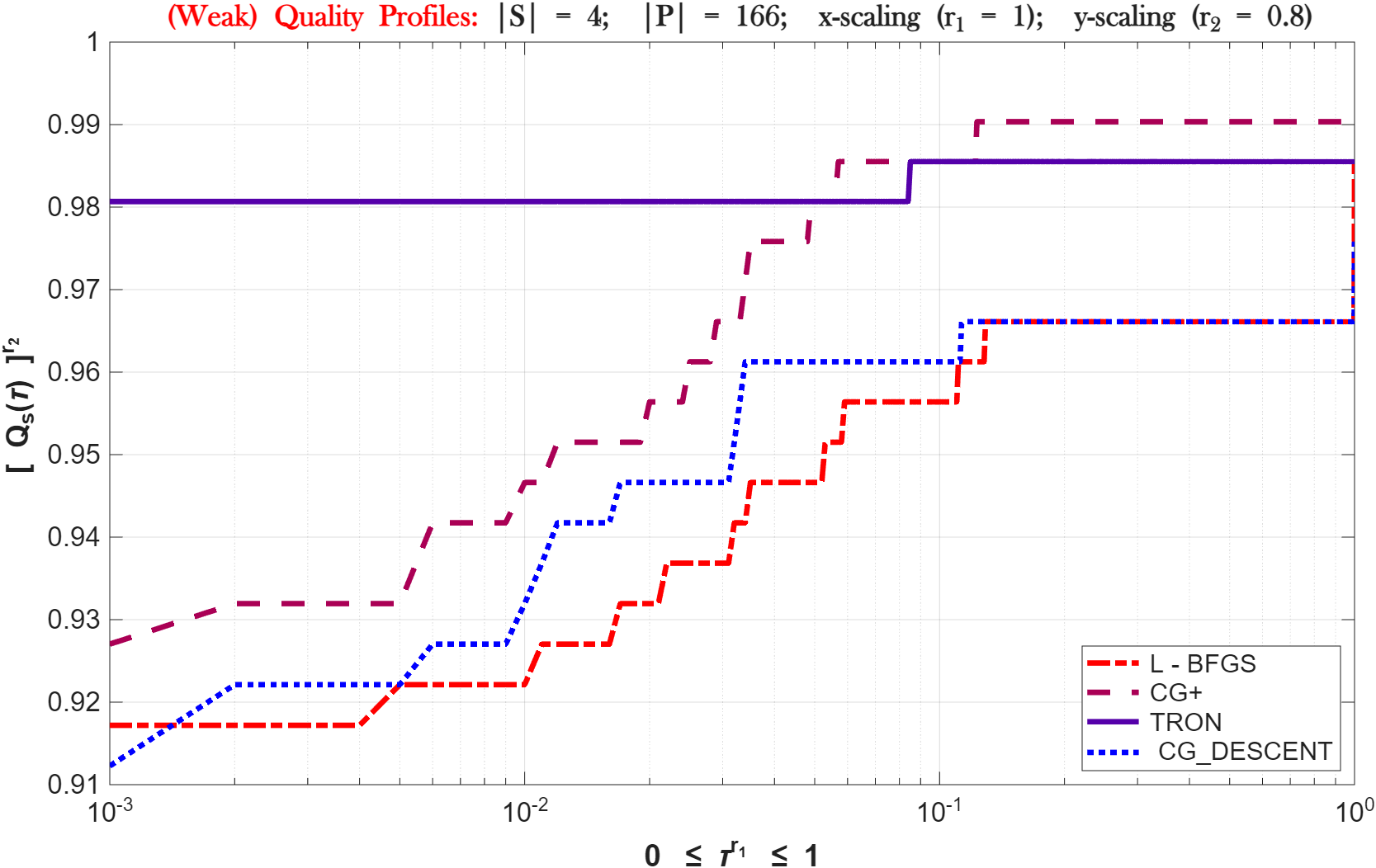}
		\label{fig:newb}
	\end{subfigure}
	\begin{subfigure}{0.75\textwidth}
		\includegraphics[width=\textwidth]{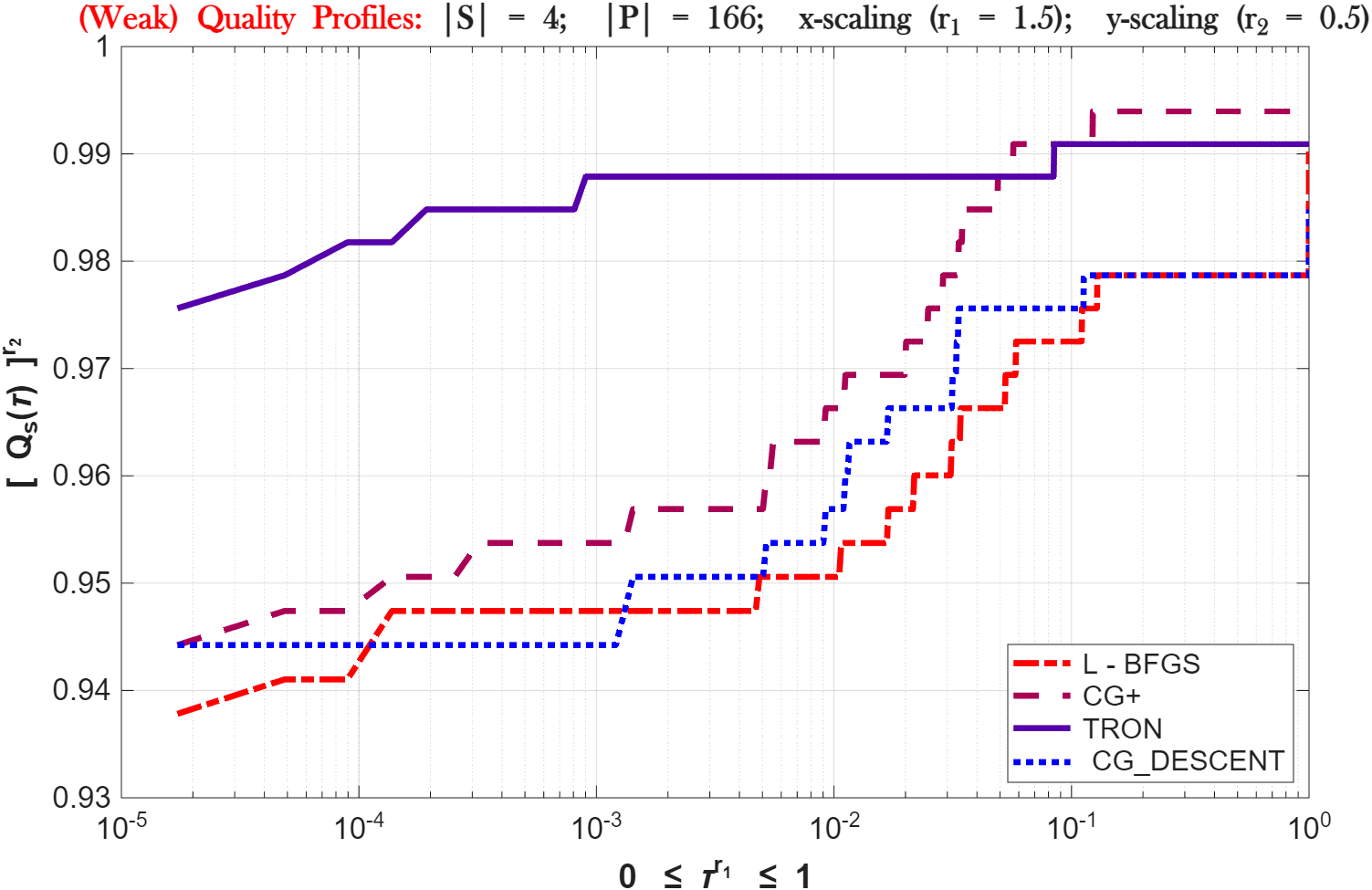}
		\label{fig:newc}
	\end{subfigure}
	\caption{\gianni{Ex}amples of quality profiles for the solvers {\sf L--BFGS}, {\sf CG+}, {\sf TRON}, {\sf CG$\_$DESCENT}\gianni{.}}
	\label{fig:new6plots}
\end{figure}
\begin{figure}[htbp]
	\centering
	\begin{subfigure}[b]{0.75\textwidth}
		\includegraphics[width=\textwidth]{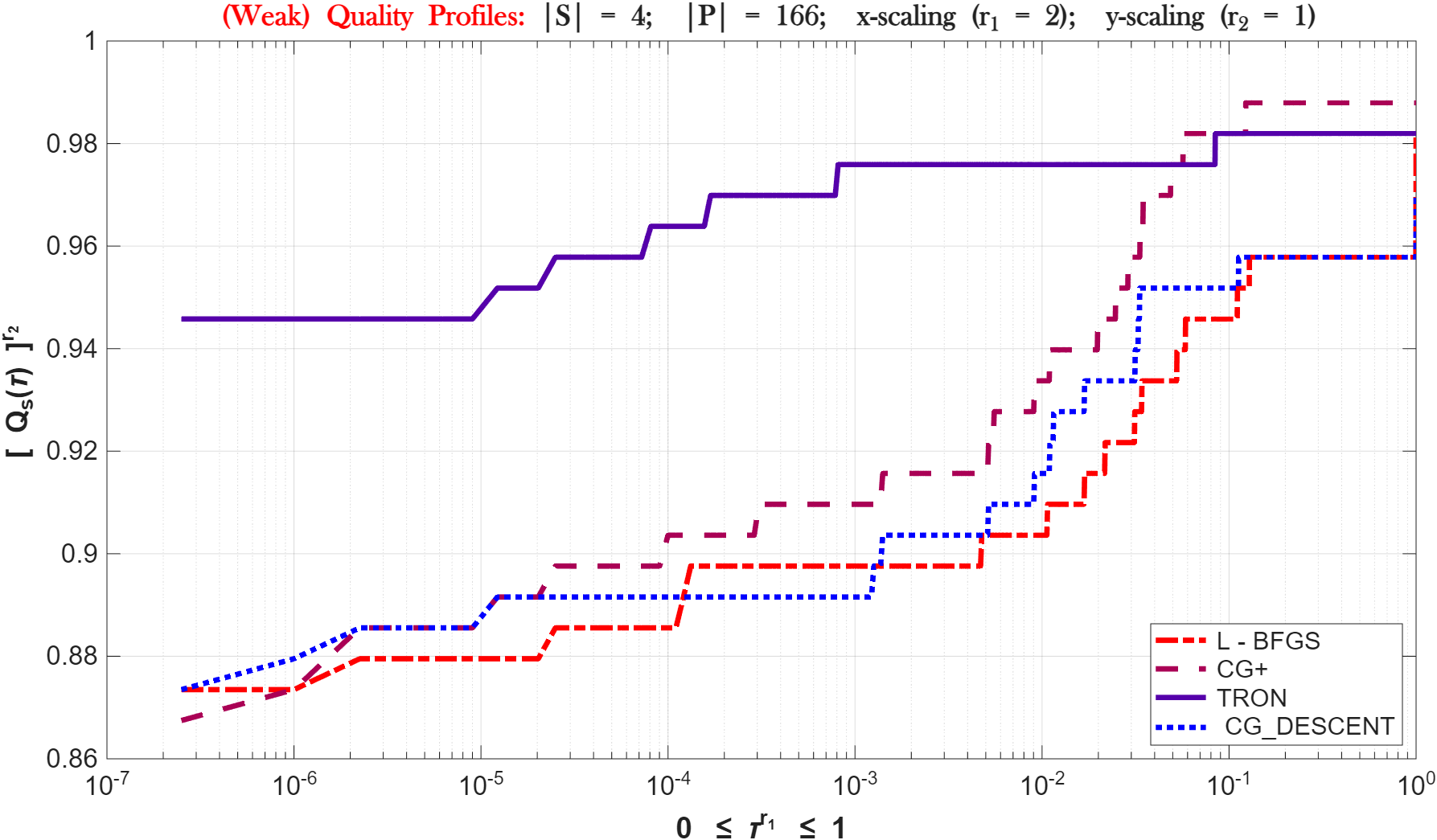}
		\label{fig:newd}
	\end{subfigure}
	\begin{subfigure}[b]{0.89\textwidth}
		\includegraphics[width=\textwidth]{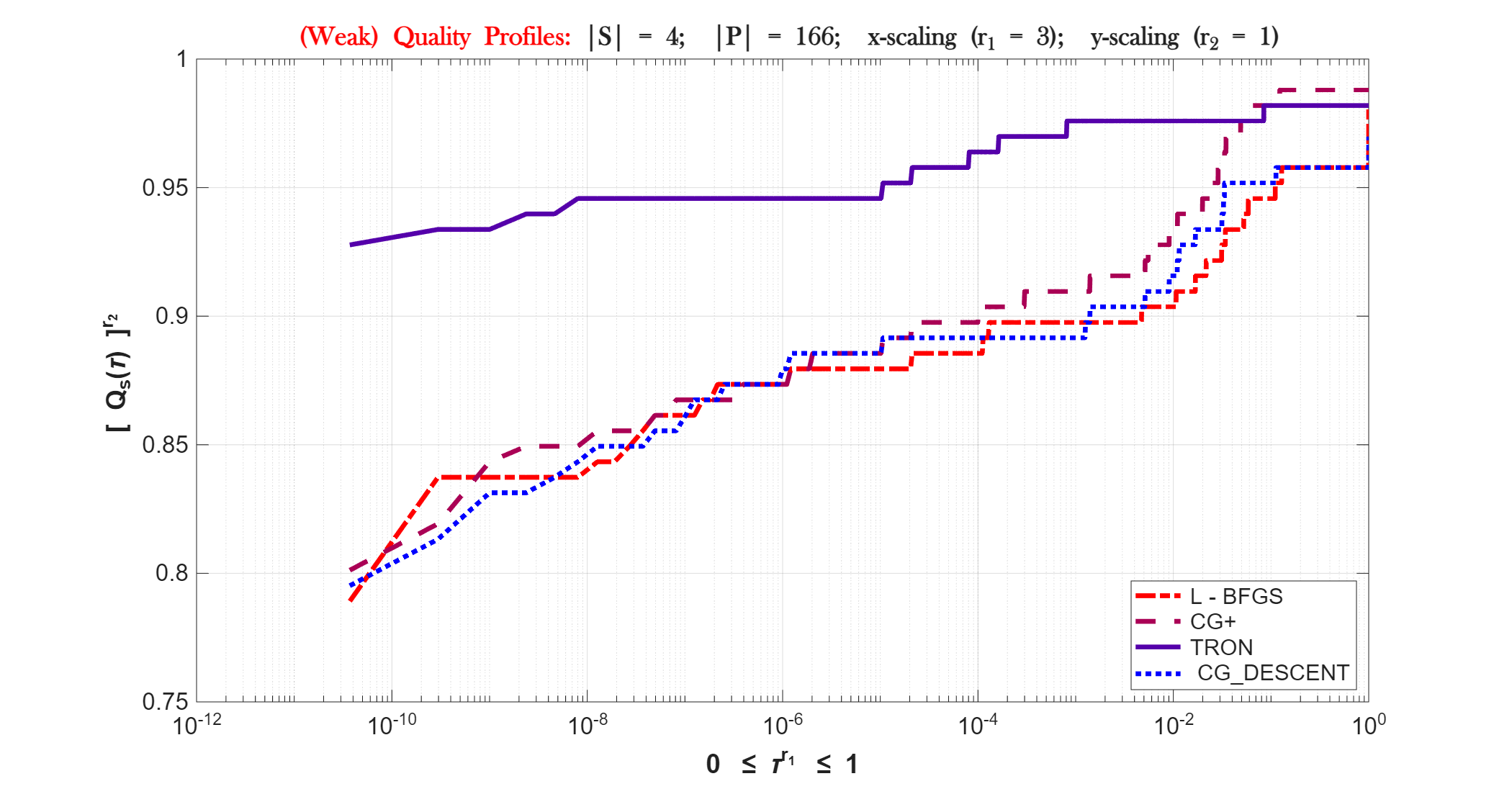}
		\label{fig:newe}
	\end{subfigure}
	\hfill
	\begin{subfigure}[b]{0.75\textwidth}
		\includegraphics[width=\textwidth]{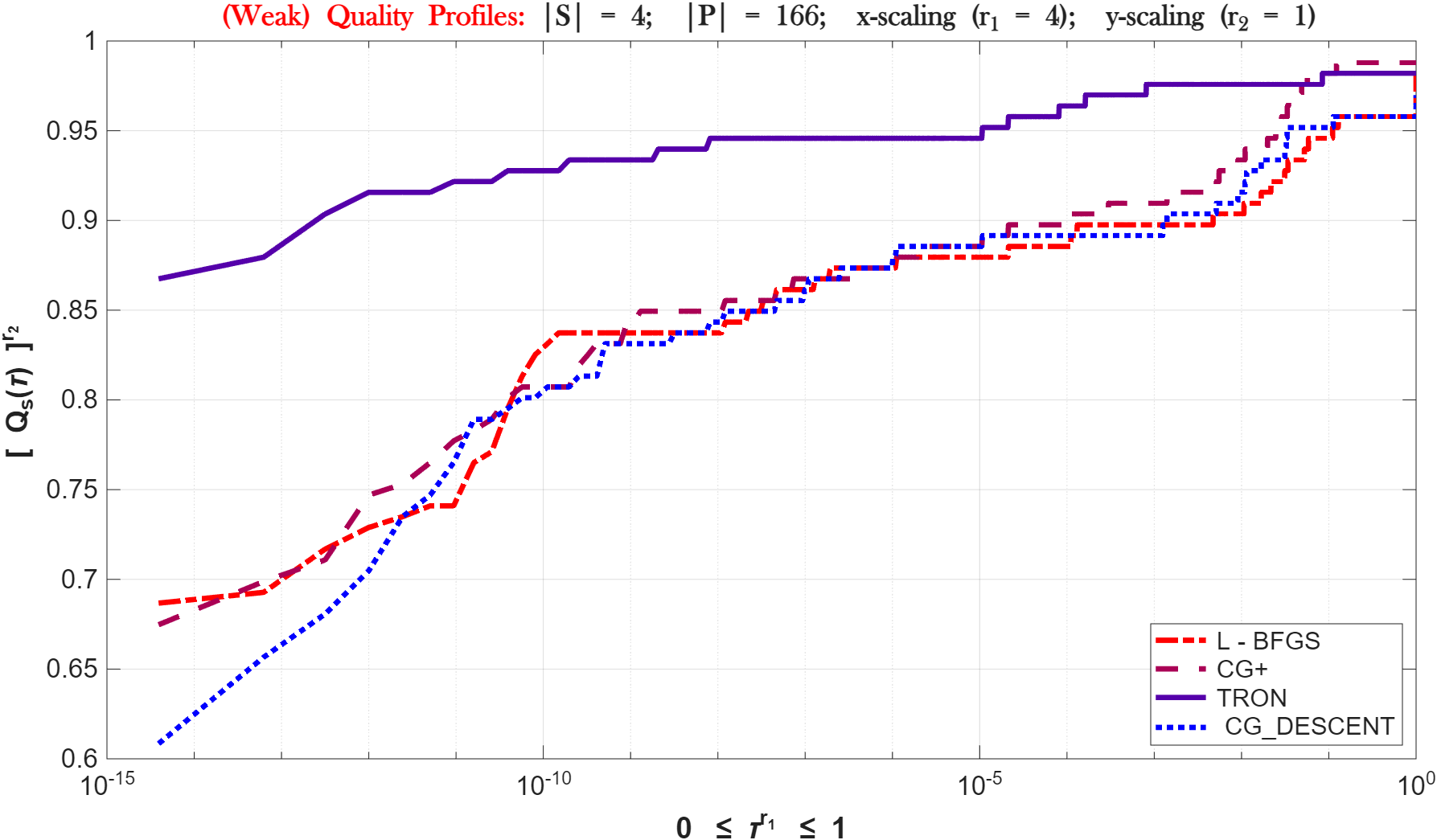}
		\label{fig:newf}
	\end{subfigure}
	\caption{\gianni{Ex}amples of quality profiles for the solvers {\sf L--BFGS}, {\sf CG+}, {\sf TRON}, {\sf CG$\_$DESCENT}\gianni{.}}
	\label{fig:new6plots_b}
\end{figure}
The expansion of the plots as consequence of choosing $r_1$ and/or $r_2$ \gianni{larger} than one is clearly shown. 

Conversely, expanding the portion of the plots corresponding to large value of the abscissa (\ie $\tau^{r_1}$ close to one) clearly highlights that not all the four tracks reach the value one.
This can be explained by recalling that the piece of information related to the percentage of failures associated to each solver is properly taken into account by quality profiles\gianni{, too.} The last consideration can be regarded as a counterpart of a \chri{corresponding} \gianni{well--known} property that holds for performance and data profiles.
This \gianni{also} allows us to draw conclusions in terms of robustness of the four algorithms (similarly to the performance profiles). 
\\
\revgianni{We complete this section by reporting in Figure~\ref{fig:1new-lin} the same quality profiles in Figure~\ref{fig:1new}, but where the semilog scaling is replaced by linear scaling. Drawing vertical lines by the abscissa values $0.1, 0.1^2, \ldots, 0.1^{r_1}$ gives some indication on the {\em absolute precision} of quality profiles, even when using linear scaling, in accordance with the conclusions in Section \ref{sec:rel_abs_quality}. Note that in this specific case, linear scaling is unable to clearly depict the comparison among solvers, because for $\tau^{r_1} \geq 0.1$ the four algorithms show very similar trends, and also tuning the value of $r_2$ does not yield significant enhancements.}
\begin{figure}[htbp]
	\centering
	\includegraphics[width=0.9\textwidth]{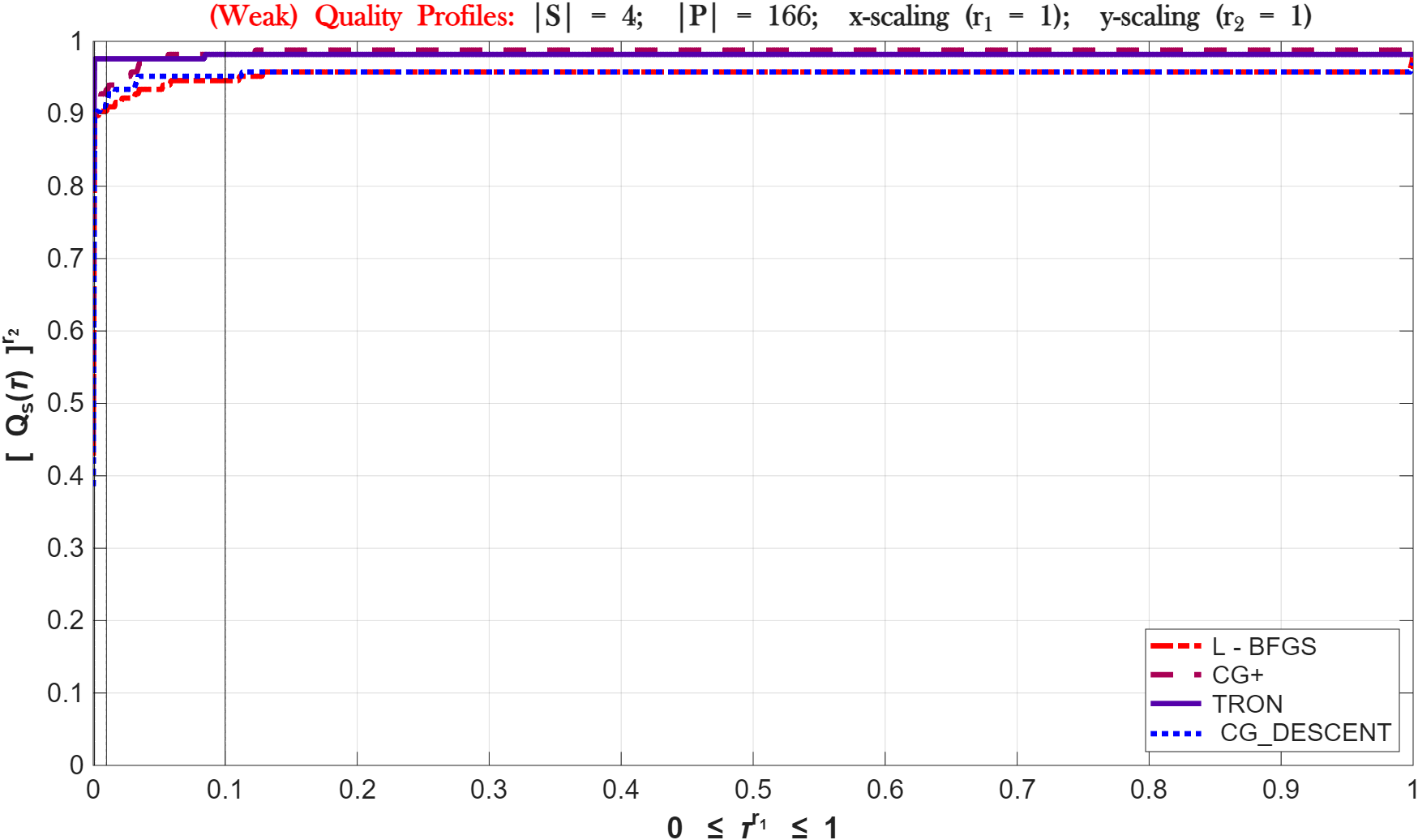}
	\caption{Quality profiles comparing {\sf L--BFGS}, {\sf CG+}, {\sf TRON}, {\sf CG$\_$DESCENT}, obtained with $r_1=1$ and $r_2=1$ in \eqref{eq:def_Qs_b}. Linear scaling is adopted in place of semilog scaling, showing a poor representation.}
	\label{fig:1new-lin}
\end{figure}

\subsubsection{Results of a second numerical experience with quality profiles for smooth optimization}
\rev{To further exploit the potentialities of the quality profiles
when ranking algorithms for smooth optimization, we now report the results of another experimentation. \revnew{\gianni{Considering} the recent renewed interest for algorithms that use negative curvature directions (see \cite{curtis.robinson:19})}, we focus on Truncated Newton methods that include them. It is well--known that the combined use of Newton--type directions and appropriate negative curvature directions\chri{,  within a Truncated Newton method,} has a twofold advantage\gianni{. From} a theoretical \chri{standpoint, in the unconstrained case, it entails} convergence towards second order critical points (stationary points where the Hessian matrix is positive semidefinite); from a computational point of view, \gianni{it shows} a better capability of the iterates to escape \gianni{from a} region of non-convexity of the objective function (see, \eg the seminal \gianni{papers}, \cite{more.sorensen:use,lucidi.rochetich.roma,glrt2} and the more recent \gianni{one \cite{curtis.robinson:19}, including therein references}). This latter property provides such algorithms \revnew{with} an enhanced capability to determine better local minimizers, so that quality profiles may enable users to clearly highlight \gianni{and gauge} this property.}
\par
\rev{We consider the Truncated Newton method proposed in \cite{fapiro:2025COAP}. It derives from the algorithm introduced in \cite{Ca-Fa-Po-Ro:2020}, where the SYMMBK \cite{chandra} algorithm is used within the inner iterations, to compute a gradient--related search direction. In \cite{fapiro:2025COAP}, the SYMMBK algorithm is also used to compute an appropriate negative curvature direction. We refer the reader to \cite{fapiro:2025COAP} for any detail; we only recall that, at each outer iteration $k$ of the method, a pair of search directions $(d_k, s_k)$ is computed: $d_k$ is a gradient-related direction, $s_k$ is an appropriate direction of negative curvature \revmax{(if any)} and the new iterate is generated according to the scheme
\begin{equation}
	x_{k+1}=x_k + \alpha_k^2 d_k + \alpha_k s_k.       \label{equ:recurrence}
\end{equation}
The steplength $\alpha_k$ is computed by a second--order Armijo-type linesearch \cite{mccormick:mathprogr}.}
\par

\rev{In order to assess the effectiveness of different negative curvature directions that can be adopted, it is crucial to have a tool for evaluating the capability of an algorithm to converge towards local minimizers with a lower objective function value. In the following, we compare the four algorithms proposed in \cite{fapiro:2025COAP}. To briefly describe the different choices concerning the adopted negative curvature directions, we denote by \gianni{$G_k \in \re^{m \times m}$} the matrix whose \gianni{$m$} columns are the $\nabla^2 f(x_k)$--conjugate directions generated by the SYMMBK algorithm \gianni{at the $k$-th outer iteration, while $D_k\in \re^{m \times m}$ represents a diagonal matrix generated by the Bunch--Kaufman decomposition within SYMMBK, and its entries are related to the eigenvalues of $\nabla^2 f(x_k)$
(see \cite{fapiro:2025COAP} for a detailed description). Observe that the closer $x_k$ to the solution point $x^{\ast}$, the larger the number of columns in $G_k$ and the number of diagonal entries in $D_k$.} The \chri{main features} of the four algorithms are the following:
\begin{itemize}
	\item {\sf TN--NONegCurv}: it does not use any piece of information related to negative curvature directions;
    \item {\sf TN--NC1}: the adopted negative curvature \gianni{direction} is computed as the sum of the contributions of all the columns of the matrix $G_k$ associated to the negative eigenvalues (if any) \gianni{of $D_k$};
    \item {\sf TN--NC2}: the adopted negative curvature \gianni{direction} corresponds to the \gianni{(say any)}
    column of the matrix $G_k$ associated to the most negative eigenvalue of $D_k$;
    \item {\sf TN--NC3}: the adopted negative curvature \gianni{direction} corresponds to the
    column of the matrix $G_k$ associated to the first \gianni{negative} eigenvalue  (if any) of $D_k$.
\end{itemize}
Therefore, each algorithm adopts a different \gianni{strategy to build the} negative curvature direction \gianni{$s_k$ in \eqref{equ:recurrence}}, and this could significantly affect the capability of the algorithms to determine better local minimizers. \revnew{The stopping criterion for all the algorithms is the standard one
$ {\| \nabla f(x_k) \|_2  \leq 10^{-5}  \max \{ 1 , \|x_k\|_2\}}$.}
To carry out an investigation on the use of negative curvature directions, we now use quality profiles to compare  the results obtained \gianni{over the set of solvers}} 
	$${\cal S}= \{\hbox{{\sf TN--NONegCurv}, {\sf TN--NC1}, {\sf TN--NC2}, {\sf TN--NC3}} \}.$$

Figure~\ref{fig:1-case-smooth} reports the weak quality profiles for the case $r_1=1$ and $r_2=1$ in \eqref{eq:def_Qs_b} (\ie no scaling is imposed on either the axes of the plot).
\begin{figure}
	\centering
	\includegraphics[width=0.8\textwidth]{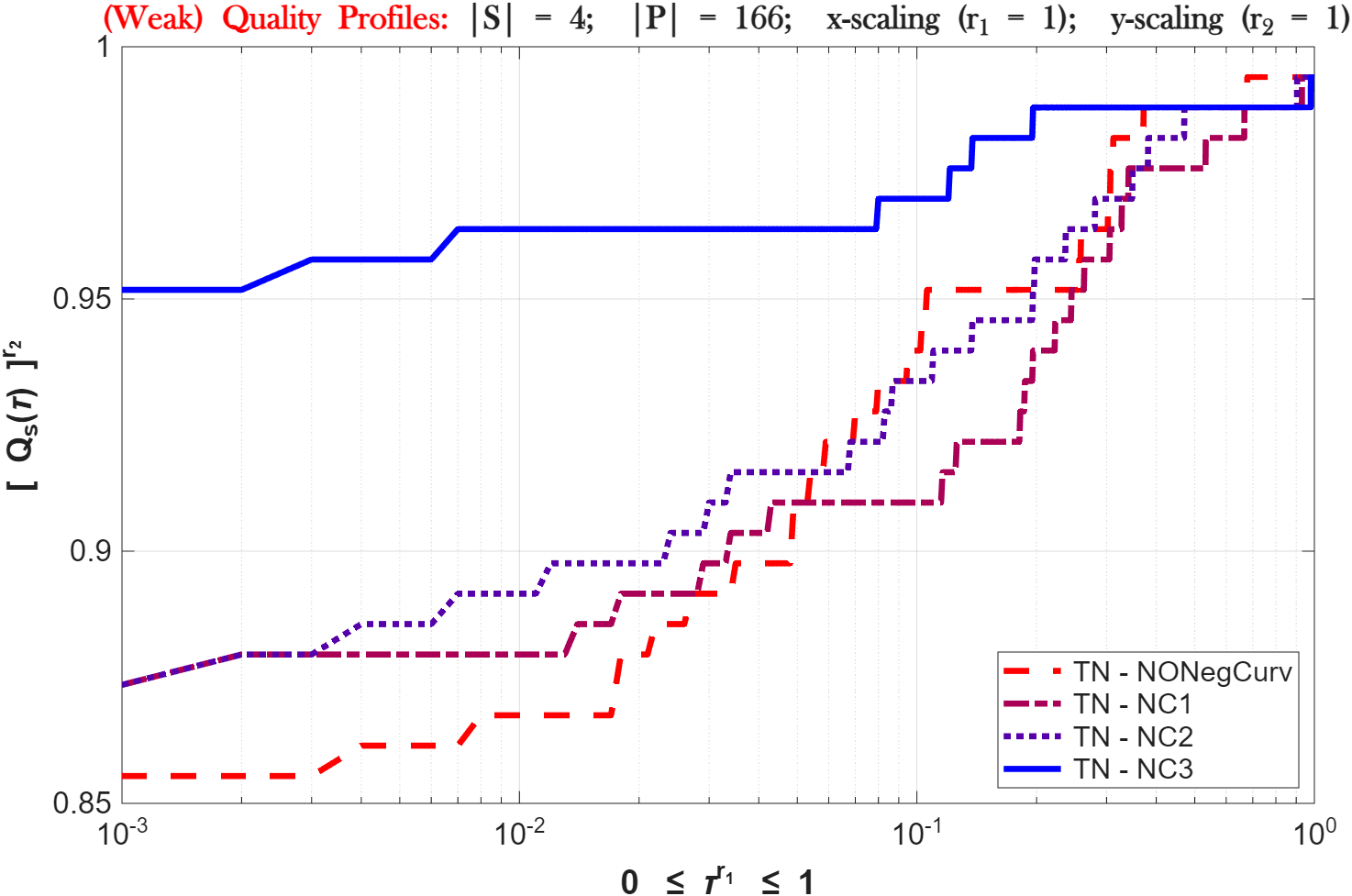}
	\medskip
	\caption{Example of weak quality profiles for the solvers {{\sf TN--NONegCurv}, {\sf TN--NC1}, {\sf TN--NC2} and {\sf TN--NC3}}, with $r_1=1$ and $r_2=1$ in \eqref{eq:def_Qs_b}.}
	\label{fig:1-case-smooth}
\end{figure}

Conversely, Figures~\ref{fig:6-cases-smooth} and \ref{fig:6-cases-smooth_b} report the \gianni{corresponding} quality profiles for different values of $r_1$ and $r_2$ in \eqref{eq:def_Qs_b}\gianni{, as by Observation \ref{observ:5}}.
\begin{figure}[htb]
		\centering
		\begin{subfigure}{0.69\textwidth}
			\includegraphics[width=\textwidth]{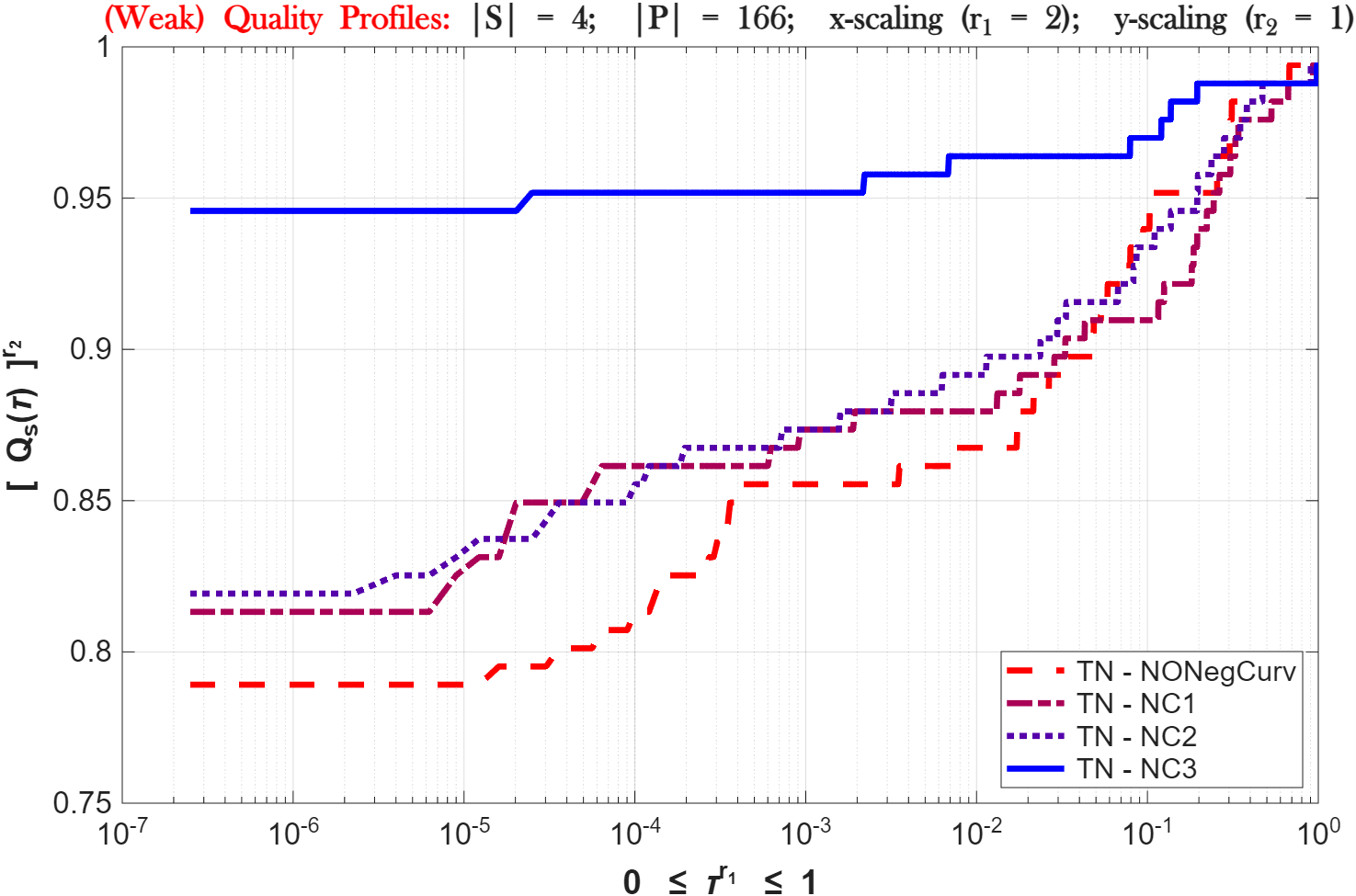}
			\label{fig:6-cases-smooth1.2}
		\end{subfigure}
		\hfill
		\begin{subfigure}{0.69\textwidth}
			\includegraphics[width=\textwidth]{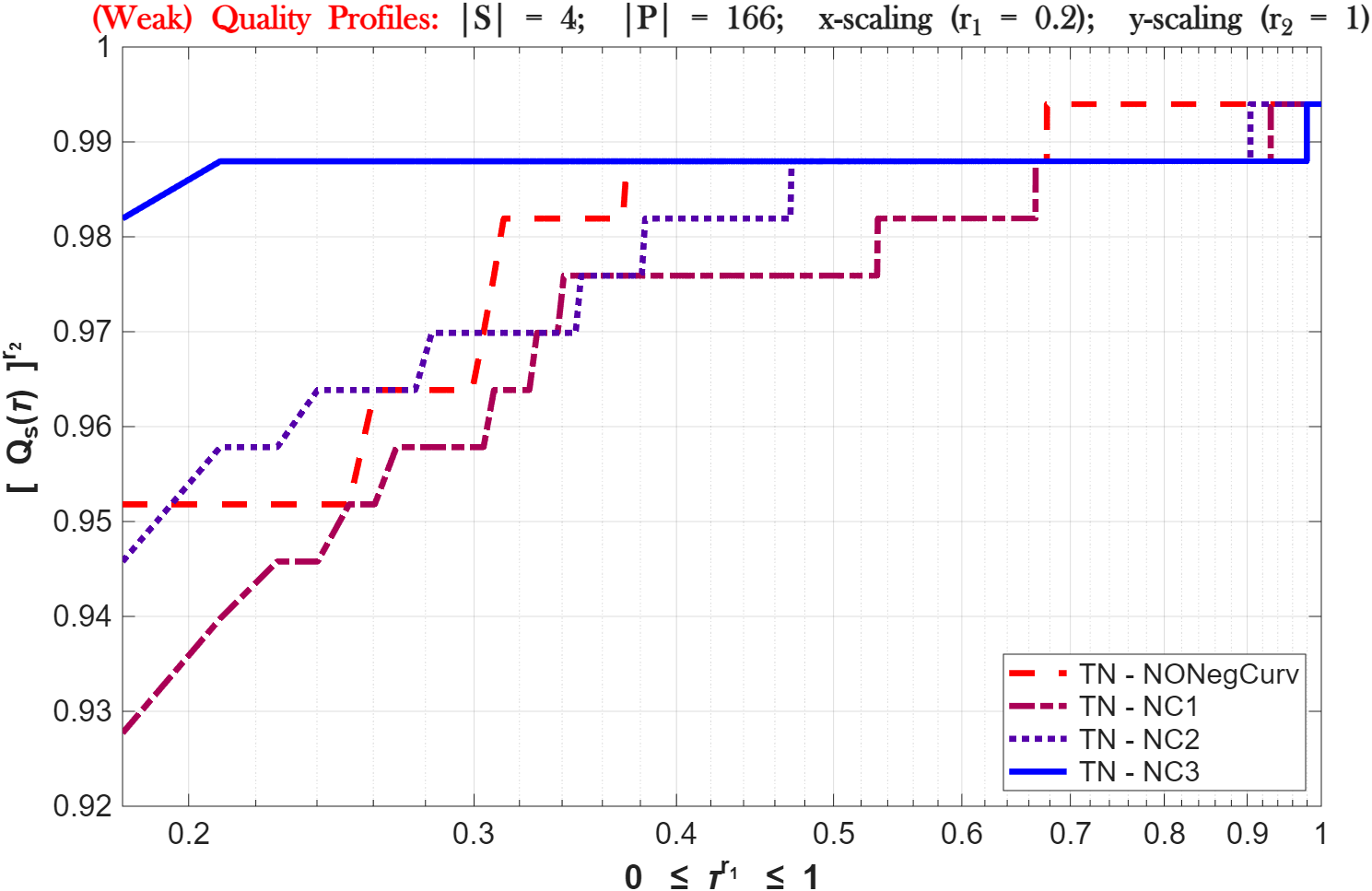}
			\label{fig:6-cases-smooth2.2}
		\end{subfigure}
		\begin{subfigure}{0.69\textwidth}
			\includegraphics[width=\textwidth]{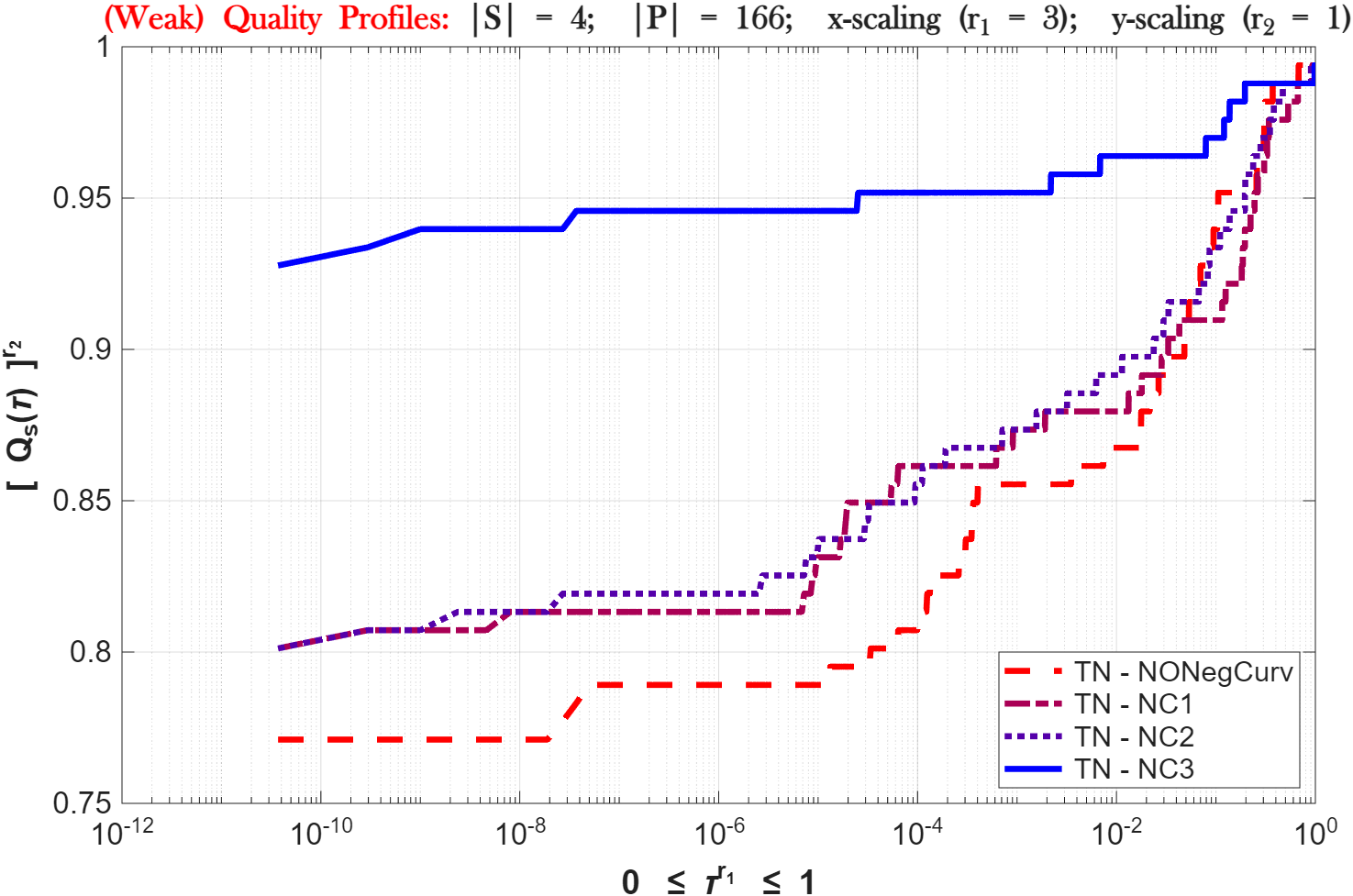}
			\label{fig:6-cases-smooth3.2}
		\end{subfigure}
		\caption{\gianni{Ex}amples of quality profiles for the solvers {{\sf TN--NONegCurv}, {\sf TN--NC1}, {\sf TN--NC2} and {\sf TN--NC3}}, selecting different values of the parameters $r_1$ and $r_2$.}
		\label{fig:6-cases-smooth}
\end{figure}
\begin{figure}[htb]
	\centering
	\begin{subfigure}{0.69\textwidth}
		\includegraphics[width=\textwidth]{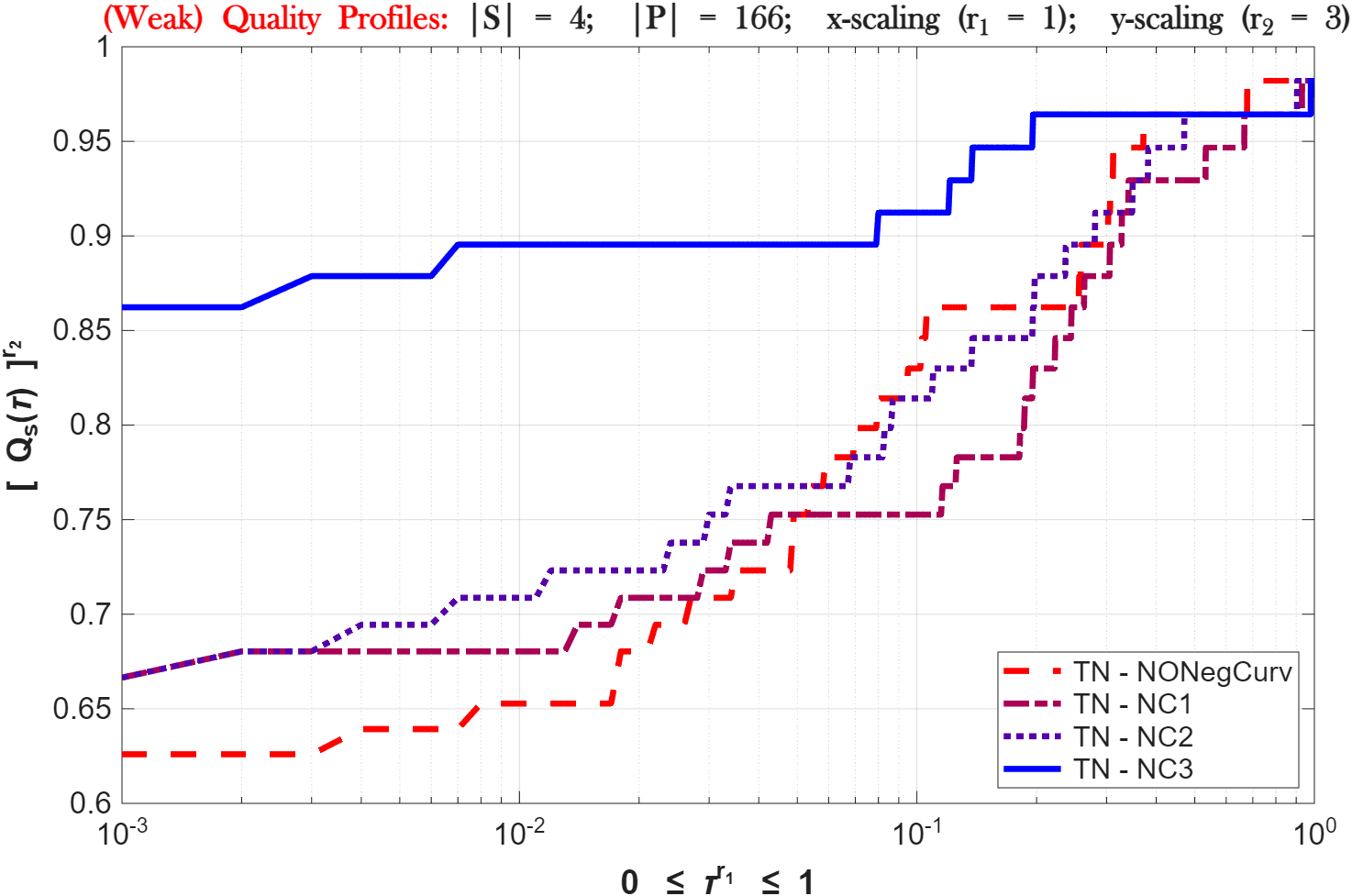}
		\label{fig:6-cases-smooth4.2}
	\end{subfigure}
	\hfill
	\begin{subfigure}{0.69\textwidth}
		\includegraphics[width=\textwidth]{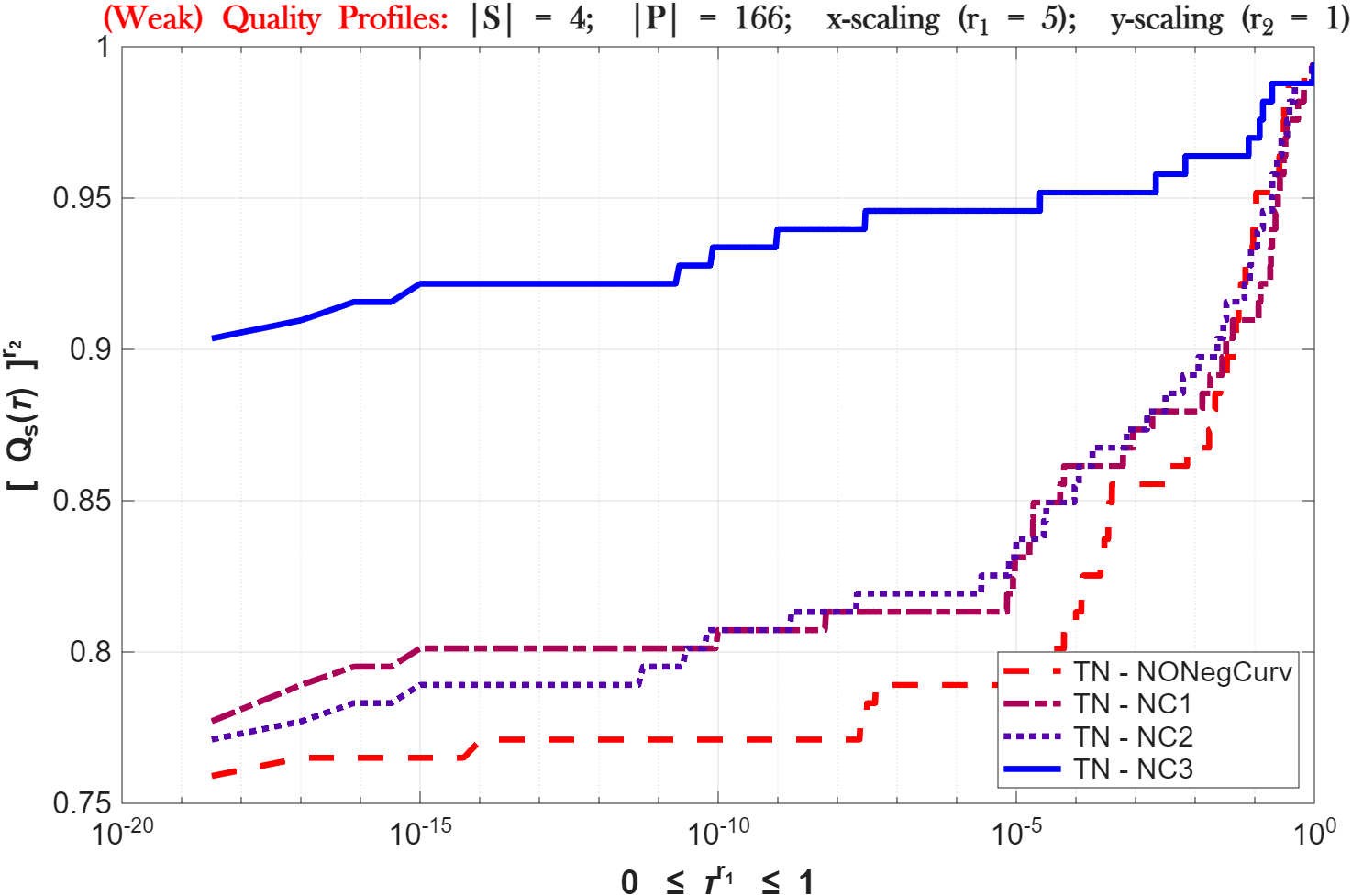}
		\label{fig:6-cases-smoothe5.2}
	\end{subfigure}
	\hfill
	\begin{subfigure}{0.69\textwidth}
		\includegraphics[width=\textwidth]{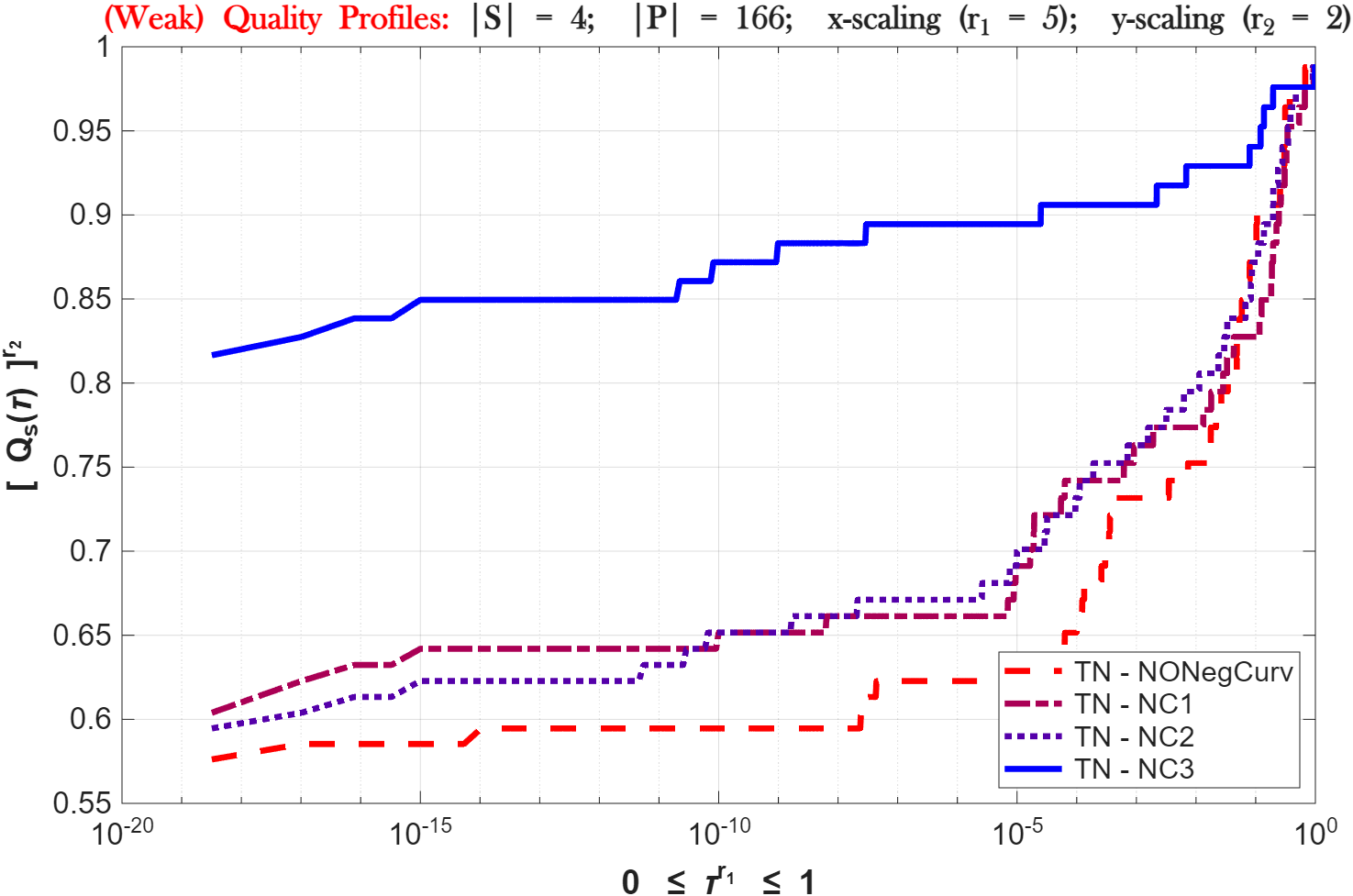}
		\label{fig:6-cases-smooth6.2}
	\end{subfigure}
	\caption{\gianni{Ex}amples of quality profiles for the solvers {{\sf TN--NONegCurv}, {\sf TN--NC1}, {\sf TN--NC2} and {\sf TN--NC3}}, selecting different values of the parameters $r_1$ and $r_2$.}
	\label{fig:6-cases-smooth_b}
\end{figure}
\par
Once more, we strongly highlight the importance of introducing the scaling parameters $r_1$ and $r_2$. Indeed, Figure~\ref{fig:1-case-smooth} shows that the \chri{default} choice $r_1=r_2=1$ may \gianni{attenuate} our capability of discriminating and ranking the quality among solvers, in the relatively small range of accuracy $[10^{-3},1]$. That is again an expected effect, associated to solvers that might have close tracks \chri{over a given region of the abscissa axis (\ie the solvers {\sf TN--NONegCurv}, {\sf TN--NC1} and {\sf TN--NC2})}. Hence, a suitable alternative choice for $r_1$ and $r_2$ may represent a winning strategy for precisely benchmarking multiple codes, on even larger  subsets of the abscissa axis. On this guideline, let us recall again that \chri{such a} precise choice for scaling parameters within performance and data profiles is far from being possible. This is due to the fact that unlike quality profiles (where we always find a unique interval for the abscissa values, say $[0,1]$), performance and data profiles may have to consider different abscissa intervals, depending on the achieved results for the compared solvers. \gianni{This drawback further implies that some thresholds need to be specified to yield performance/data profiles, identifying performance out of an {\em `a priori'} allowed range.} 
Finally, we highlight that the semilog scale may be replaced by a linear one, but {\em we truly discourage it in case a comparison where higher level precision plays a relevant role}.   
\par
\gianni{We remark that as expected,} those plots in Figures~\ref{fig:6-cases-smooth} and \ref{fig:6-cases-smooth_b}, corresponding to $r_1 > 1$,  are conveniently expanded along the abscissa axis, by simply increasing the value of $r_1$. In particular, in case {\tt MATLAB} is used\footnote{The user may download a {\tt MATLAB} code for creating quality profiles at https://github.com/fasano-g/Quality-Profiles.git}, it typically plots by default three/four orders of magnitude when using the semilog scaling. Then the plots in Figures~\ref{fig:6-cases-smooth} and \ref{fig:6-cases-smooth_b} with $r_1 > 1$ straightforwardly report an expansion of the abscissa axis nearby the origin, without sacrificing the clarity or the remaining portion of the plots. \gianni{Quality profiles confirm that} the most effective \chri{method}, among the three codes incorporating negative curvature directions, seems to be {\sf TN--NC3}. We recall that in this last algorithm the negative curvature direction is computed only selecting information associated to the first negative eigenvalue of the diagonal matrix $D_k$. The last fact confirms \gianni{the outcomes} in \cite{fasano.lucidi:2009}, and reveals that on large scale settings \gianni{a too} accurate and expensive computation of a negative curvature direction \gianni{might} be possibly dodged.
\par\medskip	
	
\begin{observation}
\label{observ:6}{\em
	As a further comment, observe that given the set of benchmark \gianni{problems ${\cal P}$}, the area below any plot of a quality profile (and above the abscissa axis) is uniquely associated with the corresponding algorithm. Hence, recalling the rationale behind performance and quality profiles, the larger that area, the better the corresponding algorithm in terms of performance. Furthermore, in quality profiles the range of values of the abscissa is always $[0,1]$, while for performance profiles it can be much different depending on the performance of any algorithm. Hence, for a given set of benchmark functions, the idea of associating a unique positive number to any track (\ie the area below the track) of a quality profile is less naturally extendable also to performance profiles and data profiles. 
	\revnewmax{
	It is worth noting an insightful connection between the geometrical properties of our quality profiles and the recent summary statistics proposed in the literature. As outlined in Observation~\ref{observ:6}, the area below any track of a quality profile is uniquely associated with a specific algorithm's capability, providing a direct, positive scalar metric for global solver ranking within the bounded domain $[0, 1] \times [0, 1]$. This metric strongly echoes the rationale behind the AOCC discussed by Lopez-Ibanez et al. \cite{lopez2024using} as a proxy for the integral of the EAF. Nevertheless, a structural advantage of the quality profiles presented here lies in the flexibility offered by the parameters $r_1$ and $r_2$ in equation \eqref{eq:def_Qs_b}. By altering these exponents, the underlying distribution of the precision samples $\tau$ is non-linearly compressed or expanded. Consequently, the resulting area under the curve can be dynamically weighted toward either low-accuracy intervals ($r_1 = 1$) or high-accuracy thresholds ($r_1 > 1$), a feature that is fundamentally unavailable in standard EAF, ECDF, or AOCC mathematical formulations.
}
}
\end{observation}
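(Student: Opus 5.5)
The plan is to make the statement of Observation~\ref{observ:6} precise and then derive it from Lemma~\ref{lem2}, Lemma~\ref{lem4} and Lemma~\ref{lem:robustness_ratios}. The approach is to write each track as an explicit finite sum of indicator functions, one per test problem, and compute its area in closed form.

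\emph{Step 1: the area is well defined.} For fixed $s \in {\cal S}$ and $p \in {\cal P}$ I would introduce the normalized gap
$$ t_p^s = \frac{f_s^{(p)}(x^\ast) - f_L^{(p)}}{f^{(p)}(x_0^{(p)}) - f_L^{(p)}} \in [0,1], $$
with the following conventions:
\begin{itemize}
\item $t_p^s = 0$ when $f^{(p)}(x_0^{(p)}) = f_L^{(p)}$, since then the inequality in \eqref{eq:def_Qs} holds for every $\tau$;
\item $t_p^s = +\infty$ when $s$ fails on $p$, since then the problem never enters the count.
\end{itemize}
With this notation
$$ Q_s(\tau) = \frac{1}{|{\cal P}|}\sum_{p \in {\cal P}} \mathbf{1}\{t_p^s \le \tau\}. $$
This is a nondecreasing right-continuous step function with at most $|{\cal P}|$ jumps, in accordance with Lemma~\ref{lem2}. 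It takes values in $[0,1]$, so it is Riemann integrable on $[0,1]$. The same holds for the plotted quantity in \eqref{eq:def_Qs_b} viewed as a function of the abscissa $u=\tau^{r_1}$, namely $u \mapsto [Q_s(u)]^{r_2}$. Hence the area $A_s = \int_0^1 [Q_s(u)]^{r_2}\,du$ is a well defined number in $[0,1]$.

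\emph{Step 2: closed form and uniqueness.} Exchanging sum and integral (a layer-cake computation) gives, for $r_2=1$,
$$ A_s = \frac{1}{|{\cal P}|}\sum_{p\in{\cal P}} \bigl(1-\min\{t_p^s,1\}\bigr). $$
This is the average relative reduction achieved by $s$, with failures contributing $0$. It depends only on the values $f_s^{(p)}(x^\ast)$, $f^{(p)}(x_0^{(p)})$ and $f_L^{(p)}$. This is the precise sense in which the area is ``uniquely associated'' with the algorithm. By Lemma~\ref{lem:robustness_ratios}:
\begin{itemize}
\item if $f_L^{(p)}$ is solver independent, $A_s$ depends on $s$ alone;
\item otherwise, $A_s$ depends on $s$ and on the set ${\cal S}$ through $f_L^{(p)}$ only.
\end{itemize}
By Lemma~\ref{lem1}, $A_s$ is also invariant under consistent affine transformations of the objectives.

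\emph{Step 3: ``larger area, better algorithm''.} If $Q_s \ge Q_t$ pointwise, then $A_s \ge A_t$ by monotonicity of the integral, for every $r_1,r_2$; by Lemma~\ref{lem4} this ordering of the tracks is unaffected by the choice of $r_1,r_2$. The closed form also shows that $A_s$ rewards both smaller gaps $t_p^s$ and fewer failures. These are exactly the two features read off a quality profile at small $\tau$ and at $\tau=1$.

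\emph{Expected obstacle.} When the tracks cross, only the weaker claim survives: the area is an aggregate score, and its ranking can change with $r_1$ and $r_2$, since $r_1$ reweights abscissae and $r_2$ is a nonlinear map of the ordinates. Lemma~\ref{lem4} does not cover comparisons of integrals. I would address this by stating the conclusion as a ranking with respect to a chosen weighting. In the same spirit, I would remark that the area must be taken in linear coordinates, because on a semilog axis the area over $[0,1]$ is unbounded near $0$.
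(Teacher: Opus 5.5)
The paper gives no proof of Observation~\ref{observ:6}; it is an informal remark. Your treatment of its first half is sound. The indicator representation of $Q_s$, the layer-cake formula $\int_0^1 Q_s(u)\,du=\frac{1}{|{\cal P}|}\sum_{p}(1-\min\{t_p^s,1\})$, the dependence on ${\cal S}$ only through $f_L^{(p)}$, affine invariance, and monotonicity of the area under pointwise dominance are all correct.

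\textbf{The gap is in the second half.} You define the area as $A_s=\int_0^1[Q_s(u)]^{r_2}\,du$ in the plotted abscissa $u=\tau^{r_1}$. The curve $u\mapsto[Q_s(u)]^{r_2}$ is the same for every $r_1$; changing $r_1$ only changes where it is sampled. So your $A_s$ does not depend on $r_1$ at all. Your argument therefore cannot deliver the statement's central claim that $r_1$ weights the area toward low-accuracy ($r_1=1$) or high-accuracy ($r_1>1$) thresholds. Under your definition that claim is actually false. Your ``Expected obstacle'' paragraph, where $r_1$ ``reweights abscissae'' and can change the ranking, contradicts your own Step~1, and ``for every $r_1$'' in Step~3 is vacuous.

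\textbf{The missing idea} is that the area must be taken with respect to the sampling variable $\tau$. This is what the statement means by the distribution of the precision samples $\tau$, and what an average over equally spaced $\tau$-samples approximates. By the substitution $u=\tau^{r_1}$,
\[
A_s(r_1,r_2)=\int_0^1[Q_s(\tau^{r_1})]^{r_2}\,d\tau=\int_0^1[Q_s(u)]^{r_2}\,w_{r_1}(u)\,du,\qquad w_{r_1}(u)=\tfrac{1}{r_1}\,u^{1/r_1-1}.
\]
Here $w_{r_1}$ is a probability density on $[0,1]$. It is uniform for $r_1=1$, concentrates near $u=0$ for $r_1>1$, and concentrates near $u=1$ for $r_1<1$.

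For $r_2=1$ your layer-cake computation then gives $A_s=\frac{1}{|{\cal P}|}\sum_{p}\bigl(1-\min\{t_p^s,1\}^{1/r_1}\bigr)$. This tends to $Q_s(0)$ as $r_1\to\infty$, so only exact attainment of $f_L^{(p)}$ counts. As $r_1\to0^+$ it tends to the fraction of problems with $t_p^s<1$. This is the precise content of the weighting claim.

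With this definition your caveat about crossing tracks becomes a real phenomenon. Take one solver with $t_p^s=0$ on half the problems and $t_p^s=1$ on the rest: it has area $1/2$ for every $r_1$. Another solver with $t_p^s=10^{-2}$ everywhere has area $0.99$ at $r_1=1$, but about $0.37$ at $r_1=10$. So the ranking by area flips as $r_1$ varies.

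Two smaller points:
\begin{itemize}
\item The area on a semilog axis diverges only when $Q_s(0)>0$. Otherwise $Q_s$ vanishes on some interval $[0,\varepsilon)$ and the integral is finite.
\item If a solver-independent $f_L^{(p)}$ can exceed $f_s^{(p)}(x^\ast)$, then $t_p^s<0$, and your closed form needs clipping at $0$.
\end{itemize}
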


%
\par\medskip
\begin{observation}
{\em
The arrangement in \eqref{eq:def_Qs_b}, with respect to \eqref{eq:def_Qs}, is subject to further generalizations, since the power $\tau^{r_{1}}$ can be replaced by another continuous function $\phi_{1}(\tau)$, such that
\begin{itemize}[aaaaa]
	\item[(a)] $0 \leq \phi_{1}(\tau) \leq 1$;
	\item[(b)] $\phi_{1}$ is strictly increasing;
	\item[(c)] $\phi_{1}(0)= 0$ and $\phi_{1}(1)= 1$.
\end{itemize}
As an example, the choice of $\phi_{1}(\tau)$ may be driven by the functions $\{Q_s(\tau)\}$, first computed using \eqref{eq:def_Qs}, with $s \in {\cal S}$, so that these functions can be better scattered in $[0,1] \times [0,1]$ after replacing $\tau^{r_1}$ with $\phi_{1}(\tau)$ in \eqref{eq:def_Qs_b}, and defining a measure for scattering. A similar alternative arrangement can be conceived for the ordinate axis in the quality profiles, \ie replacing the power $[Q_{s}(\tau)]^{r_{2}}$  by an alternative function $\phi_{2}[Q_{s}(\tau)]$ endowed with properties analogous to (a)--(c). Observe, that in principle, the idea of rewriting \eqref{eq:def_Qs_b} as $$
\phi_{2} \left[Q_s(\tau)\right]= \frac{1}{|{\cal P}|} size \left\{ p \in {\cal P} \ : \ f_s^{(p)}(x^\ast) - f_L^{(p)}  \leq \phi_{1}(\tau) \left[ f^{(p)} ( x_0^{(p)} ) - f_L^{(p)} \right] \right\},
$$
$\tau\in[0 , 1]$,
where both $\phi_1$ and $\phi_{2}$ satisfy the above assumptions and the tracks $\{ \phi_{2} \left[Q_s(\tau)\right]\}_{s \in {\cal S}}$ are better scattered in the area $[0,1] \times [0,1]$, may correspond to find $\phi_1$ and $\phi_{2}$ such that they solve the maximization problem
\begin{equation} \label{eq: maximization.integral}
	\max_{\phi_{1},\phi_{2}} \ \  \int_{0}^{1} \sum_{s,t \in {\cal S}, \atop{s \neq t}}\Bigl\{ \phi_{2}[Q_{s}(\tau)] - \phi_{2}[Q_{t}(\tau)] \Bigr\}^{2} \ \  d\tau.
\end{equation}
This last issue (strictly related to quality profiles) deserves additional attention, since proving the existence and (possibly) the uniqueness of the solution of \eqref{eq: maximization.integral} needs further investigation, with the specific reference to the criticisms raised in \cite{gould2016note}. Indeed, the chance to select $\phi_{1}$ and $\phi_{2}$ so that a clear ranking among solvers is also identified, would be definitely appealing.
}
\end{observation}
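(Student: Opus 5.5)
The plan is to treat the observation as two claims: that replacing $\tau^{r_1}$ and $[Q_s(\tau)]^{r_2}$ by any $\phi_1,\phi_2$ with (a)--(c) keeps the properties already proved, and that the scattering problem \eqref{eq: maximization.integral} is meaningful but needs restrictions before it has a solution. I would first record the elementary fact on which everything rests. A continuous, strictly increasing $\phi$ with $\phi(0)=0$ and $\phi(1)=1$ is a homeomorphism of $[0,1]$ onto itself, so its inverse $\phi^{-1}$ exists and is itself continuous and strictly increasing.

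\emph{Well-posedness and invariance.} Denote by $R_s(\tau)$ the right-hand ratio in the generalized formula, that is, the fraction of problems in ${\cal P}$ satisfying the inequality with threshold $\phi_1(\tau)$. Then $R_s(\tau)=Q_s(\phi_1(\tau))$, with $Q_s$ as in \eqref{eq:def_Qs}. Because $\phi_2$ is invertible, the definition $\phi_2[\,\cdot\,]=R_s(\tau)$ is consistent: the unscaled quantity is recovered as $\phi_2^{-1}(R_s(\tau))$, exactly as $[Q_s]^{r_2}$ is recovered through the power $1/r_2$ in \eqref{eq:def_Qs_b}. I would then verify the earlier lemmas one by one.
\begin{itemize}
\item Lemma~\ref{lem1} (affine invariance) transfers verbatim, since the constant $a>0$ still cancels from both sides of the inequality and $\phi_1(\tau)$ plays the role of $\tau$.
\item Lemma~\ref{lem2} (monotonicity) transfers through (b): if $\tau_1\le\tau_2$ then $\phi_1(\tau_1)\le\phi_1(\tau_2)$, and the same set-inclusion argument applies.
\item Lemma~\ref{lem3} transfers through (c), because $\phi_1(1)=1$; the threshold $\tau_M$ is simply replaced by $\phi_1^{-1}(\tau_M)$.
\item Lemma~\ref{lem4} (ranking invariance) is reproved by replacing $\tau^{1/r_1}$ with $\phi_1^{-1}$ on each subinterval where the ranking of two solvers is constant. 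For the ordinate, $Q_1\ge Q_2$ implies $\phi_2(Q_1)\ge\phi_2(Q_2)$ by (b).
\end{itemize}
So the generalized profiles keep the same rankings and differ from \eqref{eq:def_Qs} only by a reparametrization of the two axes, which is the content of the observation.

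\emph{The scattering problem.} I would show that the objective in \eqref{eq: maximization.integral} is finite and bounded: each squared difference lies in $[0,1]$, so the objective is at most $|{\cal S}|(|{\cal S}|-1)$. Hence the supremum exists. Next I would argue that, over the full class defined by (a)--(c), the supremum need not be attained, which justifies the paper's caution.
\begin{itemize}
\item For the ordinate map, $\phi_2$ can approximate a step function jumping between consecutive values taken by the finitely many piecewise-constant tracks $Q_s$. This pushes the objective toward an extremal value that no continuous strictly increasing $\phi_2$ reaches.
\item For the abscissa map, substitute $\sigma=\phi_1(\tau)$. The objective becomes $\int_0^1 g(\sigma)\,(\phi_1^{-1})'(\sigma)\,d\sigma$, where $g(\sigma)=\sum_{s\neq t}\{\phi_2[Q_s(\sigma)]-\phi_2[Q_t(\sigma)]\}^2$. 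A maximizing sequence can concentrate the weight $(\phi_1^{-1})'$ where $g$ is largest, and this again escapes the admissible class.
\end{itemize}
A remedy I would propose is to restrict $\phi_1,\phi_2$ to a compact parametric family, for instance $\tau^{r_1}$ and $Q^{r_2}$ with $r_1,r_2$ in a closed bounded interval. On such a family the objective is continuous in the parameters, because the tracks are step functions with finitely many jumps and dominated convergence applies. Existence of a maximizer then follows from Weierstrass.

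The main obstacle is this last point. Non-attainment over the unrestricted class is easy to make plausible, but uniqueness fails even in the parametric family whenever two tracks coincide on large sets, because the objective can then be flat in the parameters. I would settle for proving existence under the compactness restriction, and leave uniqueness open, consistently with the remark referring to \cite{gould2016note}.
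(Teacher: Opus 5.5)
The paper gives no proof of this observation. It is a programmatic remark: it takes for granted that (a)--(c) make $\phi_1,\phi_2$ legitimate substitutes for $\tau^{r_1}$ and $[\,\cdot\,]^{r_2}$, and it explicitly leaves existence and uniqueness for \eqref{eq: maximization.integral} open. Your proposal therefore adds content rather than reconstructing an argument, and most of it is sound:
\begin{itemize}
\item The homeomorphism remark is exactly what lets Lemmas~\ref{lem1}--\ref{lem4} carry over. In particular $\phi_1^{-1}(\tau_M)$ is well defined, because $\tau_M\in[0,1]$ by \eqref{equ:better_than_x0}.
\item The bound $|{\cal S}|(|{\cal S}|-1)$ on the objective is correct.
\item The dominated-convergence and Weierstrass argument on a box $r_1,r_2\in[a,b]$ with $a>0$ is correct.
\end{itemize}
Your non-attainment claim actually sharpens the paper's caution. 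Over the full class (a)--(c) a maximizer generically does not exist, so restricting the class is necessary, not merely convenient.

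Three points need repair.

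\textbf{First: the reading of $\phi_2$.} You use two incompatible readings.
\begin{itemize}
\item In the well-posedness paragraph you take the generalized formula literally, so $\phi_2[Q_s(\tau)]=R_s(\tau)=Q_s(\phi_1(\tau))$. Then the integrand of \eqref{eq: maximization.integral} is $\{R_s(\tau)-R_t(\tau)\}^2$, and $\phi_2$ drops out of the problem entirely.
\item In the scattering part you instead apply $\phi_2$ to the step functions of \eqref{eq:def_Qs}, so the track is $\phi_2\circ Q_s\circ\phi_1$.
\end{itemize}
Only the second reading makes $\phi_2$ a decision variable. It also matches how the paper describes the effect of $r_2$, despite the literal form of \eqref{eq:def_Qs_b}. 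Fix this reading once and use it throughout.

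\textbf{Second: the change of variables, and making both non-attainment arguments rigorous.} The substitution producing $(\phi_1^{-1})'$ needs $\phi_1^{-1}$ to be absolutely continuous, which an increasing homeomorphism need not be. Write the objective instead as $\int_0^1 g\,d\mu$, with $\mu$ the image of Lebesgue measure under $\phi_1$.
\begin{itemize}
\item The admissible $\mu$ are exactly the atomless probability measures with full support on $[0,1]$. Hence, for fixed $\phi_2$, the supremum over $\phi_1$ equals the largest value of the step function $g$ on its intervals of constancy. It is attained only if $g$ is a.e. constant. This turns ``concentrating the weight'' into a proof.
\item The ordinate heuristic can be made rigorous in the same spirit. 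Let $v_1<\cdots<v_m$ be the values in $(0,1)$ taken by the tracks. For fixed $\phi_1$, only $y_i=\phi_2(v_i)$ matter, since the values $0$ and $1$ are pinned.
\item The objective is then a convex quadratic in $y$. The admissible vectors are exactly those with $0<y_1<\cdots<y_m<1$, that is, the interior of the polytope $0\le y_1\le\cdots\le y_m\le 1$.
\item A nonconstant convex function cannot attain its maximum over a polytope at an interior point. So the supremum is approached only at step assignments, as you guessed.
\end{itemize}

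\textbf{Third: the cause of non-uniqueness.} Your stated cause is too strong. One pair of coinciding tracks does not flatten the objective when other pairs differ. Flatness needs degenerate situations, for example all tracks coinciding, or tracks that differ only between the values $0$ and $1$, where $\phi_2$ has no effect.
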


\section{Quality Profiles for derivative--free optimization}
\label{sec:quality_profiles_derivative--free}
As for the derivative--free frameworks, where an $n$--dimensional nonsmooth function is minimized, we know that data profiles \cite{more.wild} are an effective tool to measure performances, depending on different precision levels  allowed when running the different solvers. In particular, each data profile basically represents a performance profile among solvers (where in the abscissa axis we consider the number of simplex gradient evaluations) built using partial information. The last partial information is obtained for each solver $s \in {\cal S}$, and represents the computational effort (\ie the number of function evaluations) needed by $s$ to solve different fractions of the test set, assuming that the accuracy required for the solution is given.

As an example, we can plot several data profiles, 
\revnewmax{where each plot is depicted with respect to the number of function evaluations} (one simplex gradient evaluation is equivalent to $n+1$ function evaluations), and the relative accuracy is given by setting a value $\sigma_i \in [0,1]$. In a data profile $\sigma_i$ is used to compute the performance of each solver $s$, on the set of benchmark problems ${\cal P}$, so that (borrowing the taxonomy used in Section \ref{sec:qp})
\begin{equation}
	f_s^{(p)}(x^\ast) - f_L^{(p)}  \leq \sigma_i \left[ f^{(p)} ( x_0^{(p)} ) - f_L^{(p)} \right].   \label{equ:data_profiles}
\end{equation}
This yields a sequence of $\ell$ data profile plots, each of which is associated to a different precision level
        $$ \sigma_1 < \cdots < \sigma_{\ell}. $$

As additional tools in the literature to monitor the progress of solvers for derivative--free optimization, we find more standard {\em convergence plots} and {\em trajectory plots} (see \eg \cite{audet.hare.book}). In the former tool, for each solver $s \in {\cal S}$ the best (feasible) objective function value is plotted vs. the number of function evaluations used by the solver $s$. Conversely, when only a couple of unknowns are present in the problem $p \in {\cal P}$, trajectory plots report the path of the points generated by a solver $s \in {\cal S}$ when overlapped to the plot of the contours of the objective function of $s$: this gives a visual representation of the geometry behind the progress towards the solution.

The basic idea of adopting quality profiles for derivative--free frameworks is that of {\em reversing the rationale behind data profiles}. Indeed, each quality profile is explicitly built in accordance with a relation similar to \eqref{equ:data_profiles}, and where $\sigma_i$ ranges in the entire interval $[0,1]$. Hence, we might conceive a sequence of quality profiles where, from one plot to another, {\em the budget allowed for the computation (by each solver) changes}. Hence, considering \eg the above problem of comparing the performances in terms of function evaluations, we might set the budget levels $nf_1 < \cdots < nf_{\ell}$ and build a quality profile for each value in the sequence $\{nf_i\}$. Thus, as an example, the corresponding first plot in the sequence will report a quality profile allowing for each solver a maximum number of function evaluations equal to $nf_1$.
\par
Now quoting from \cite{more.wild} ``{\em As $[\sigma_i$ in \eqref{equ:data_profiles}$]$ decreases, the accuracy of $f(x)$ as an approximation to $f_L^{(p)}$ increases; the accuracy of $x$ as an approximation to some minimizer depends on the growth of $f$ in a neighborhood of the minimizer}'', that remarks the importance for performance tools to monitor the progress of the compared codes in a neighborhood of minimizers. In particular, the authors of \cite{more.wild} focus on twice continuously differentiable functions and prove in their Theorem 2.1 and Lemma 2.2 that (see relation \cite{more.wild}--(2.8)) the next bound holds (assuming $\nabla^2 f(x)$ is positive definite in a neighborhood of the minimizers)
	$$ \|\nabla f(x)\|_{*} \leq \gamma \sigma_i^{1/2} \left(f^{(p)}(x_0^{(p)}) - f_L^{(p)} \right)^{1/2}, $$ 
being \revmax{`$\|  \cdot  \|_{*} $'} a suitable norm, $\gamma$ a given positive constant, and for any $x$ in a neighborhood of a minimum point $x^*$. Hence, the convergence test (see relation \cite{more.wild}--(2.2))
	$$  f^{(p)}(x) - f_L^{(p)}  \leq \sigma \left(f^{(p)}(x_0^{(p)}) - f_L^{(p)} \right) $$
indicates that smaller and smaller values for the precision coefficient $\sigma$ correspond to eventually approach stationarity on the $p$--th test problem.  

Broadly speaking, in this section we  aim at proposing a somewhat reversed analysis for quality profiles, showing that if the sequence $\{nf_i\}$ is increasing, then the resulting quality profiles correspond to progressively \revnewmax{approaching} 
the convergence of the compared codes to the  minima they detected. In particular, we are committed to show that when dealing with solvers for derivative--free optimization, \gianni{\em their overall computational cost per iteration is uniformly bounded} and does not depend on the choice of the initial point $x_0^{(p)}$, so that increasing $nf_i$ equivalently allows for a larger number of iterations to be performed.

In this regard we carry on our analysis considering simplex derivatives as in \revmax{\cite[Section 9.1]{CSV2009}}. Nevertheless, a similar analysis also holds considering linesearch--based and model--based derivative--free optimization methods. Following this guideline, let us consider the set of $n+1$ interpolation points $\left\{y_0^{(k)}, y_1^{(k)}, \ldots,y_n^{(k)}\right\}$ and the corresponding samples $\left\{f(y_0^{(k)}), f(y_1^{(k)}), \ldots,f(y_n^{(k)})\right\}$ of the objective function $f(y)$, being $x_k \equiv y_0^{(k)}$. Then, \gianni{as well--known}, it is possible to exploit the last two sequences to build an approximate gradient information at the iterate $x_k$, through the simplex gradient\footnote{Observe that the subscript in the nabla operator $\nabla_s$ stands for {\em simplex} and does not refer to the solver $s$.} $\nabla_s f(x_k)$ given by
	$$ \nabla_s f(x_k) = \left[y_1^{(k)} - y_0^{(k)} \cdots \ y_n^{(k)} - y_0^{(k)} \right]^{-T}
	                     \left[ 
	                     \begin{array}{c}
	                     	f(y_1^{(k)})-f(y_0^{(k)})	\\
	                     	 \vdots			\\
	                     	f(y_n^{(k)})-f(y_0^{(k)})
	                     \end{array} 
	                     \right] $$
(provided that $\left[y_1^{(k)} - y_0^{(k)} \cdots \ y_n^{(k)} - y_0^{(k)} \right]^T$ is nonsingular), and we can define the quantity
	$$ \Delta_k = \max_{1 \leq i \leq n } \left\{ \left\| y_i^{(k)} - y_0^{(k)} \right\|_2 \right\}. $$
Then, by \revmax{\cite[Chapter 6]{CSV2009}} we know that there exists a constant value $\kappa_g > 0$ such that the computation of $\nabla_s f(x_k)$ and the fulfillment of the condition
	$$ \| \nabla f (x_k) - \nabla_s f (x_k) \|_2 \leq \kappa_g \Delta_k $$
can be ensured for any value of $\Delta_k > 0$, within a finite number of so called {\em improvement steps}, \ie  within a {\em finite uniformly bounded} number $N$ of function evaluations\footnote{This achievement is obtained by applying at most $n$ {\em improvement steps} to the sample set $\left\{y_0^{(k)}, y_1^{(k)}, \ldots,y_n^{(k)}\right\}$, where any improvement step reduces to apply the algorithms in \revmax{\cite[Chapter 6]{CSV2009}} in order to possibly replace one of the points in $\left\{y_0^{(k)}, y_1^{(k)}, \ldots,y_n^{(k)}\right\}$.}. Hence, $N$ is independent of both the point $x_k$ and the sample set $\left\{y_0^{(k)}, y_1^{(k)}, \ldots,y_n^{(k)}\right\}$.

Furthermore, at the point $x_k$ the simplex gradient method updates the next iterate $x_{k+1}$ so that the standard (sufficient decrease) Armijo--like condition ($0 < \eta < 1$) is fulfilled
\begin{equation}
	 f(x_{k+1}) = f \left(x_k - \alpha_k \nabla_s f (x_k)\right) \leq f (x_k) - \eta \alpha_k \| \nabla_s f (x_k)\|^2, \qquad {\rm for \ some \ } \alpha_k > 0.		\label{equ:armijo-type} 
\end{equation}
It is possible to prove (see  \revmax{\cite[Chapter 9]{CSV2009}}, Lemma 9.1 and Lemma 9.2) that, as long as the gradient $\nabla f(x)$ is Lipschitz continuous on a suitable compact set containing the level set
	$$ {\cal L}(x_0) = \{x \in \re^n \ : \ f (x) \leq f (x_0) \}, $$
then the computation of $\alpha_k$ in \eqref{equ:armijo-type} similarly requires a {\em finite number of improvement steps}, \ie a finite number $F$ of function evaluations, at any iteration.

Hence, any iteration of the {\em simplex gradient} method will require at most the finite number
\begin{equation}
	N + F		\label{equ:max_feval_x_iteration} 
\end{equation} 
of function evaluations, in order to generate the next iterate \gianni{and fulfill the conditions
	$$ f(x_0) < f(x_1) < \cdots $$
}This definitely motivates our analysis, through the introduction of a sequence of quality profiles, when dealing with  derivative--free optimization frameworks. Indeed, creating a sequence of quality profiles, corresponding to an increasingly larger maximum allowed number of function evaluations for the compared codes, can comparatively gauge the progress of each code towards the final minimum point associated to each test problem, as allowed by the budget of function evaluations. In fact, by \eqref{equ:max_feval_x_iteration}, the larger the allowed number of function evaluations, the larger the number of iterations that each solver for derivative--free optimization performs (before optimality can be outreached). Equivalently, increasing the budgets $nf_1, \ldots, nf_{\ell}$ allows to  realize, through a comparison among the corresponding quality profiles, how effective and fast is each solver with respect to the others (see Figures \ref{fig:1-case-dfo_1000} -- \ref{fig:1-case-dfo_250}).     
  
We strongly remark \gianni{once more} that, by the respective definitions of data profiles and quality profiles for derivative--free optimization problems, since they are respectively built focusing on discrete precision levels and on the number of function evaluations, on the overall they can definitely infer different and complementary conclusions on the solvers.

\subsection{Application of the quality profiles to derivative--free optimization}
As for a numerical \revnewmax{experiment} adopting quality profiles for derivative--free optimization, we use the same selection of {\sf CUTEst} problems as in Section~2 of the recent paper \cite{shi.2023}. The latter paper is devoted to investigate the practical performance of finite-difference-based derivative--free optimization methods, and in the unconstrained case 73 problems have been selected varying the dimension, whenever possible, up to $n=300$ variables. The resulting set ${\cal P}$ of test problems consists of 145 problems included in Table~\ref{tab:problemidfo}.
\begin{table}[htbp]
	\begin{tabular}{p{1.4cm}r|p{1.4cm}r|p{1.4cm}r|p{1.4cm}r|p{1.4cm}r}
		\textit{Problem} & $n$ & \textit{Problem} & $n$ & \textit{Problem} & $n$ & \textit{Problem} & $n$ & \textit{Problem} & $n$ \\
		\hline
		AIRCRFTB           &  8 &
		ALLINITU           &  4 &
		ARWHEAD            & 100&
		BARD               &  3 &
		BDQRTIC            & 100 \\
		BIGGS3             & 6 &
		BIGGS5             & 6 &
		BIGGS6             & 6 &
		BOX2               & 3 &
		BOX3               & 3 \\
		BRKMCC             &  2&
		BROWNAL            & 10&
		BROWNAL            &100&
		BROWNAL            &200&
		BROWNDEN             &4 \\
		CLIFF               & 2&
		CRAGGLVY            & 4&
		CRAGGLVY            &10&
		CRAGGLVY            &50&
		CRAGGLVY           &100 \\
		CUBE                & 2&
		DENSCHNA            & 2&
		DENSCHNB            & 2&
		DENSCHNC            & 2&
		DENSCHND            & 3\\
		DENSCHNE             &3&
		DENSCHNF             &2&
		DIXMAANA            &15&
		DIXMAANA            &90&
		DIXMAANA           &300\\
		DIXMAANB            &15&
		DIXMAANB            &90&
		DIXMAANB           &300&
		DIXMAANC            &15&
		DIXMAANC            &90 \\
		DIXMAANC           &300&
		DIXMAAND            &15&
		DIXMAAND            &90&
		DIXMAAND           &300&
		DIXMAANE            &15\\
		DIXMAANE            &90&
		DIXMAANE           &300&
		DIXMAANF            &15&
		DIXMAANF            &90&
		DIXMAANF           &300\\
		DIXMAANG            &15&
		DIXMAANG            &90&
		DIXMAANG           &300&
		DIXMAANH            &15&
		DIXMAANH            &90\\
		DIXMAANH           &300&
		DIXMAANI            &15&
		DIXMAANI            &90&
		DIXMAANI           &300&
		DIXMAANJ            &15 \\
		DIXMAANJ            &90&
		DIXMAANJ           &300&
		DIXMAANK            &15&
		DIXMAANK            &90&
		DIXMAANK           &300\\
		DIXMAANL            &15&
		DIXMAANL            &90&
		DIXMAANL           &300&
		DQRTIC              &10&
		DQRTIC              &50 \\
		DQRTIC             &100&
		EDENSCH             &36&
		EIGENALS            & 6&
		EIGENALS           &110&
		EIGENBLS             &6\\
		EIGENBLS           &110&
		EIGENCLS            &30&
		ENGVAL1              &2&
		ENGVAL1             &50&
		ENGVAL1            &100\\
		EXPFIT               &2&
		FLETCBV3            &10&
		FLETCBV3           &100&
		FLETCHBV            &10&
		FLETCHBV           &100\\
		FREUROTH             &2&
		FREUROTH            &10&
		FREUROTH            &50&
		FREUROTH           &100&
		GENROSE              &5\\
		GENROSE             &10&
		GENROSE            &100&
		GULF                 &3&
		HAIRY                &2&
		HELIX                &3\\
		JENSMP               &2&
		KOWOSB               &4&
		MEXHAT               &2&
		MOREBV              &10&
		MOREBV              &50\\
		MOREBV             &100&
		NCB20B              &21&
		NCB20B              &22&
		NCB20B              &50&
		NCB20B             &100\\
		NCB20B             &180&
		NONDIA              &10&
		NONDIA              &20&
		NONDIA              &30&
		NONDIA              &50\\
		NONDIA              &90&
		NONDIA             &100&
		NONDQUAR           &100&
		OSBORNEA            &5&
		OSBORNEB            &11\\
		PENALTY1             &4&
		PENALTY1            &10&
		PENALTY1            &50&
		PENALTY1           &100&
		PFIT1LS              &3\\
		PFIT2LS              &3&
		PFIT3LS              &3&
		PFIT4LS              &3&
		QUARTC              &25&
		QUARTC             &100\\
		SINEVAL              &2&
		SINQUAD              &5&
		SINQUAD             &50&
		SINQUAD            &100&
		SISSER               &2\\
		SPARSQUR            &10&
		SPARSQUR            &50&
		SPARSQUR           &100&
		TOINTGSS            &10&
		TOINTGSS            &50\\
		TOINTGSS           &100&
		TQUARTIC             &5&
		TQUARTIC            &10&
		TQUARTIC            &50&
		TQUARTIC           &100\\
		TRIDIA              &10&
		TRIDIA              &20&
		TRIDIA              &30&
		TRIDIA              &50&
		TRIDIA             &100\\
		WATSON              &12&
		WATSON              &31&
		WOODS                &4&
		WOODS              &100&
		ZANGWIL2             &2\\
		\hline
	\end{tabular}
	\caption{Selection of {\sf CUTEst} problems adopted for experimenting the quality profiles in derivative--free optimization.}\label{tab:problemidfo}
\end{table}

\revnewmax{We choose some well established and publicly available codes from the literature for benchmarking different derivative--free solvers by using the quality profiles.}
Our selection criteria basically follow the ones already listed at the beginning of Section~\ref{sec:results-first}.
\par
The list of the selected codes is given below, and includes: the code name, 
the acronym,
the reference paper[s] and a brief description.
\revnewmax{All codes were executed using their respective default parameter settings.}
Moreover, for all codes we set the budget levels $nf_1 < nf_2 < nf_3 < nf_4$ 
with $f_1=250$, $f_2=500$, $f_3=750$ and $f_4=1000$, and build a quality profile for each value in the sequence $\{nf_i\}$.
\begin{itemize}
	\item {\sf NEWUOA} ({\sf NEWUOA}) \cite{powell.06,powell.08}: it is one of the model--based algorithms proposed by M.J.D. Powell. At each iteration, the algorithm establishes a model function by quadratic interpolation, and then minimizes it within a trust region.
	\\
	\item {\sf Coordinate search} ({\sf COORD}) \cite{polak.71}:  it is the
	simplest directional direct--search method based on a very simple search along the coordinate directions.
	\\
	\item {\sf Compass search} ({\sf COMPASS)}\cite{dolan.99}: it is a revisited version of the previous method, using coordinate directions.
	\\
	\item {\sf Hooke \& Jeeves} ({\sf H\&J}) \cite{hooke.61}: it is the original pattern--search method due to Hooke and Jeeves.
	\\
	\item {\sf Simplex search} ({\sf SH\&H}) \cite{spendley.62}: this code implements a sequence of experimental designs each in the form of a regular or irregular simplex.
    \\	
	\item {\sf Nelder \& Mead} ({\sf NM}) \cite{nelder.65}: it is based on the classical Nelder and Mead method revisited according to Lagarias et al.  \cite{lagarias.98}.
	\\
	\item {\sf Sequential Multidirectional Search} ({\sf  SMD}) \cite{torczon.98}: it is a sequential implementation of Multidirectional search algorithm originally proposed by Torczon for parallel machines.
\end{itemize}
{\sf COORD}, {\sf COMPASS} and {\sf H\&J } belong to the class of \textit{pattern--search} algorithms; {\sf SH\&H}, {\sf NM} and {\sf SMD} are classified as \textit{simplex search} methods ({\sf SMD} can be also seen as pattern--search method). Finally, {\sf NEWUOA} is a model--based method.
\par
With the exception of {\sf NEWUOA}, all the other codes are taken from the {\sf Direct Search} suite of C++ derivative--free codes for unconstrained 
optimization written by Dolan, Gurson, Shepherd, Siefert, Torczon 
and Yates publicly available\footnote{\tt https://www.cs.wm.edu/$\tilde{\ } $va/software/DirectSearch/direct\_code/}.
\par
The complete set ${\cal S}$ of the considered solvers is given by
$$
{\cal S}=\{\text{{\sf NEWUOA}}, \ \text{{\sf COORD}},  \  \text{{\sf COMPASS}},  \ \text{{\sf H\&J}},  \ \text{{\sf SH\&H}},  \ \text{{\sf NM}}, \text{{\sf SMD}} \},$$
and to respect the taxonomy by Dolan, Gurson, Shepherd, Siefert, Torczon 
and Yates, in the labels of the Figures \ref{fig:1-case-dfo_1000} -- \ref{fig:1-case-dfo_250} they are reported as (numbering is taken from the {\sf Direct Search} suite) 
$$
{\cal S}=\{\text{{\sf NEWUOA}},  \ \text{{\sf ALG1}},  \  \text{{\sf ALG2}},  \ \text{{\sf ALG4}},  \ \text{{\sf ALG6}},  \ \text{{\sf ALG7}},  \ \text{{\sf ALG8}} \}.$$

Each of the Figures \ref{fig:1-case-dfo_1000} -- \ref{fig:1-case-dfo_250} confirms the importance of properly selecting the parameter $r_1$ in \eqref{eq:def_Qs_b}, so that when zooming close to $\tau^{r_1}=0$ (upper picture) or $\tau^{r_1}=1$ (lower picture) may reveal further details to precisely benchmark different solvers, with respect to the {\em neutral} choice $r_1=1$ (central picture).

People in the community of derivative--free optimization know well that the solver \text{\sf NEWUOA} may typically yield more accurate solutions, when compared to pattern--search--based codes.
\revnewmax{As shown in  Figures \ref{fig:1-case-dfo_1000} -- \ref{fig:1-case-dfo_250}, this holds true even for a relatively modest budget of $250 \cdot n$ function evaluations.}
Moreover, regardless of the allowed budget, the model--based code \text{\sf NEWUOA} yields improvements at a fairly constant rate (see the upper picture in each of the Figures \ref{fig:1-case-dfo_1000} -- \ref{fig:1-case-dfo_250}). Conversely, pattern--search--based algorithms show a faster initial progress (say when $10^{-5} \leq \tau^{r_1} \leq 1$), and eventually struggle to improve their results when higher precision is sought.

\revnewmax{The role of the function evaluation budget is best observed by comparing the central and or upper panels in Figures~\ref{fig:1-case-dfo_1000} -- \ref{fig:1-case-dfo_250}.}
In fact, each of the tracks in the central (upper) picture of Figure \ref{fig:1-case-dfo_1000}, for each of the values in the abscissa axis (\ie the precision), tends to give a higher value with respect to the corresponding picture in Figure \ref{fig:1-case-dfo_750}. A similar conclusion holds when comparing Figure \ref{fig:1-case-dfo_750} with Figure \ref{fig:1-case-dfo_500}, or Figure \ref{fig:1-case-dfo_500} with Figure \ref{fig:1-case-dfo_250}, revealing once more the complementarity between the outcomes of data profiles and quality profiles. In particular, in each data profile the precision is first set, and then on the abscissa axis the plot is reported with respect to the computational budget (\ie the equivalent number of function/simplex evaluations). On the contrary, in quality profiles for derivative--free optimization we first set the budget for solvers, and then we benchmark codes with respect to the precision of the outcomes. 

Finally, observe that Figures \ref{fig:1-case-dfo_1000} -- \ref{fig:1-case-dfo_250} also confirm an additional couple of conclusions: 1) the semilog scale is definitely advisable also when dealing with derivative--free optimization; 2) it is clear from the lower pictures that values of $r_1$ smaller than 1 allow for zooming close to the precision level $\tau^{r_1}=1$ (this appears to be one of those few cases where both the semilog and the linear scale for the abscissa axis can be fruitfully exploited). 

\begin{figure}[htbp]
	\centering
	\begin{subfigure}{0.69\textwidth}
		\centering
		\includegraphics[width=\textwidth]{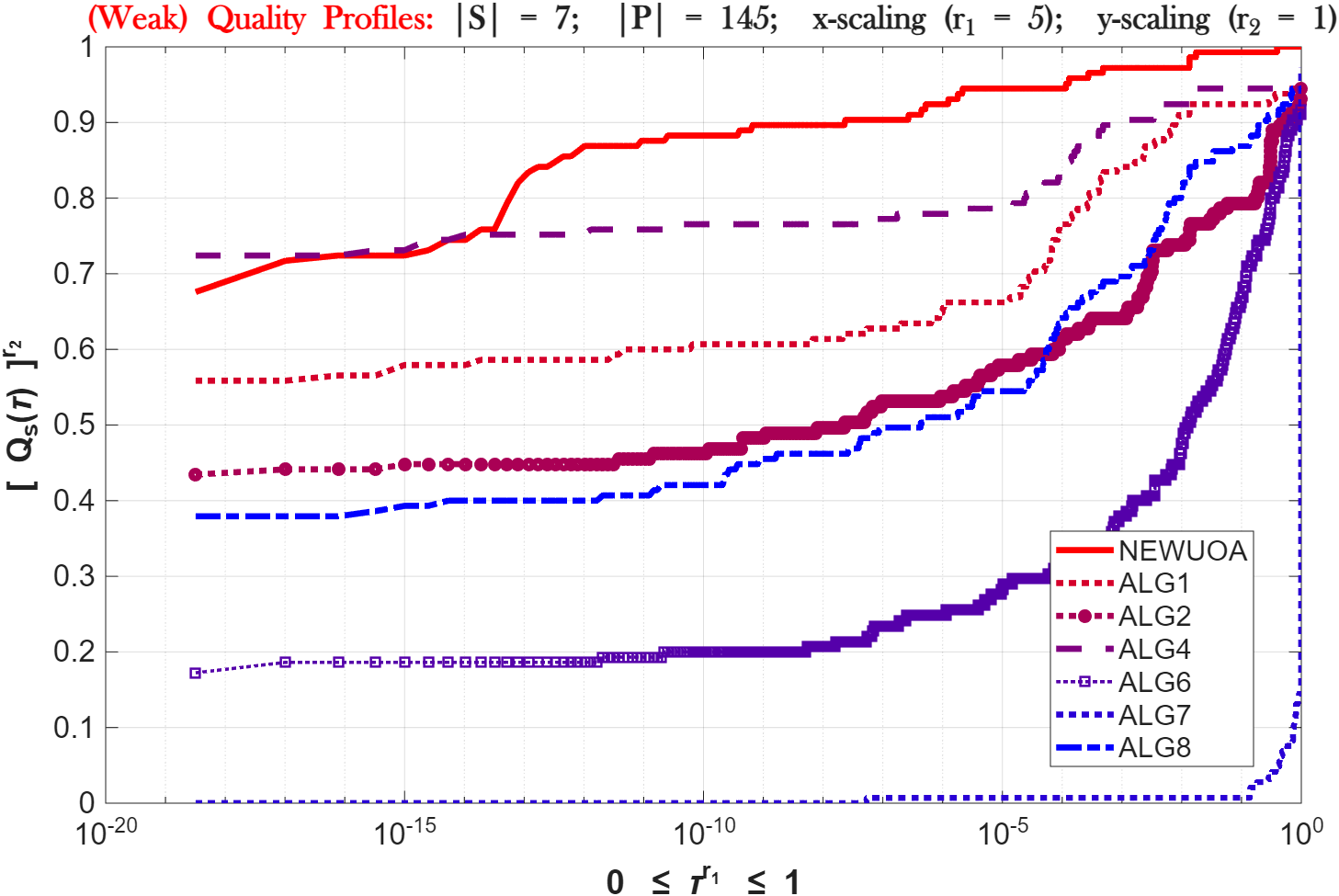} 
		\label{fig:1-case-dfo_5-1_1000}
	\end{subfigure}
	\hfill
	\begin{subfigure}{0.69\textwidth}
		\centering
		\includegraphics[width=\textwidth]{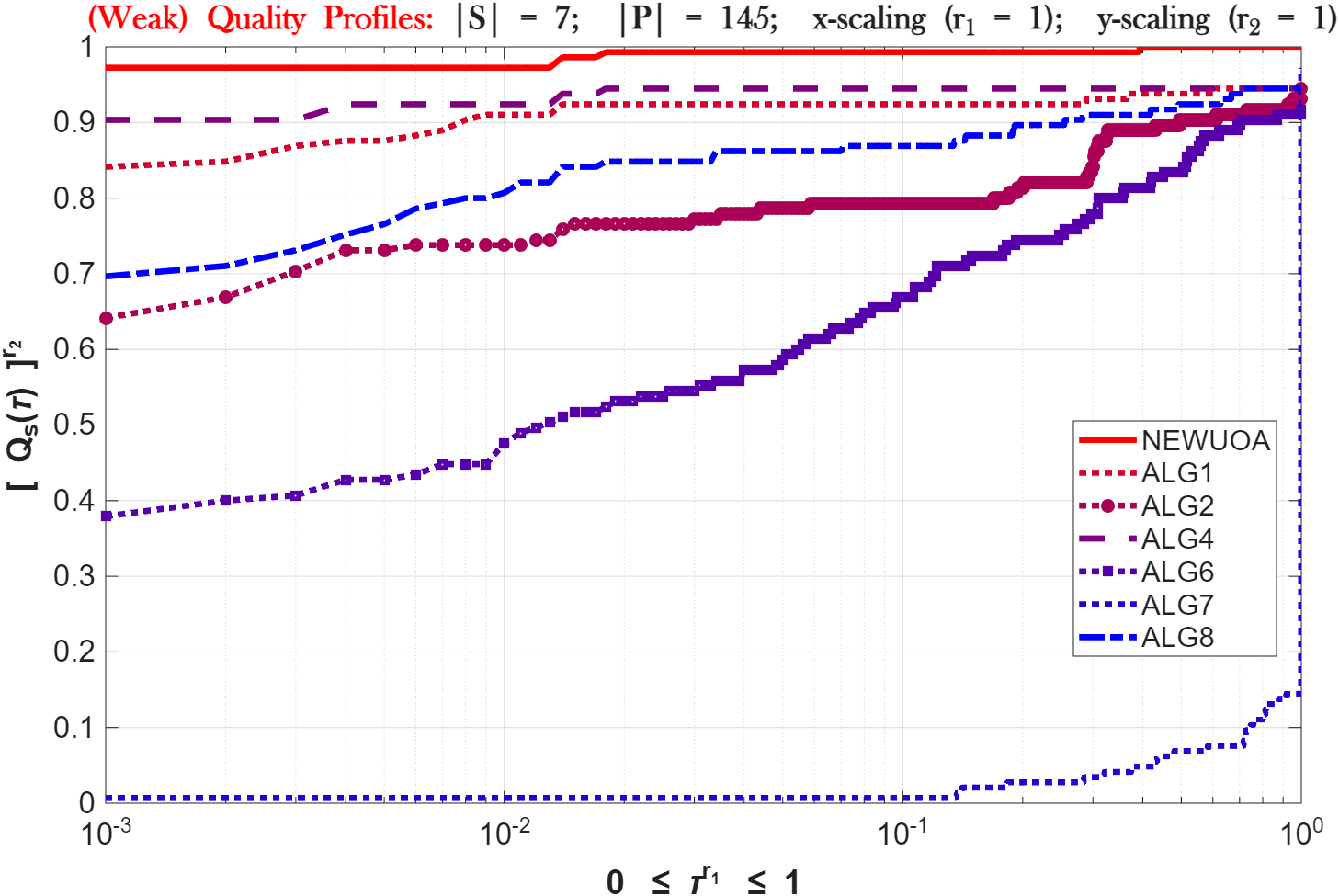} 
		\label{fig:1-case-dfo_1-1_1000}
	\end{subfigure}
	\hfill
	\begin{subfigure}{0.69\textwidth}
		\centering
		\includegraphics[width=\textwidth]{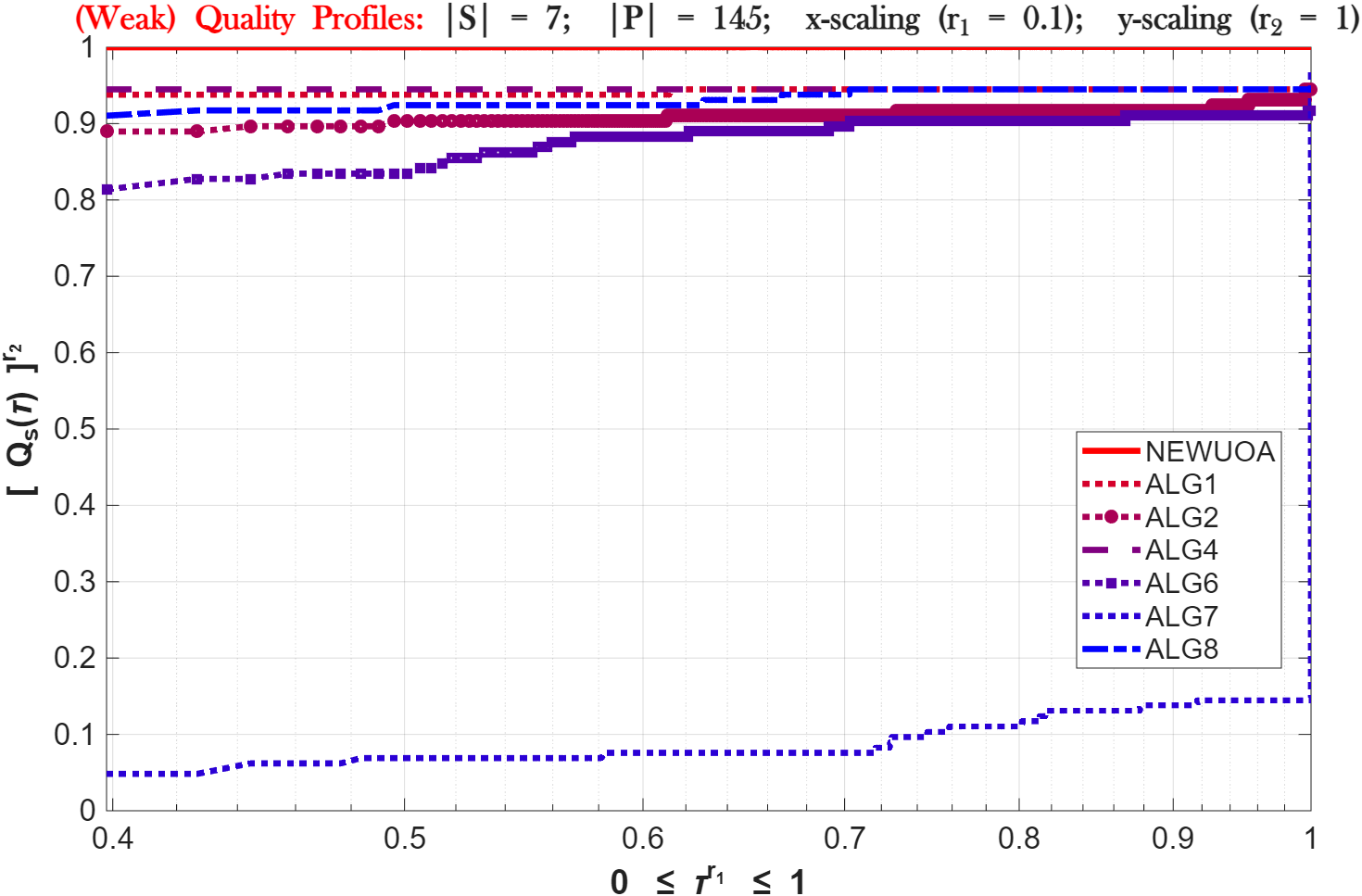} 
		\label{fig:1-case-dfo_01-1_1000}
	\end{subfigure}
	\caption{Example of weak quality profiles for the derivative--free solvers in ${\cal S}$. The allotted maximum budget for solvers is set to $1000 \cdot n$ {\em function evaluations}.}
	\label{fig:1-case-dfo_1000}
\end{figure}

\begin{figure}[htbp]
	\centering
	\begin{subfigure}[b]{0.7\textwidth}
		\centering
		\includegraphics[width=\textwidth]{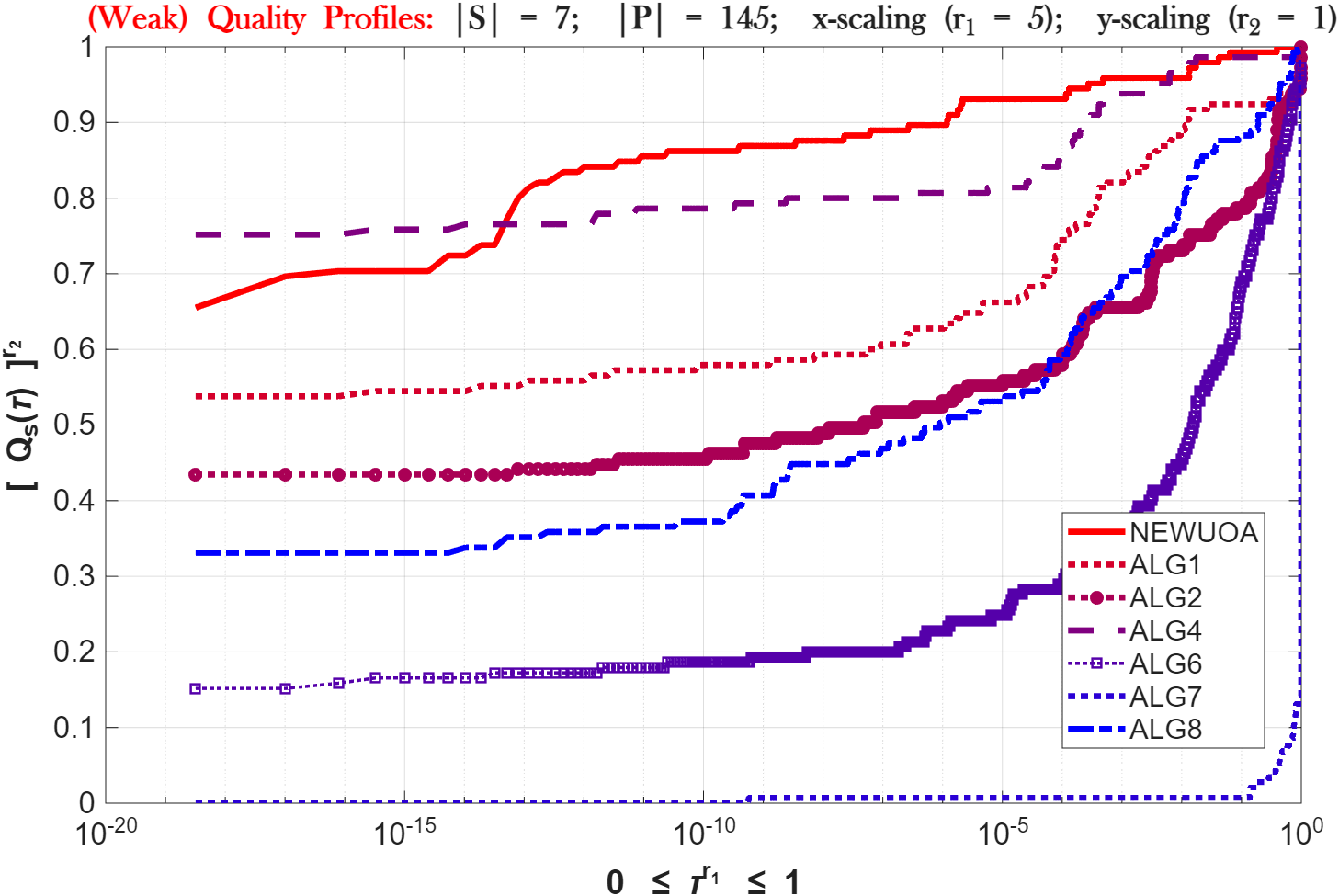} 
		\label{fig:1-case-dfo_5-1_750}
	\end{subfigure}
	\hfill
	\begin{subfigure}[b]{0.7\textwidth}
		\centering
		\includegraphics[width=\textwidth]{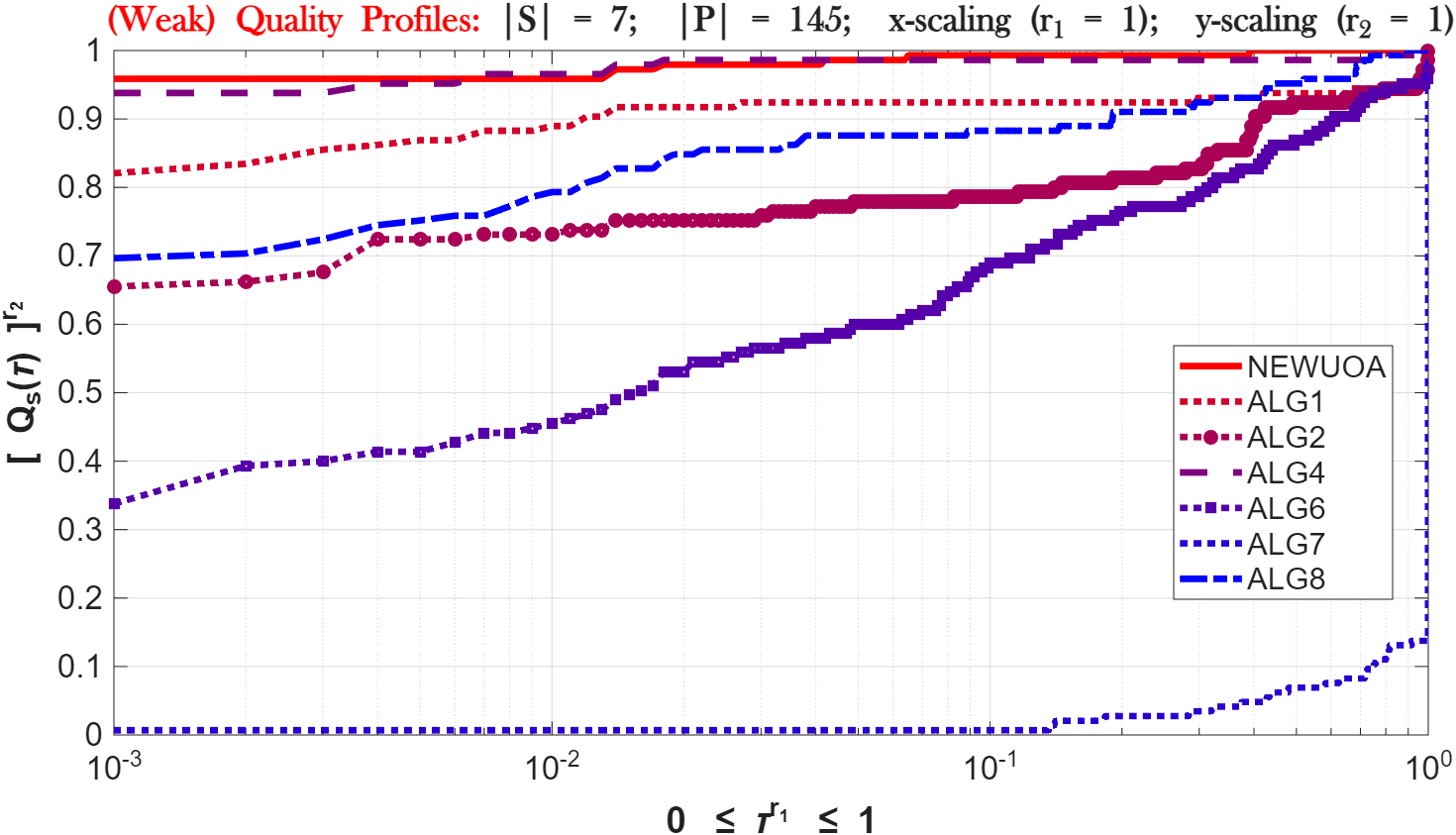} 
		\label{fig:1-case-dfo_1-1_750}
	\end{subfigure}
	\hfill
	\begin{subfigure}[b]{0.7\textwidth}
		\centering
		\includegraphics[width=\textwidth]{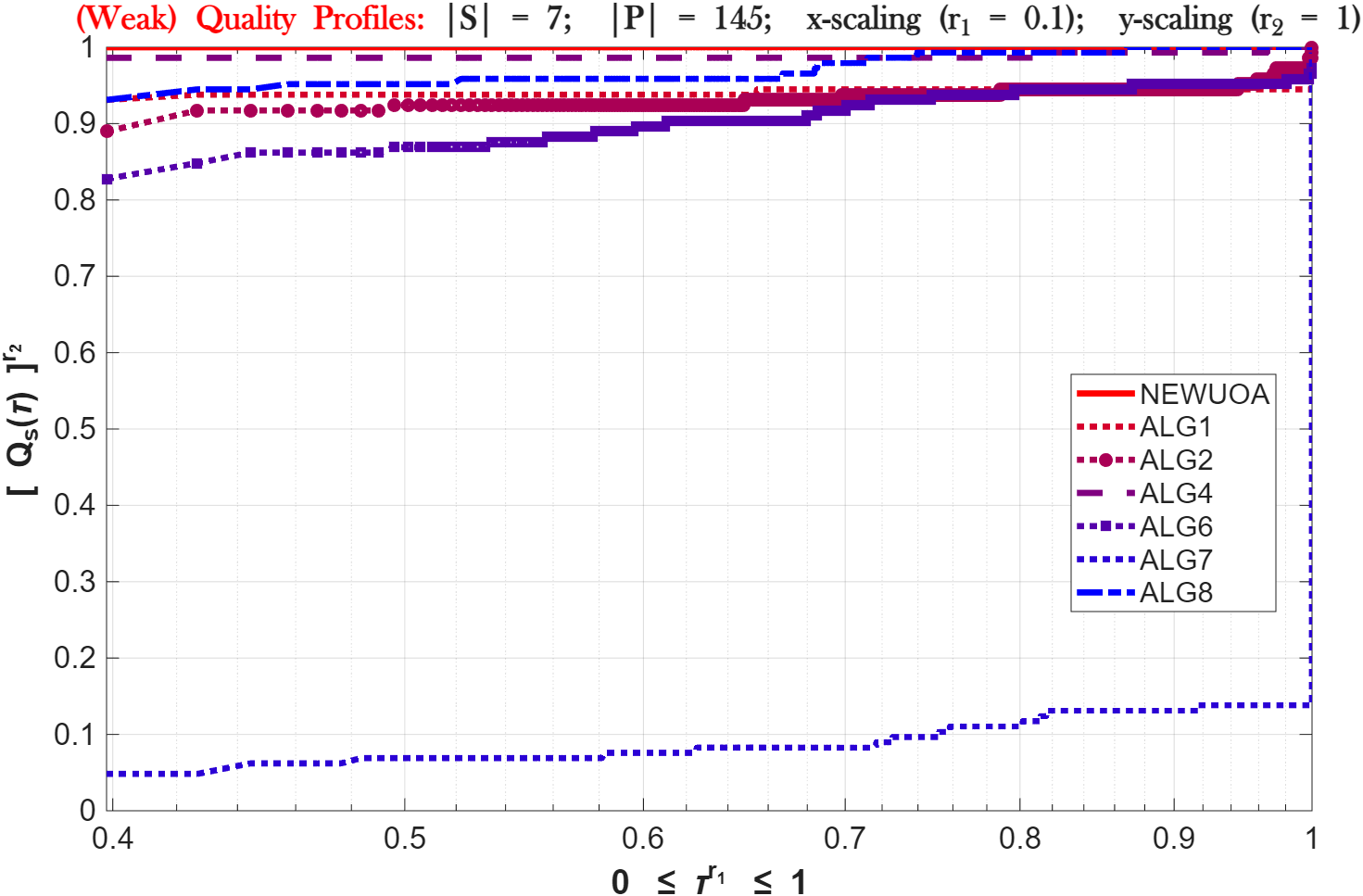} 
		\label{fig:1-case-dfo_01-1_750}
	\end{subfigure}
	\caption{Example of weak quality profiles for the derivative--free solvers in ${\cal S}$. The allotted maximum budget for solvers is set to $750 \cdot n$ {\em function evaluations}.}
	\label{fig:1-case-dfo_750}
\end{figure}

\begin{figure}[htbp]
	\centering
	\begin{subfigure}[b]{0.68\textwidth}
		\centering
		\includegraphics[width=\textwidth]{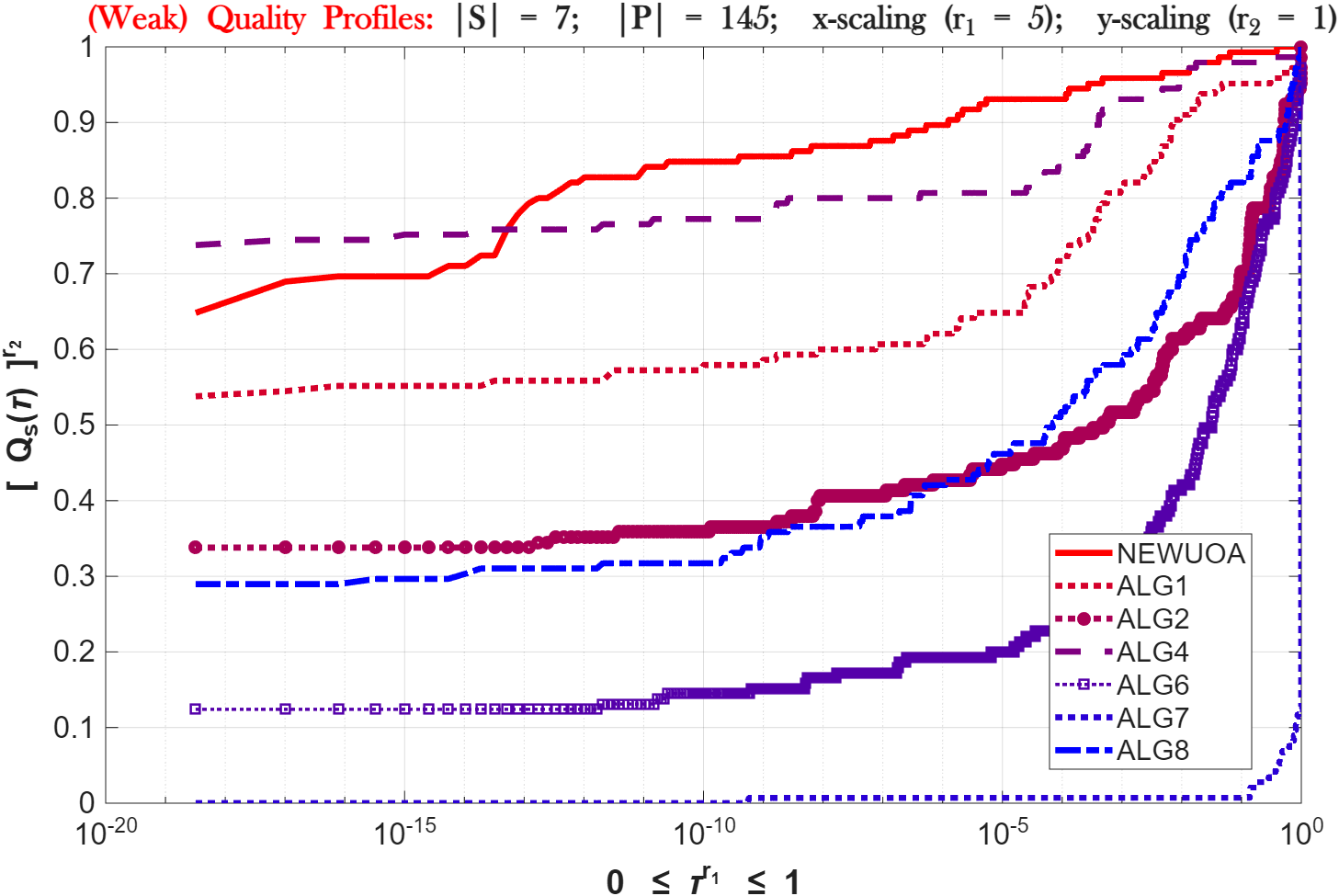} 
		\label{fig:1-case-dfo_5-1_500}
	\end{subfigure}
	\hfill
	\begin{subfigure}[b]{0.68\textwidth}
		\centering
		\includegraphics[width=\textwidth]{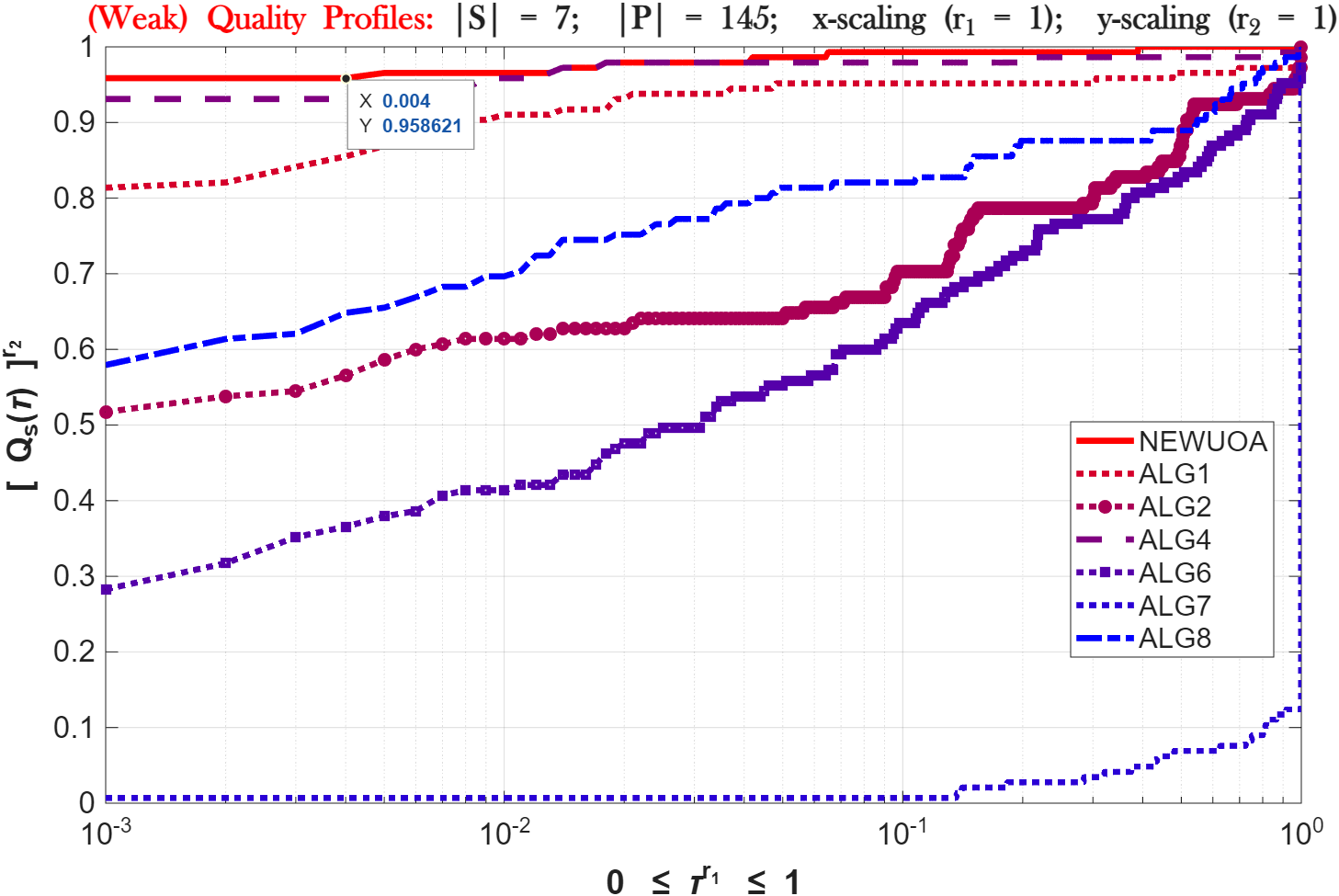} 
		\label{fig:1-case-dfo_1-1_500}
	\end{subfigure}
	\hfill
	\begin{subfigure}[b]{0.68\textwidth}
		\centering
		\includegraphics[width=\textwidth]{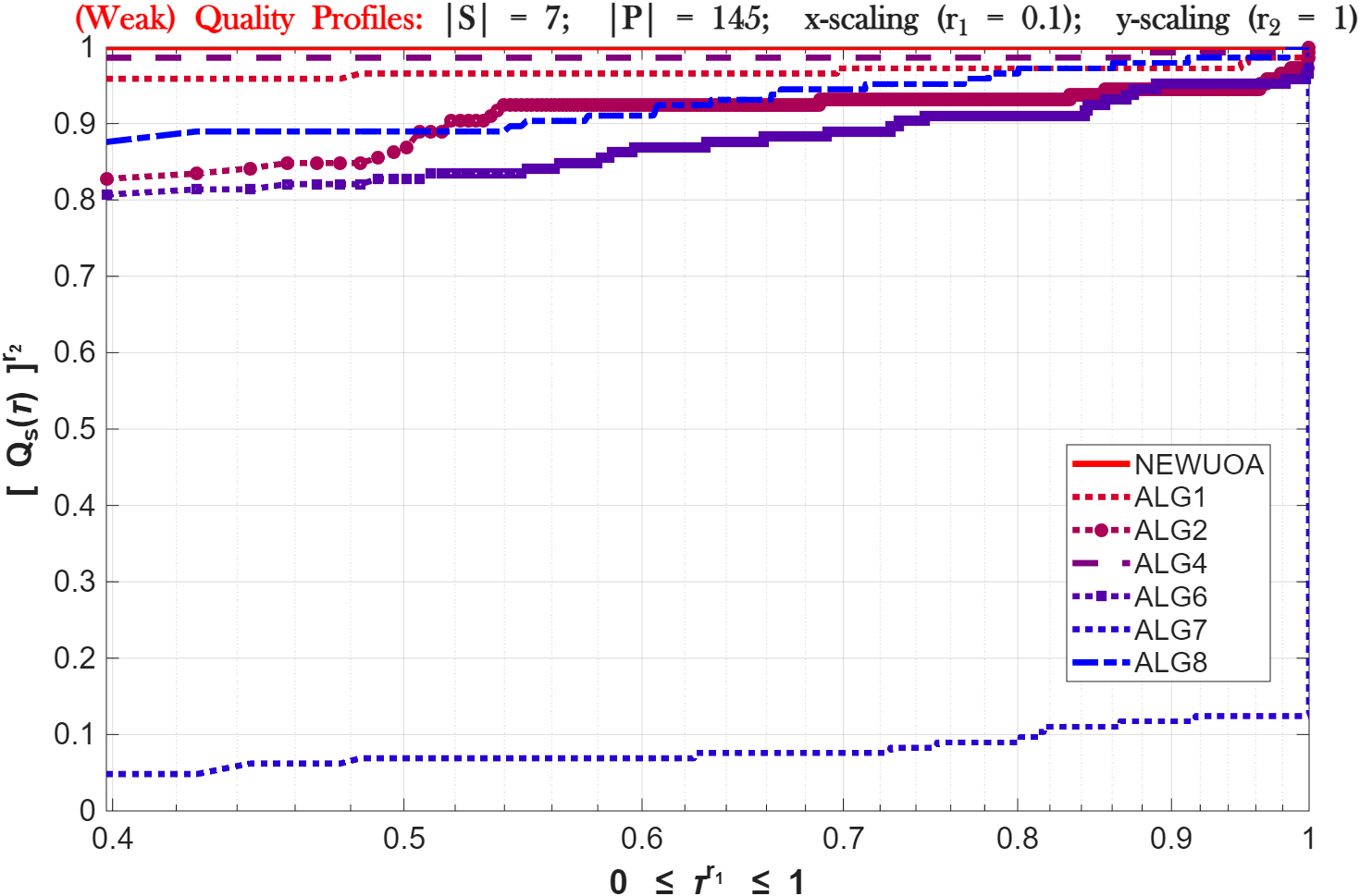} 
		\label{fig:1-case-dfo_01-1_500}
	\end{subfigure}
	\caption{Example of weak quality profiles for the derivative--free solvers in ${\cal S}$. The allotted maximum budget for solvers is set to $500 \cdot n$ {\em function evaluations}.}
	\label{fig:1-case-dfo_500}
\end{figure}

\begin{figure}[htbp]
	\centering
	\begin{subfigure}[b]{0.7\textwidth}
		\centering
		\includegraphics[width=\textwidth]{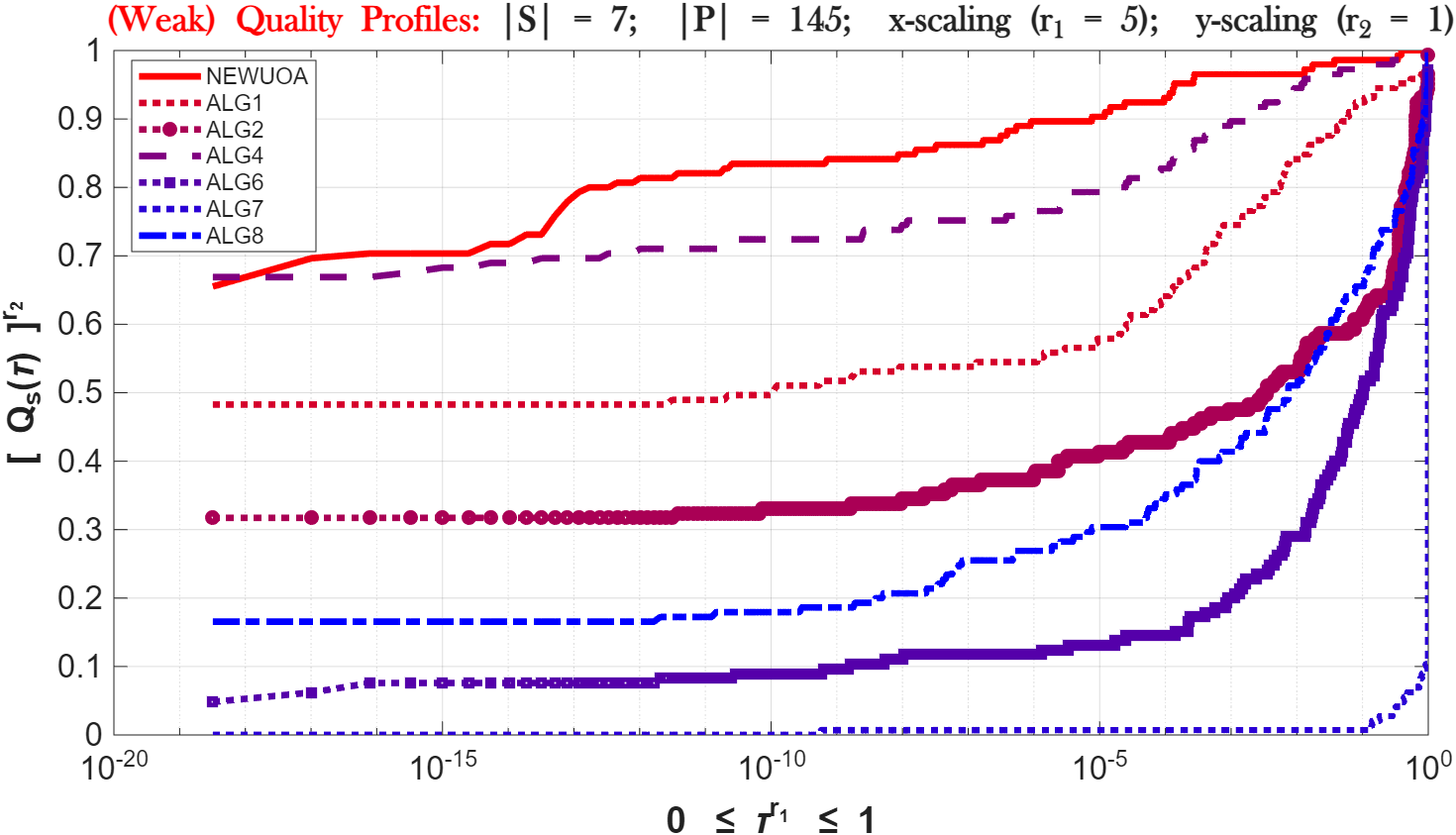} 
		\label{fig:1-case-dfo_5-1_250}
	\end{subfigure}
	\hfill
	\begin{subfigure}[b]{0.7\textwidth}
		\centering
		\includegraphics[width=\textwidth]{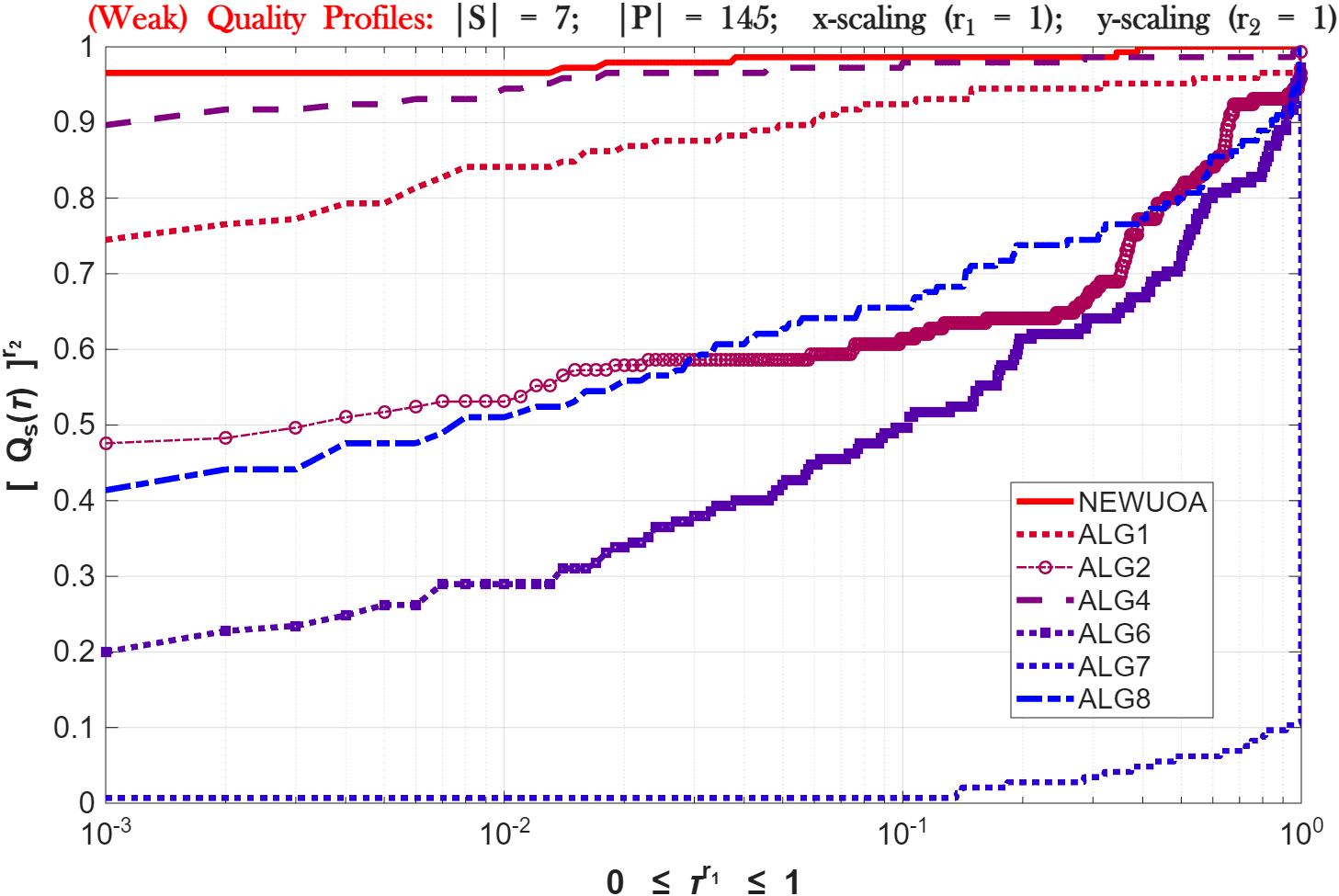} 
		\label{fig:1-case-dfo_1-1_250}
	\end{subfigure}
	\hfill
	\begin{subfigure}[b]{0.7\textwidth}
		\centering
		\includegraphics[width=\textwidth]{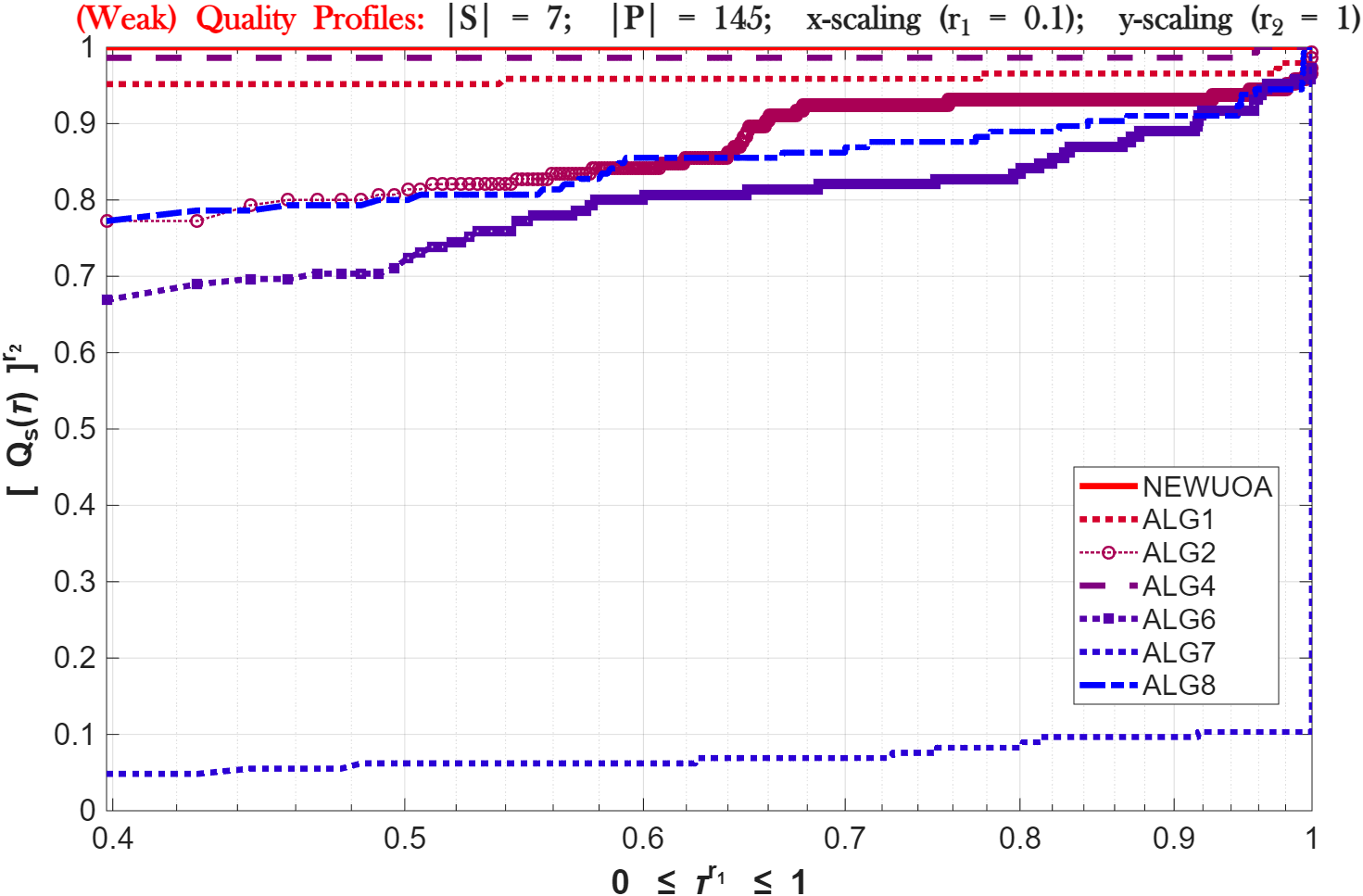} 
		\label{fig:1-case-dfo_01-1_250}
	\end{subfigure}
	\caption{Example of weak quality profiles for the derivative--free solvers in ${\cal S}$. The allotted maximum budget for solvers is set to $250 \cdot n$ {\em function evaluations}.}
	\label{fig:1-case-dfo_250}
\end{figure}

\section{Test Set Profiles}
\label{sec:test_set_profiles}
Here we propose a graphical tool to be coupled with quality profiles, in order both to further analyze the robustness of the solvers in ${\cal S}$, and to give a quantitative measure of the reliability of the test set ${\cal P}$ adopted for benchmarking.

The analysis in this section draws its inspiration from Lemma \ref{lem:robustness_ratios}, as well as from \revnewmax{\cite[Theorem 1]{dolan.more}}, where the potential impact of perturbing the test set (say slightly modifying it by adding or erasing a few problems) may have on the performance profiles. The final indication in \cite{dolan.more} is that the larger $|{\cal P}|$, the smaller the impact, which is surely not a surprising outcome. Nevertheless, the definition of the performance profiles allows for a more analytical conclusion in \cite{dolan.more}--Theorem 1, that possibly cannot be extended neither to data profiles nor to quality profiles.

The fact that the performance of the solvers in ${\cal S}$, on the problems in ${\cal P}$, might be strongly affected by the composition of the test set, yields the obvious conclusion that besides the relative benchmarking of the solvers in ${\cal S}$, the reliability of the test set ${\cal P}$ used for benchmarking is not a less relevant issue. In this regard, a classic reference yielding the last conclusion is given by \cite{wolpert1997no}, where in any case no practical quantitative tools seem indicated to treat the issue. In the case of quality profiles, their assessment may easily suggest one of such a quantitative tools, that we addressed here as {\em test set profiles}. Basically, they are a combination of the procedure adopted for computing quality profiles and the application of the well--known {\em bootstrapping} methodology (the reader is referred to \cite{Davison_Hinkley_1997} for details, and to \cite{Ca_Co_Fa:2024} for a recent practical application of bootstrapping in finance). 

Here we are not interested in giving precise details on the theoretical foundations of bootstrapping and related cross--validation techniques. Nevertheless, we observe that bootstrapping is a statistical technique used to estimate the sampling distribution of an estimator, by repeatedly resampling from the original data set with replacement. It is typically employed to obtain robust estimates of standard errors and confidence intervals for population parameters, based on the Central Limit Theorem. We recall that a number of bootstrapping different techniques have been adopted in the literature. 
\revnewmax{Note that bootstrapping is actually a special kind of {\em Montecarlo simulation} where the random generator is the empirical distribution of the observed data (sampling with replacement).}
\par
In summary, given the test set ${\cal P}$ and the solvers in ${\cal S}$, in the context of the current paper bootstrapping is applied in the following steps:
\begin{enumerate}
	\item compute the quality profiles associated to the pair ${\cal P}$ and ${\cal S}$, as by the previous sections in this paper;
	\item generate a novel test set ${\cal \hat P}$, using the information contained in ${\cal P}$, and compute the quality profiles associated to the pair ${\cal \hat P}$ and ${\cal S}$;
	\item repeat the item 2. a number {\tt cycles} of times.  
\end{enumerate}
The results of the above procedure trivially depend on the bootstrapping technique we are adopting, \ie on the choice for selecting the set ${\cal \hat P}$. A number of different configurations were proposed in the literature, including the next three choices:
\begin{itemize}
	\item resampling the elements in ${\cal P}$ by a simple shuffling and allowing replacement, \ie $|{\cal \hat P}| = |{\cal P}|$ (our choice); 
	\item resampling the elements in ${\cal P}$ by setting a smaller sample size for $|{\cal \hat P}|$, but possibly allowing replacement; 
	\item resampling the elements in ${\cal P}$ by setting a smaller sample size for $|{\cal \hat P}|$, but without replacement.
\end{itemize}
The output of the above procedure 1.--\,3. is given by the family of quality profiles $\{[Q_s(\tau)]^{r_2}\}_{1, \ldots, {\tt cycles}}$, 
so that for any given solver $ s \in {\cal S}$ and for any given value $\tau \in [0,1]$, we can compute the {\em variance} $\sigma_s^2(\tau)$ of the data $\{[Q_s( \tau)]^{r_2}\}_{1, \ldots, {\tt cycles}}$, and then plot it with respect to $\tau \in [0,1]$. 
\par
\revnewmax{From a statistical point of view, test set profiles can be interpreted as an empirical measure of the sensitivity of quality profiles with respect to random perturbation of the benchmarking test set.
Let ${\cal P}$ be regarded as a finite sample extracted from a hypothetical population of optimization problems. The quality profile $Q_s(\tau)$ associated with a solver $s \in {\cal S}$,
may then be interpreted as a statistic computed from the sample ${\cal P}$. Since different samples of optimization problems would generally produce different quality profiles, the quantity $Q_s(\tau)$ is itself a random variable, whose variability reflects the dependence of the benchmarking conclusions on the chosen test set. The bootstrapping procedure described above approximates the sampling distribution of $Q_s(\tau)$ by repeatedly generating resampled test sets
$P^{(1)}, \ldots ,P^{(cycles)},$
and recomputing the corresponding quality profiles. Consequently, the collection
 $\{[Q_s(\tau)]^{r_2}\}_{1, \ldots, {\tt cycles}}$,
can be viewed as an empirical realization of the sampling distribution of the estimator $Q_s(\tau)$.
Under this interpretation, the variance
$\sigma_s^2(\tau)$
estimates the statistical uncertainty associated with the value of the quality profile at the threshold $\tau$. Therefore:
\begin{itemize}
	\item small values of $\sigma_s^2(\tau)$ indicate that the observed quality profile is stable under perturbations of the test set, suggesting that the benchmarking conclusions are robust and weakly dependent on the particular composition of ${\cal P}$;
	\item large values of $\sigma_s^2(\tau)$ indicate instead a strong dependence on the sampled problems, meaning that small modifications of the test set may significantly alter the relative ranking or perceived efficiency of the solvers.
	\end{itemize}
In this sense, test set profiles do not measure the absolute quality of a solver, but rather the statistical reliability of the conclusions drawn from the benchmark itself. Moreover, the behavior of $\sigma_s^2(\tau)$ as a function of $\tau$ provides additional information on the regions where the benchmarking process is most uncertain. For example, large variances concentrated close to $\tau\approx 1$ may indicate instability in the discrimination among high-accuracy solvers, whereas large variances for smaller values of $\tau$ may reveal sensitivity in the identification of overall superior methods.}
\par
An example of the resulting picture on randomly generated data is reported in Figure \ref{fig:test_set_2}, for the case of $|{\cal P}|=100$, ${\cal S}=\{Solver-1,\,Solver-2,\,Solver-3\}$ and respectively
	$$ \begin{array}{lcl}
			f^{(p)}(x_0^{(p)}) = -1 + 2 \cdot rand(p), & \qquad & p=1, \ldots, 100, \\
				& & 	\\
			f_1^{(p)}(x^*) = 
			-2 + 2 \cdot rand(p), & & p=1, \ldots, 100,	\\
				& & 	\\
			f_2^{(p)}(x^*) = 
			-2.01 + 2 \cdot rand(p), & & p=1, \ldots, 100,	\\
				& & 	\\
			f_3^{(p)}(x^*) = -3 + 2 \cdot rand(p),  & & p=1, \ldots, 100,
	   \end{array} $$ 
\begin{figure}[h]
	\centering
	\includegraphics[width=0.9\textwidth]{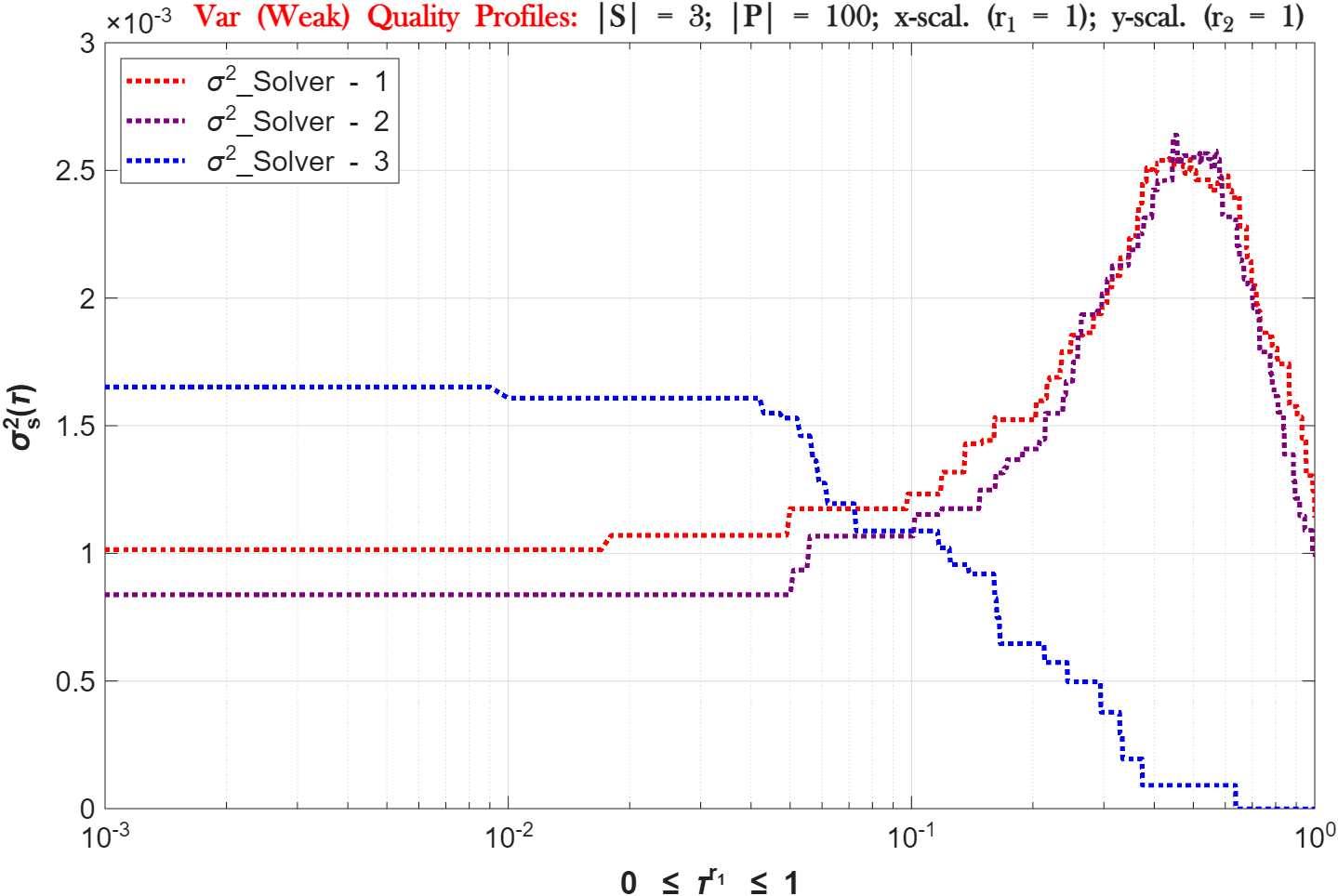}
	\medskip
	\caption{Example of test set profiles for three solvers, where for the solver $s \in {\cal S}$ the relative plot represents the variance $\sigma_s^2(\tau)$ of the data $\{[Q_s(\tau)]^{r_2}\}_{1, \ldots, {\tt cycles}}$.}
	\label{fig:test_set_2}
\end{figure}%
%
being $rand(p)$, $p=1, \ldots, 100$, uniformly distributed random numbers in $[0,1]$, and {\tt cycles}\,=\,1000. Observe that by the last data, the first and second solvers likely show similar values $f_1^{(p)}(x^*)$ and $f_2^{(p)}(x^*)$, for any $p \in {\cal P}$, and they are both worse than $f_3^{(p)}(x^*)$ (\ie the first and second solvers are less accurate than the third one). 

Hence, the following interpretation holds. Given the plots of $\sigma_s^2(\tau)$ and $\sigma_t^2(\tau)$, for any  $s,t \in {\cal S}$, if $\sigma_s^2(\tau) > \sigma_t^2(\tau)$, for $\tau \in T \subseteq [0,1]$, it means that for $\tau \in T$ the code $s$ exhibits relatively higher variance of outcomes (with respect to the solver $t$) in case the test set ${\cal P}$ is perturbed. Hence, $s$ shows less robustness than a solver $t$, for $\tau \in T \subseteq [0,1]$, when they are both applied to ${\cal P}$. Conversely, in case $\sigma_s^2(\tau) < \sigma_t^2(\tau)$, for $\tau \in T \subseteq [0,1]$, then it implies both the robustness of $s$ (with respect to the solver $t$) on ${\cal P}$, as well as the reliability of the test set ${\cal P}$ when used to benchmark $s$. 

Figure \ref{fig:test_set_1} reports the corresponding quality profiles, setting $r_1=r_2=1$ and using the same data for the three solvers and the test set ${\cal P}$ of the Figure \ref{fig:test_set_2}. We can observe the importance of considering at once both Figure~\ref{fig:test_set_2} and Figure~\ref{fig:test_set_1}: the $Solver-3$ appears more robust on the test set ${\cal P}$ when a relatively small precision (say $\tau \leq 10^{-1}$) is required to the solver (see Figure \ref{fig:test_set_2}). Conversely, it shows a relatively smaller robustness (with respect to both $Solver\!-\!1$ and $Solver\!-\!2$) when the precision increases (see again Figure~\ref{fig:test_set_2}). Nevertheless, the $Solver\!-\!3$ shows a higher accuracy as shown in Figure~\ref{fig:test_set_1}, on a wide range of the parameter $\tau \in [0,1]$. As an alternative viewpoint, we may also conclude that the test set ${\cal P}$ is comparably less appropriate to benchmark the $Solver\!-\!3$ (with respect to the solvers $Solver\!-\!1$ and $Solver\!-\!2$) when high precision is required. In any case, we also note that values of the plots on the ordinate axis in Figure \ref{fig:test_set_1} are around $10^{-3}$, so that the test set ${\cal P}$, considering the bootstrapping procedure we adopted, to a large extent shows to be reasonably consistent for the set of solvers ${\cal S}$.
\begin{figure}[h]
	\centering
	\includegraphics[width=0.9\textwidth]{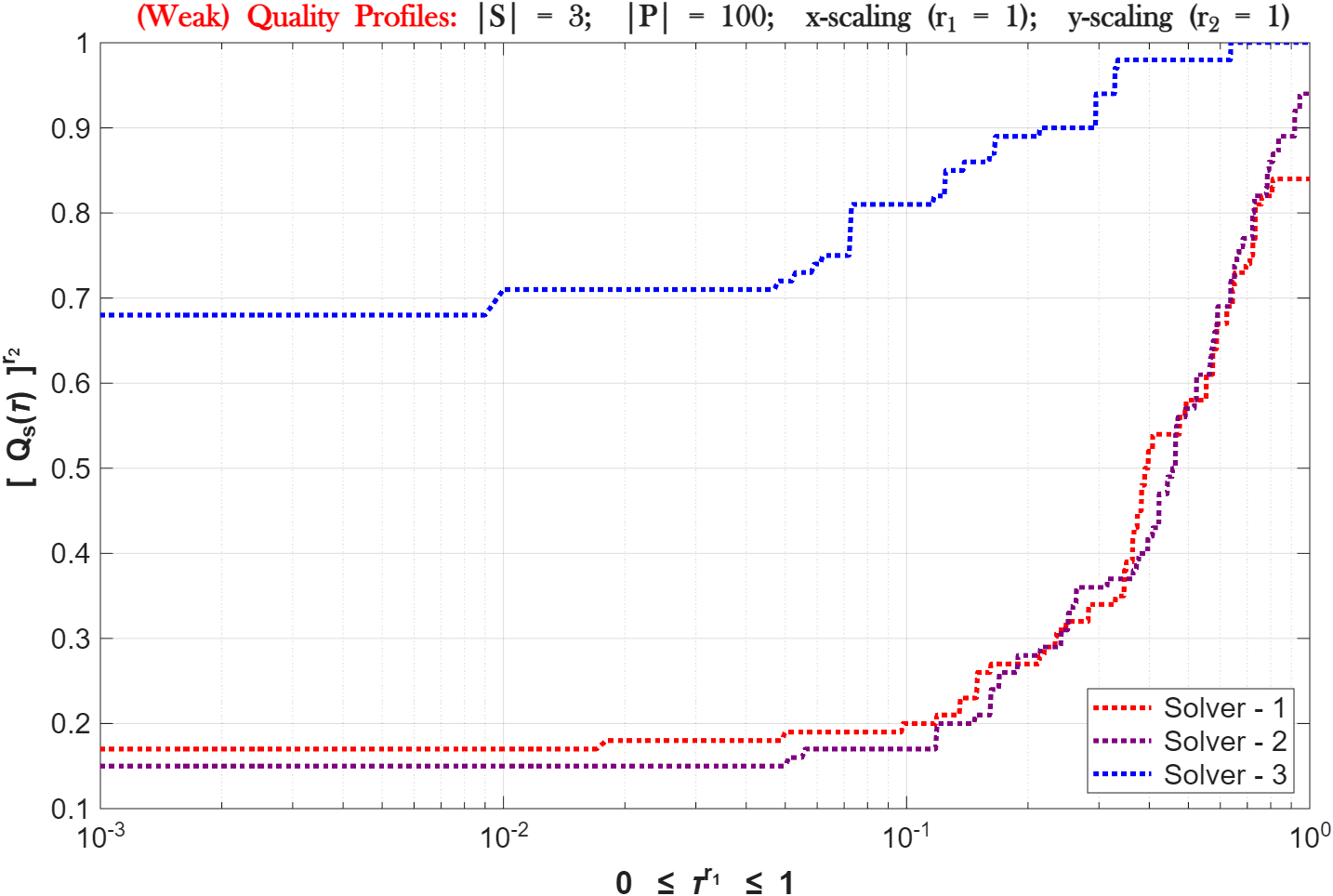}
	\medskip
	\caption{Quality profiles for the 3 solvers in Figure \ref{fig:test_set_2}.}
	\label{fig:test_set_1}
\end{figure}

We now investigate the effect of perturbing ${\cal P}$, by respectively drastically and slightly modifying $|{\cal P}|$. In particular, in case we adopt the above setting for generating data, along with the next two scenarios:
\begin{itemize}
	\item[]
	\item{\bf Scenario 1:} $|{\cal P}|=90$, \ $|{\cal S}|=3$ and {\tt cycles}\,=\,1000;
	\item[]
	\item{\bf Scenario 2:} $|{\cal P}|=50$, \ $|{\cal S}|=3$ and {\tt cycles}\,=\,1000;
	\item[]
\end{itemize}
we obtain the results in Figures \ref{fig:test_set_4} and \ref{fig:test_set_3} (Scenario~1), and Figures \ref{fig:test_set_6} and \ref{fig:test_set_5} (Scenario~2), respectively.
\begin{figure}[h]
	\centering
	\includegraphics[width=0.9\textwidth]{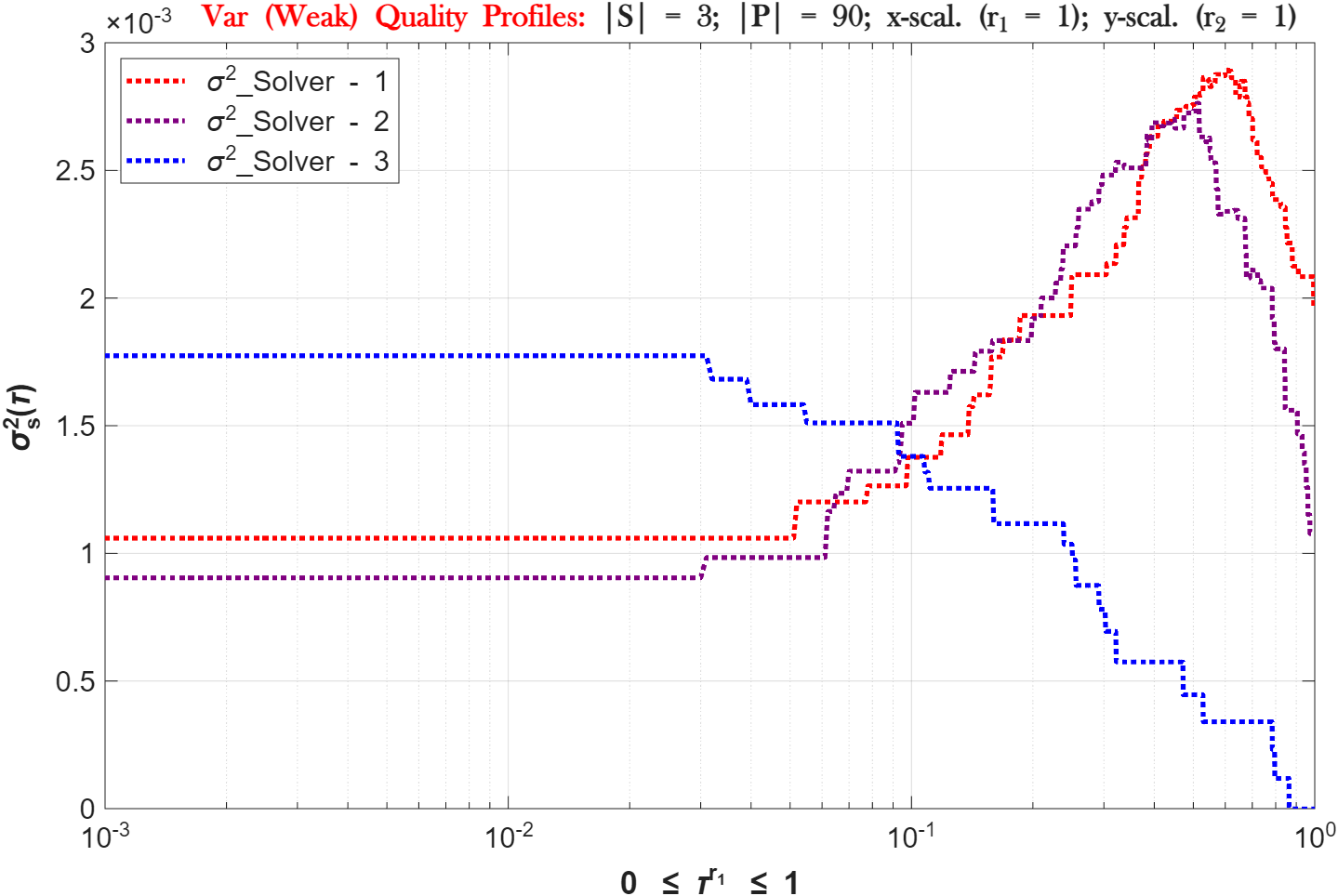}
	\medskip
	\caption{Example of test set profiles for three solvers, where for the solver $s \in {\cal S}$ the relative plot represents the variance $\sigma_s^2(\tau)$ of the data $\{[Q_s(\tau)]^{r_2}\}_{1, \ldots, {\tt cycles}}$. With respect to Figure  \ref{fig:test_set_2} here the test set contains just 90\% of the test problems.}
	\label{fig:test_set_4}
\end{figure}
\begin{figure}[h]
	\centering
	\includegraphics[width=0.9\textwidth]{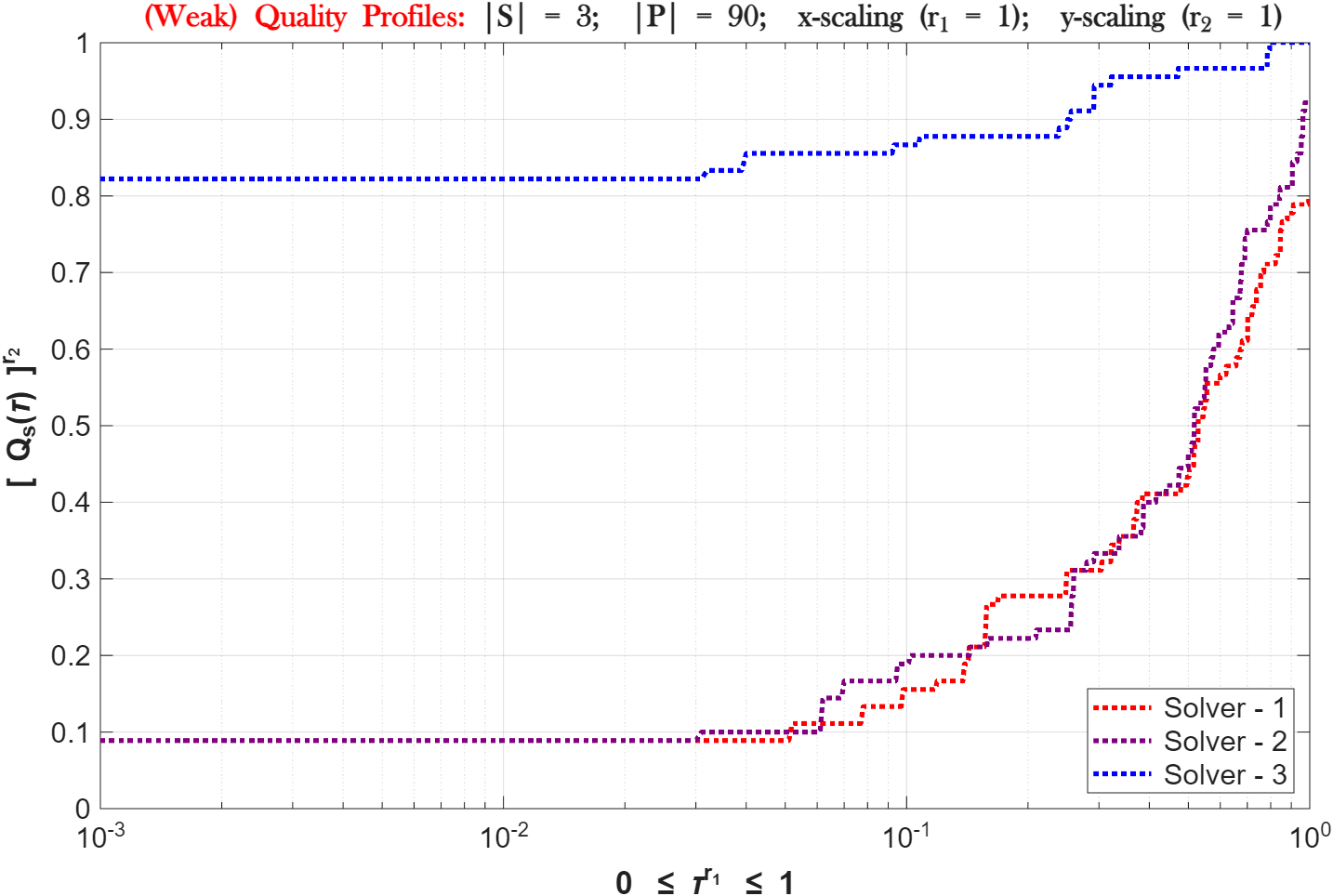}
	\medskip
	\caption{Quality profiles for the 3 solvers in Figure \ref{fig:test_set_4}. With respect to Figure  \ref{fig:test_set_2} here the test set contains just 90\% of the test problems.}
	\label{fig:test_set_3}
\end{figure}
\begin{figure}[h]
	\centering
	\includegraphics[width=0.9\textwidth]{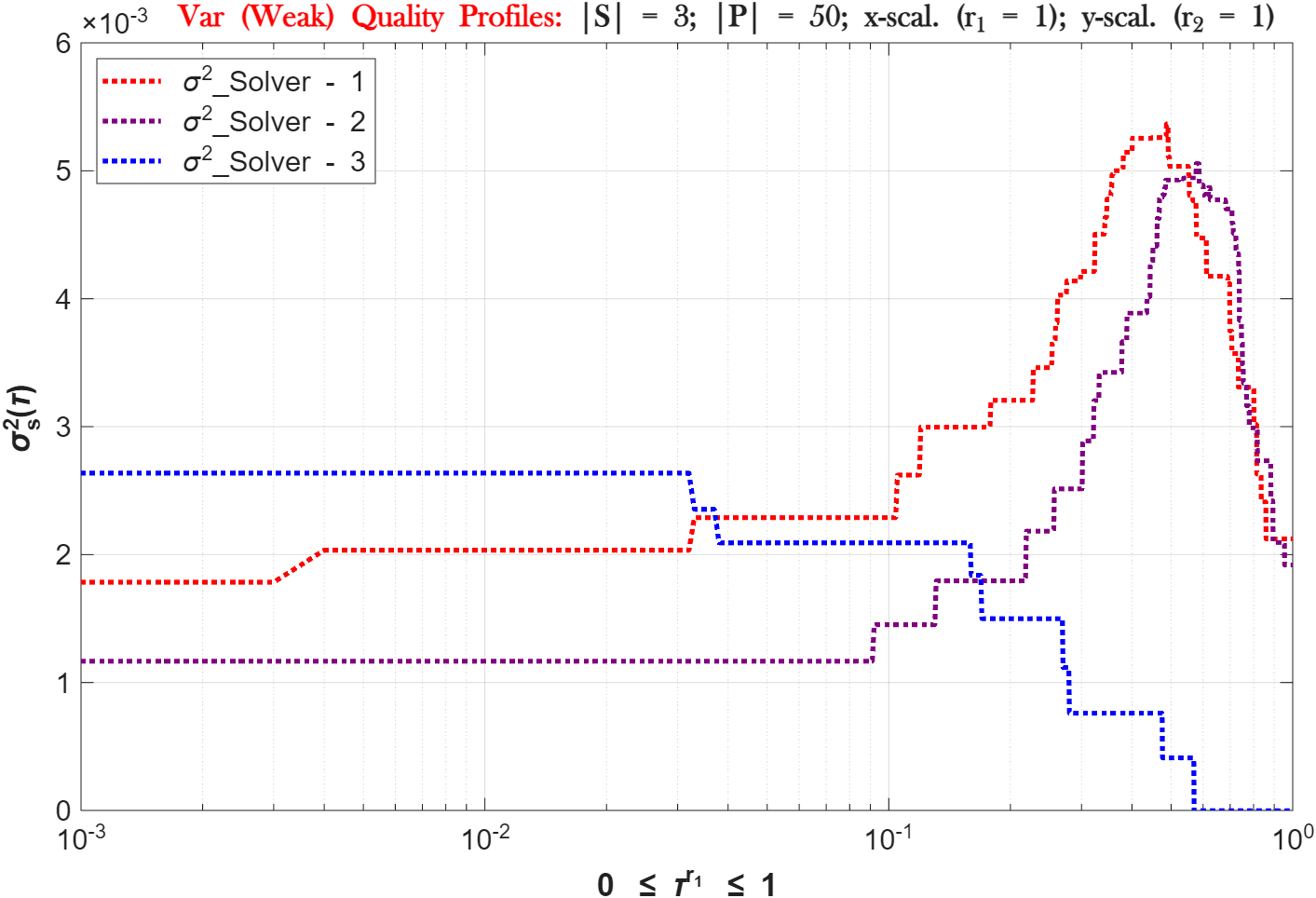}
	\medskip
	\caption{Example of test set profiles for three solvers, where for the solver $s \in {\cal S}$ the relative plot represents the variance $\sigma_s^2(\tau)$ of the data $\{[Q_s(\tau)]^{r_2}\}_{1, \ldots, {\tt cycles}}$. With respect to Figure  \ref{fig:test_set_2} here the test set contains just 50\% of the test problems.}
	\label{fig:test_set_6}
\end{figure}
\begin{figure}[h]
	\centering
	\includegraphics[width=0.9\textwidth]{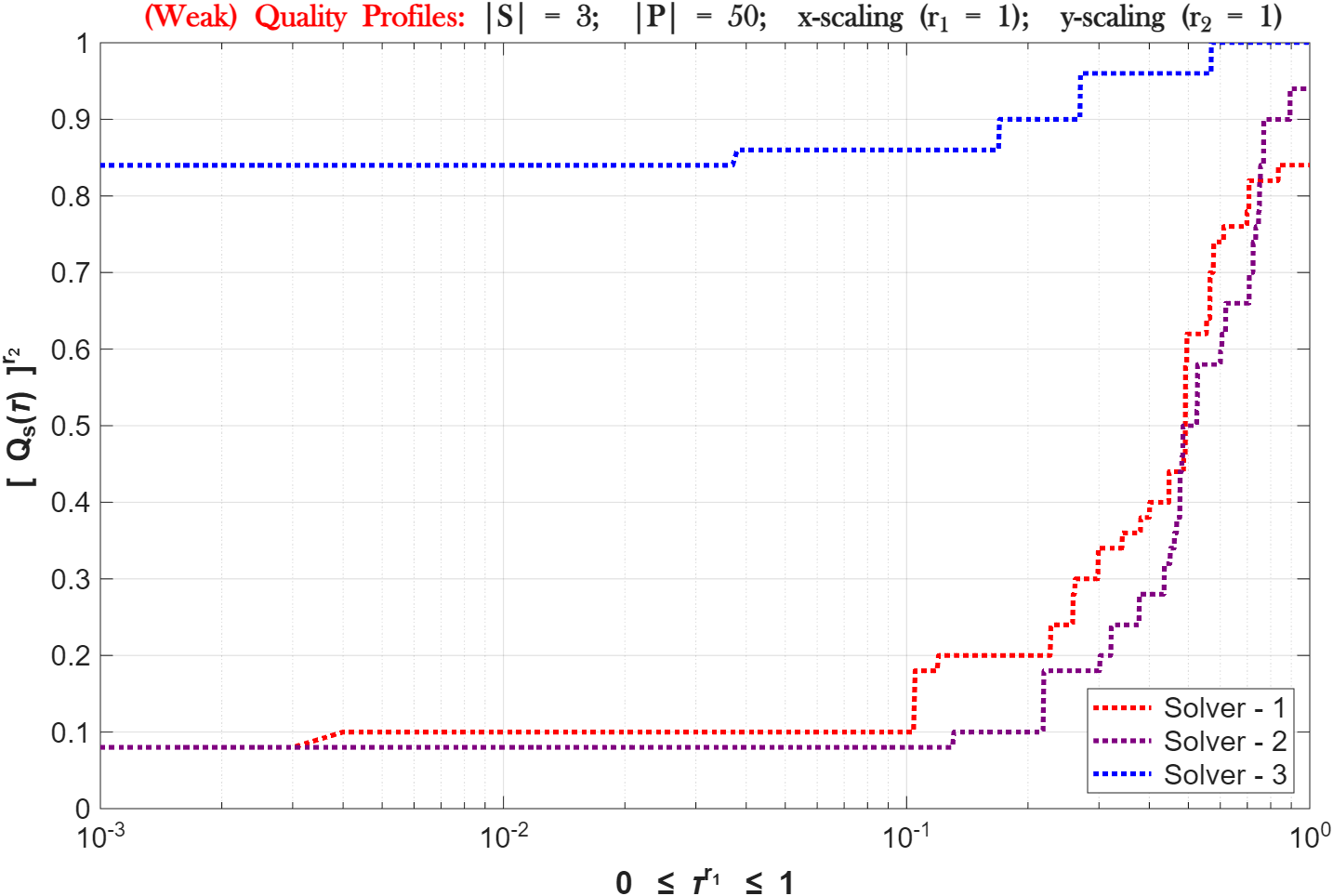}
	\medskip
	\caption{Quality profiles for the 3 solvers in Figure \ref{fig:test_set_6}. With respect to Figure  \ref{fig:test_set_2} here the test set contains just 50\% of the test problems.}
	\label{fig:test_set_5}
\end{figure}
Evidently, a quick comparison among the last pictures evidences that the smaller the cardinality $|{\cal P}|$, the worse the robustness of the solvers, as revealed by an increase of the variance $\sigma_s^2$, with $s \in {\cal S}$, associated with the solvers. This was a largely expected outcome, that perfectly also matches the results reported in \cite{dolan.more}. In addition we also observe that decreasing $|{\cal P}|$, the robustness of the solvers may tend to deteriorate even when smaller values of the precision parameter $\tau$ are considered.

Similarly to what was done in this section for quality profiles, the above bootstrapping technique can be easily coupled with performance profiles, too, showing how robust is the {\em performance} (and no more the {\em accuracy}) of any solver subject to a given perturbation of the test set ${\cal P}$. On the contrary, an immediate extension of the last arrangement also to data profiles shows some criticality, since unlike performance and quality profiles, multiple data profiles are usually generated and must be considered at once.

As a final remark, on the numerical experience reported in this section, observe that setting $|{\cal P}|=100$ and $|{\cal S}|=3$, along with {\tt cycles}\,=\,1000, required a computational time of less than a couple of seconds, on a common laptop, for generating both the related quality profiles and test set profiles.

\section{Conclusions}\label{sec:conclusions}
We propose a couple of novel tools specifically \gianni{designed for benchmarking} algorithms which possibly show convergence to different solution points on a given test set: namely the quality profiles and the test set profiles. They allow us to duly consider the quality of the obtained solutions when comparing algorithms'  outcomes, as well as to issue and analyze the consistency of the test set used for benchmarking the selected solvers. 
\par
\revnewmax{Unlike performance and data profiles, quality profiles prioritize solution quality, though their design also allows for a secondary assessment robustness.} 
\revnewmax{
The goal of the proposed quality profiles is not to repair all weaknesses of performance/data/accuracy profiles, nor to replace them with a universal benchmarking principle. Their goal is narrower and complementary: to provide a comparative tool focused on the quality of the obtained objective values, especially in benchmarking settings where different solvers may converge to different solutions and where a comparison based only on computational effort is incomplete.
}
	
\revnewmax{Moreover,
the present paper does not pursue an axiomatic characterization of benchmarking procedures. In particular, unlike the line of research by Liu et al. \cite{liu2020paradoxes,ZHI2024,liu2024meanborda}, which studies consistency properties and paradox-freeness of comparison rules, our focus is on introducing a practical data-driven profile-based tool for measuring one specific dimension of solver behavior, namely the quality of the final objective value.
Quality profiles are introduced to capture one specific dimension of solver behaviour (the quality of the solution) in a concise graphical form. Accordingly, they should be regarded as complementary to paradox-free ranking or comparison methodologies, rather than as an alternative to them. A theoretical study of quality profiles from an axiomatic or paradox-freeness perspective would be an interesting direction for future research.
}
\par
\revnewmax{We remark once more that the benefit of quality profiles should not be interpreted as solving the benchmarking problem in full generality, but rather as enriching the set of available tools for analyzing solver behavior. They are complementary tools within a broader benchmarking ecosystem that includes paradox-free comparison rules, instance-space analysis, sound experimental methodology and principled benchmark-selection guidelines.
}
\par
We considered two applications of the quality profiles  within large scale smooth optimization: the first \gianni{one considers} the comparison among four well--known algorithms; the second \gianni{one compares algorithms using} different negative curvature directions within a Truncated Newton method. 
\revnewmax{Both applications clearly demonstrate the benefit of using quality profiles to evaluate codes.}
Moreover, unlike performance and data profiles, the chance of possibly zooming in any region of the quality profiles (\ie the area $[0,1] \times [0,1]$) is definitely appealing, and allows one to discriminate among solvers' performances with greater precision. Observe that our proposal, to large extent, shares some similarities with the benchmarking procedure in \cite{beiranvand2017best}, with distinctive differences as detailed in Section \ref{sec:rel_abs_quality}. 
	
In this regard, these authors have also extended the analysis in the current paper to the case where the sets ${\cal S}$ and ${\cal P}$ specifically include derivative--free solvers and nonsmooth problems, respectively. This last consideration follows from recalling that indeed data profiles were clearly conceived to suitably benchmark performance in nonsmooth optimization frameworks, inasmuch as performance profiles for continuous optimization might be inadequate. The numerical comparison reported here highlights the complementarity between the outcomes from data profiles and quality profiles.
\par
Finally, \revnewgianni{in this paper} test set profiles are introduced as a tool for assessing the reliability of a set of test problems when benchmarking algorithms. This issue, even if often neglected, represents a relevant aspect to be considered for an in-depth comparison of different optimization codes. 
\par
\revnewmax{Typically, guidelines address benchmark selection from an ex ante perspective, by classifying test functions according to relevant properties and recommending more representative benchmark suites. By contrast, test set profiles address the complementary ex post question of whether a chosen suite is actually informative and discriminating for the specific solver comparison being carried out. Bootstrapping is used as a supporting inferential tool rather than as the core novelty of the proposal, that lies on the fact that test set profiles target the evaluation of the benchmark suite itself, whereas previous works focus on uncertainty quantification for solver-comparison diagnostics.
}
\par
A possible specific extension to multiobjective optimization frameworks represents a natural future direction which deserves attention, too.

\revnewgianni{ Conversely, the present paper deliberately avoids introducing an overly general procedure for evaluating solver performance, which could also encompass {\em stochastic optimization frameworks}. This choice is mainly motivated by the need to focus on tools designed for more specific purposes, rather than proposing a general framework that might yield inappropriate outcomes for certain classes of optimization benchmarks and algorithms. In future research, we will also investigate the assessment of solver quality in stochastic optimization frameworks, by weakening the definition \eqref{eq:def_Qs} as follows:
\begin{equation}
	\hat Q_s(\tau) = \frac{1}{|{\cal P}|} {\rm size} \left\{ p \in {\cal P} \ : \ \left( \hat f_s^{(p)}(x^\ast) + \xi \right) - \hat f_L^{(p)}  \leq \tau \left[ \hat f^{(p)} ( x_0^{(p)} ) - \hat f_L^{(p)} \right] \right\}.        \label{eq:def_Qs_bis}
\end{equation}
Here $\hat f_s^{(p)}(x^\ast)$ represents an {\em optimal reference value} (e.g., the mean value after a finite number of runs) of a solver $s$ on the test problem $p$, $\xi$ is a random variable with Gaussian distribution (mean value equal to $0$ and standard deviation equal to $1$). Furthermore, $\hat f_L^{(p)}$ is a {\em target reference value} and $\hat f^{(p)} ( x_0^{(p)} )$ in an {\em initial reference value}, to be separately assessed on the optimization framework in hand. We believe that a specific benchmarking approach for stochastic optimization problems will also be useful for better explaining and distinguishing the role of the outcomes produced by quality profiles and test set profiles.
}

\bigskip

\bmhead{Acknowledgements} 
G. Fasano thanks the Institute of Marine Engineering (CNR-INM), Italy. G. Fasano and M. Roma are
grateful to the working group GNCS of IN$\delta$AM (Istituto Nazionale di Alta Matematica), Italy, for the support they received. 

\bmhead{Funding} No financial support was received to carry on the research detailed in the current paper. 
\bmhead{Data Availability} Both data and source codes used for the reported numerical experience are publicly available (see the references cited in the text).
\bmhead{Code Availability} {\tt MATLAB} code for creating  quality profiles is available at https://github.com/fasano-g/Quality-Profiles.git, under GPL-3.0 license.
\bigskip

\section*{Declarations}
\bmhead{Conflict of interest} The authors have no conflict of interest.
\bigskip


\begin{thebibliography}{51}
	\ifx \bisbn   \undefined \def \bisbn  #1{ISBN #1}\fi
	\ifx \binits  \undefined \def \binits#1{#1}\fi
	\ifx \bauthor  \undefined \def \bauthor#1{#1}\fi
	\ifx \batitle  \undefined \def \batitle#1{#1}\fi
	\ifx \bjtitle  \undefined \def \bjtitle#1{#1}\fi
	\ifx \bvolume  \undefined \def \bvolume#1{\textbf{#1}}\fi
	\ifx \byear  \undefined \def \byear#1{#1}\fi
	\ifx \bissue  \undefined \def \bissue#1{#1}\fi
	\ifx \bfpage  \undefined \def \bfpage#1{#1}\fi
	\ifx \blpage  \undefined \def \blpage #1{#1}\fi
	\ifx \burl  \undefined \def \burl#1{\textsf{#1}}\fi
	\ifx \doiurl  \undefined \def \doiurl#1{\url{https://doi.org/#1}}\fi
	\ifx \betal  \undefined \def \betal{\textit{et al.}}\fi
	\ifx \binstitute  \undefined \def \binstitute#1{#1}\fi
	\ifx \binstitutionaled  \undefined \def \binstitutionaled#1{#1}\fi
	\ifx \bctitle  \undefined \def \bctitle#1{#1}\fi
	\ifx \beditor  \undefined \def \beditor#1{#1}\fi
	\ifx \bpublisher  \undefined \def \bpublisher#1{#1}\fi
	\ifx \bbtitle  \undefined \def \bbtitle#1{#1}\fi
	\ifx \bedition  \undefined \def \bedition#1{#1}\fi
	\ifx \bseriesno  \undefined \def \bseriesno#1{#1}\fi
	\ifx \blocation  \undefined \def \blocation#1{#1}\fi
	\ifx \bsertitle  \undefined \def \bsertitle#1{#1}\fi
	\ifx \bsnm \undefined \def \bsnm#1{#1}\fi
	\ifx \bsuffix \undefined \def \bsuffix#1{#1}\fi
	\ifx \bparticle \undefined \def \bparticle#1{#1}\fi
	\ifx \barticle \undefined \def \barticle#1{#1}\fi
	\bibcommenthead
	\ifx \bconfdate \undefined \def \bconfdate #1{#1}\fi
	\ifx \botherref \undefined \def \botherref #1{#1}\fi
	\ifx \url \undefined \def \url#1{\textsf{#1}}\fi
	\ifx \bchapter \undefined \def \bchapter#1{#1}\fi
	\ifx \bbook \undefined \def \bbook#1{#1}\fi
	\ifx \bcomment \undefined \def \bcomment#1{#1}\fi
	\ifx \oauthor \undefined \def \oauthor#1{#1}\fi
	\ifx \citeauthoryear \undefined \def \citeauthoryear#1{#1}\fi
	\ifx \endbibitem  \undefined \def \endbibitem {}\fi
	\ifx \bconflocation  \undefined \def \bconflocation#1{#1}\fi
	\ifx \arxivurl  \undefined \def \arxivurl#1{\textsf{#1}}\fi
	\csname PreBibitemsHook\endcsname
	
	\bibitem[\protect\citeauthoryear{Mor\'e and Wild}{2009}]{more.wild}
	\begin{barticle}
		\bauthor{\bsnm{Mor\'e}, \binits{J.}},
		\bauthor{\bsnm{Wild}, \binits{S.}}:
		\batitle{Benchmarking derivative--free optimization algorithms}.
		\bjtitle{SIAM Journal on Optimization}
		\bvolume{20},
		\bfpage{172}--\blpage{191}
		(\byear{2009})
	\end{barticle}
	\endbibitem
	
	\bibitem[\protect\citeauthoryear{Hansen et~al.}{2021}]{hansen2021coco}
	\begin{barticle}
		\bauthor{\bsnm{Hansen}, \binits{N.}},
		\bauthor{\bsnm{Auger}, \binits{A.}},
		\bauthor{\bsnm{Ros}, \binits{R.}},
		\bauthor{\bsnm{Mersmann}, \binits{O.}},
		\bauthor{\bsnm{Tu{\v{s}}ar}, \binits{T.}},
		\bauthor{\bsnm{Brockhoff}, \binits{D.}}:
		\batitle{Coco: A platform for comparing continuous optimizers in a black-box
			setting}.
		\bjtitle{Optimization Methods and Software}
		\bvolume{36}(\bissue{1}),
		\bfpage{114}--\blpage{144}
		(\byear{2021})
	\end{barticle}
	\endbibitem
	
	\bibitem[\protect\citeauthoryear{Beiranvand et~al.}{2017}]{beiranvand2017best}
	\begin{barticle}
		\bauthor{\bsnm{Beiranvand}, \binits{V.}},
		\bauthor{\bsnm{Hare}, \binits{W.}},
		\bauthor{\bsnm{Lucet}, \binits{Y.}}:
		\batitle{Best practices for comparing optimization algorithms}.
		\bjtitle{Optimization and Engineering}
		\bvolume{18},
		\bfpage{815}--\blpage{848}
		(\byear{2017})
	\end{barticle}
	\endbibitem
	
	\bibitem[\protect\citeauthoryear{Dolan and Mor\'e}{2002}]{dolan.more}
	\begin{barticle}
		\bauthor{\bsnm{Dolan}, \binits{E.D.}},
		\bauthor{\bsnm{Mor\'e}, \binits{J.}}:
		\batitle{Benchmarking optimization software with performance profiles}.
		\bjtitle{Mathematical Programming}
		\bvolume{91},
		\bfpage{201}--\blpage{213}
		(\byear{2002})
	\end{barticle}
	\endbibitem
	
	\bibitem[\protect\citeauthoryear{Gould and Scott}{2016}]{gould2016note}
	\begin{barticle}
		\bauthor{\bsnm{Gould}, \binits{N.}},
		\bauthor{\bsnm{Scott}, \binits{J.}}:
		\batitle{A note on performance profiles for benchmarking software}.
		\bjtitle{ACM Transactions on Mathematical Software (TOMS)}
		\bvolume{43}(\bissue{2}),
		\bfpage{1}--\blpage{5}
		(\byear{2016})
	\end{barticle}
	\endbibitem
	
	\bibitem[\protect\citeauthoryear{Hekmati and
		Mirhajianmoghadam}{2019}]{hekmati2019nested}
	\begin{barticle}
		\bauthor{\bsnm{Hekmati}, \binits{R.}},
		\bauthor{\bsnm{Mirhajianmoghadam}, \binits{H.}}:
		\batitle{Nested performance profiles for benchmarking software}.
		\bjtitle{Statistics, Optimization \& Information Computing}
		\bvolume{7}(\bissue{4}),
		\bfpage{709}--\blpage{715}
		(\byear{2019})
	\end{barticle}
	\endbibitem
	
	\bibitem[\protect\citeauthoryear{Audet and Hare}{2017}]{audet.hare.book}
	\begin{bbook}
		\bauthor{\bsnm{Audet}, \binits{C.}},
		\bauthor{\bsnm{Hare}, \binits{W.}}:
		\bbtitle{Derivative--Free and Blackbox Optimization}.
		\bsertitle{Springer Series in Operations Research and Financial Engineering}.
		\bpublisher{Springer},
		\blocation{Cham, Switzerland}
		(\byear{2017})
	\end{bbook}
	\endbibitem
	
	\bibitem[\protect\citeauthoryear{Hare and
		Sagastiz{\'a}bal}{2006}]{hare2006benchmark}
	\begin{barticle}
		\bauthor{\bsnm{Hare}, \binits{W.}},
		\bauthor{\bsnm{Sagastiz{\'a}bal}, \binits{C.}}:
		\batitle{Benchmark of some nonsmooth optimization solvers for computing
			nonconvex proximal points}.
		\bjtitle{Pacific Journal on Optimization}
		\bvolume{3},
		\bfpage{545}--\blpage{573}
		(\byear{2006})
	\end{barticle}
	\endbibitem
	
	\bibitem[\protect\citeauthoryear{Wolpert et~al.}{1997}]{wolpert1997no}
	\begin{barticle}
		\bauthor{\bsnm{Wolpert}, \binits{D.H.}},
		\bauthor{\bsnm{Macready}, \binits{W.G.}}, \betal:
		\batitle{No free lunch theorems for optimization}.
		\bjtitle{IEEE Transactions on Evolutionary Computation}
		\bvolume{1}(\bissue{1}),
		\bfpage{67}--\blpage{82}
		(\byear{1997})
	\end{barticle}
	\endbibitem
	
	\bibitem[\protect\citeauthoryear{Liu et~al.}{2020}]{liu2020paradoxes}
	\begin{barticle}
		\bauthor{\bsnm{Liu}, \binits{Q.}},
		\bauthor{\bsnm{Gehrlein}, \binits{W.V.}},
		\bauthor{\bsnm{Wang}, \binits{L.}},
		\bauthor{\bsnm{Yan}, \binits{Y.}},
		\bauthor{\bsnm{Cao}, \binits{Y.}},
		\bauthor{\bsnm{Chen}, \binits{W.}},
		\bauthor{\bsnm{Li}, \binits{Y.}}:
		\batitle{Paradoxes in numerical comparison of optimization algorithms}.
		\bjtitle{IEEE Transactions on Evolutionary Computation}
		\bvolume{24}(\bissue{4}),
		\bfpage{777}--\blpage{791}
		(\byear{2020})
		\doiurl{10.1109/TEVC.2019.2955110}
	\end{barticle}
	\endbibitem
	
	\bibitem[\protect\citeauthoryear{Yan et~al.}{2022}]{yan2022paradox}
	\begin{botherref}
		\oauthor{\bsnm{Yan}, \binits{Y.}},
		\oauthor{\bsnm{Liu}, \binits{Q.}},
		\oauthor{\bsnm{Li}, \binits{Y.}}:
		Paradox-free analysis for comparing the performance of optimization algorithms.
		IEEE Transactions on Evolutionary Computation
		(2022)
	\end{botherref}
	\endbibitem
	
	\bibitem[\protect\citeauthoryear{Zhi et~al.}{2024}]{ZHI2024}
	\begin{botherref}
		\oauthor{\bsnm{Zhi}, \binits{Y.}},
		\oauthor{\bsnm{Liu}, \binits{Q.}},
		\oauthor{\bsnm{Yan}, \binits{Y.}},
		\oauthor{\bsnm{Jing}, \binits{Y.}}:
		Paradox-free data analysis in numerical comparisons of optimization
		algorithms:.
		International Journal of Swarm Intelligence Research
		\textbf{15}(1)
		(2024)
		\doiurl{10.4018/IJSIR.356514}
	\end{botherref}
	\endbibitem
	
	\bibitem[\protect\citeauthoryear{da~Fonseca and
		Fonseca}{2010}]{da2010attainment}
	\begin{bchapter}
		\bauthor{\bsnm{Fonseca}, \binits{V.G.}},
		\bauthor{\bsnm{Fonseca}, \binits{C.M.}}:
		\bctitle{The attainment-function approach to stochastic multiobjective
			optimizer assessment and comparison}.
		In: \bbtitle{Experimental Methods for the Analysis of Optimization Algorithms},
		pp. \bfpage{103}--\blpage{130}.
		\bpublisher{Springer},
		\blocation{Berlin, Heidelberg}
		(\byear{2010})
	\end{bchapter}
	\endbibitem
	
	\bibitem[\protect\citeauthoryear{L{\'o}pez-Ib{\'a}{\~n}ez
		et~al.}{2024}]{lopez2024using}
	\begin{botherref}
		\oauthor{\bsnm{L{\'o}pez-Ib{\'a}{\~n}ez}, \binits{M.}},
		\oauthor{\bsnm{Vermetten}, \binits{D.}},
		\oauthor{\bsnm{Dreo}, \binits{J.}},
		\oauthor{\bsnm{Doerr}, \binits{C.}}:
		Using the empirical attainment function for analyzing single-objective
		black-box optimization algorithms.
		IEEE Transactions on Evolutionary Computation
		(2024)
	\end{botherref}
	\endbibitem
	
	\bibitem[\protect\citeauthoryear{Smith-Miles et~al.}{2014}]{smith2014objective}
	\begin{barticle}
		\bauthor{\bsnm{Smith-Miles}, \binits{K.}},
		\bauthor{\bsnm{Baatar}, \binits{D.}},
		\bauthor{\bsnm{Wreford}, \binits{B.}},
		\bauthor{\bsnm{Lewis}, \binits{R.}}:
		\batitle{Towards objective measures of algorithm performance across instance
			space}.
		\bjtitle{Computers \& Operations Research}
		\bvolume{45},
		\bfpage{12}--\blpage{24}
		(\byear{2014})
		\doiurl{10.1016/j.cor.2013.11.015}
	\end{barticle}
	\endbibitem
	
	\bibitem[\protect\citeauthoryear{Willemsen
		et~al.}{2024}]{willemsen2024methodology}
	\begin{botherref}
		\oauthor{\bsnm{Willemsen}, \binits{F.-J.}},
		\oauthor{\bsnm{Schoonhoven}, \binits{R.}},
		\oauthor{\bsnm{Filipovi{\v{c}}}, \binits{J.}},
		\oauthor{\bsnm{T{\o}rring}, \binits{J.O.}},
		\oauthor{\bsnm{Nieuwpoort}, \binits{R.}},
		\oauthor{\bsnm{Werkhoven}, \binits{B.}}:
		A methodology for comparing optimization algorithms for auto-tuning.
		Future Generation Computer Systems
		(2024)
	\end{botherref}
	\endbibitem
	
	\bibitem[\protect\citeauthoryear{Opara and
		Arabas}{2023}]{opara2023benchmarking}
	\begin{barticle}
		\bauthor{\bsnm{Opara}, \binits{K.}},
		\bauthor{\bsnm{Arabas}, \binits{J.}}:
		\batitle{Benchmarking procedures for continuous optimization algorithms}.
		\bjtitle{Journal of Telecommunications and Information Technology}
		(\byear{2023})
		\doiurl{10.26636/jtit.2011.4.1180}
	\end{barticle}
	\endbibitem
	
	\bibitem[\protect\citeauthoryear{Porcelli and Toint}{2017}]{porcelli2017note}
	\begin{barticle}
		\bauthor{\bsnm{Porcelli}, \binits{M.}},
		\bauthor{\bsnm{Toint}, \binits{P.L.}}:
		\batitle{A note on using performance and data profiles for training
			algorithms}.
		\bjtitle{ACM Transactions on Mathematical Software}
		(\byear{2017})
		\doiurl{10.1145/3310362}
	\end{barticle}
	\endbibitem
	
	\bibitem[\protect\citeauthoryear{Naser
		et~al.}{2025}]{https://doi.org/10.1002/wics.70028}
	\begin{barticle}
		\bauthor{\bsnm{Naser}, \binits{M.Z.}},
		\bauthor{\bsnm{Al-Bashiti}, \binits{M.K.}},
		\bauthor{\bsnm{Tapeh}, \binits{A.T.G.}},
		\bauthor{\bsnm{Naser}, \binits{A.}},
		\bauthor{\bsnm{Kodur}, \binits{V.}},
		\bauthor{\bsnm{Hawileh}, \binits{R.}},
		\bauthor{\bsnm{Abdalla}, \binits{J.}},
		\bauthor{\bsnm{Khodadadi}, \binits{N.}},
		\bauthor{\bsnm{Gandomi}, \binits{A.H.}},
		\bauthor{\bsnm{Eslamlou}, \binits{A.D.}}:
		\batitle{A review of benchmark and test functions for global optimization
			algorithms and metaheuristics}.
		\bjtitle{WIREs Computational Statistics}
		\bvolume{17}(\bissue{2}),
		\bfpage{70028}
		(\byear{2025})
		\doiurl{10.1002/wics.70028}
		{\href{https://arxiv.org/abs/https://wires.onlinelibrary.wiley.com/doi/pdf/10.1002/wics.70028}{{https://wires.onlinelibrary.wiley.com/doi/pdf/10.1002/wics.70028}}}.
		\bcomment{e70028 EOCS-767.R3}
	\end{barticle}
	\endbibitem
	
	\bibitem[\protect\citeauthoryear{Eckman et~al.}{2023}]{Eckman_et_al_2023}
	\begin{barticle}
		\bauthor{\bsnm{Eckman}, \binits{D.J.}},
		\bauthor{\bsnm{Henderson}, \binits{S.G.}},
		\bauthor{\bsnm{Shashaani}, \binits{S.}}:
		\batitle{Diagnostic tools for evaluating and comparing simulation-optimization
			algorithms}.
		\bjtitle{INFORMS Journal on Computing}
		\bvolume{35(2)},
		\bfpage{350}--\blpage{367}
		(\byear{2023})
	\end{barticle}
	\endbibitem
	
	\bibitem[\protect\citeauthoryear{{The MathWorks, Inc.}}{2025}]{MATLAB}
	\begin{bbook}
		\bauthor{\bsnm{{The MathWorks, Inc.}}}:
		\bbtitle{MATLAB}.
		\bpublisher{The MathWorks, Inc.},
		\blocation{Natick, Massachusetts}
		(\byear{2025}).
		\bcomment{The MathWorks, Inc.. Versione R2025a}.
		\burl{https://www.mathworks.com}
	\end{bbook}
	\endbibitem
	
	\bibitem[\protect\citeauthoryear{Gould et~al.}{2015}]{cutest}
	\begin{barticle}
		\bauthor{\bsnm{Gould}, \binits{N.I.M.}},
		\bauthor{\bsnm{Orban}, \binits{D.}},
		\bauthor{\bsnm{Toint}, \binits{P.L.}}:
		\batitle{{\sf {CUTEst:}} a constrained and unconstrained testing environment
			with safe threads}.
		\bjtitle{Computational Optimization and Applications}
		\bvolume{60},
		\bfpage{545}--\blpage{557}
		(\byear{2015})
	\end{barticle}
	\endbibitem
	
	\bibitem[\protect\citeauthoryear{Nocedal}{1980}]{nocedal:80}
	\begin{barticle}
		\bauthor{\bsnm{Nocedal}, \binits{J.}}:
		\batitle{Updating {Q}uasi-{N}ewton matrices with limited storage}.
		\bjtitle{Mathematics of Computation}
		\bvolume{35},
		\bfpage{773}--\blpage{782}
		(\byear{1980})
	\end{barticle}
	\endbibitem
	
	\bibitem[\protect\citeauthoryear{Liu and Nocedal}{1989}]{liu.nocedal}
	\begin{barticle}
		\bauthor{\bsnm{Liu}, \binits{D.C.}},
		\bauthor{\bsnm{Nocedal}, \binits{J.}}:
		\batitle{On the limited memory {BFGS} method for large scale optimization}.
		\bjtitle{Mathematical Programming}
		\bvolume{45},
		\bfpage{503}--\blpage{528}
		(\byear{1989})
	\end{barticle}
	\endbibitem
	
	\bibitem[\protect\citeauthoryear{Gilbert and Nocedal}{1992}]{gilbert.nocedal}
	\begin{barticle}
		\bauthor{\bsnm{Gilbert}, \binits{J.C.}},
		\bauthor{\bsnm{Nocedal}, \binits{J.}}:
		\batitle{Global convergence properties of conjugate gradient methods for
			optimization}.
		\bjtitle{{SIAM} Journal on Optimization}
		\bvolume{2},
		\bfpage{21}--\blpage{42}
		(\byear{1992})
	\end{barticle}
	\endbibitem
	
	\bibitem[\protect\citeauthoryear{Lin and Mor\'e}{1999}]{lin.more}
	\begin{barticle}
		\bauthor{\bsnm{Lin}, \binits{C.-J.}},
		\bauthor{\bsnm{Mor\'e}, \binits{J.}}:
		\batitle{{N}ewton's method for large bound--constrained optimization problems}.
		\bjtitle{{SIAM} Journal on Optimization}
		\bvolume{9},
		\bfpage{1100}--\blpage{1127}
		(\byear{1999})
	\end{barticle}
	\endbibitem
	
	\bibitem[\protect\citeauthoryear{Hager and Zhang}{2005}]{hager.zhang:05}
	\begin{barticle}
		\bauthor{\bsnm{Hager}, \binits{W.}},
		\bauthor{\bsnm{Zhang}, \binits{H.}}:
		\batitle{A new conjugate gradient method with guaranteed descent and efficient
			line search}.
		\bjtitle{{SIAM} Journal on Optimization}
		\bvolume{16},
		\bfpage{170}--\blpage{192}
		(\byear{2005})
	\end{barticle}
	\endbibitem
	
	\bibitem[\protect\citeauthoryear{Hager and Zhang}{2006}]{hager.zhang:06b}
	\begin{barticle}
		\bauthor{\bsnm{Hager}, \binits{W.}},
		\bauthor{\bsnm{Zhang}, \binits{H.}}:
		\batitle{Algorithm 851: {CG\_DESCENT}, a conjugate gradient method with
			guaranteed descent}.
		\bjtitle{ACM Transactions on Mathematical Software}
		\bvolume{32},
		\bfpage{113}--\blpage{137}
		(\byear{2006})
	\end{barticle}
	\endbibitem
	
	\bibitem[\protect\citeauthoryear{Curtis and
		Robinson}{2019}]{curtis.robinson:19}
	\begin{barticle}
		\bauthor{\bsnm{Curtis}, \binits{F.E.}},
		\bauthor{\bsnm{Robinson}, \binits{D.P.}}:
		\batitle{Exploiting negative curvature in deterministic and stochastic
			optimization}.
		\bjtitle{{Mathematical Programming}}
		\bvolume{176},
		\bfpage{69}--\blpage{94}
		(\byear{2019})
	\end{barticle}
	\endbibitem
	
	\bibitem[\protect\citeauthoryear{Mor\'e and Sorensen}{1979}]{more.sorensen:use}
	\begin{barticle}
		\bauthor{\bsnm{Mor\'e}, \binits{J.J.}},
		\bauthor{\bsnm{Sorensen}, \binits{D.C.}}:
		\batitle{On the use of directions of negative curvature in a modified {N}ewton
			method}.
		\bjtitle{Mathematical Programming}
		\bvolume{16},
		\bfpage{1}--\blpage{20}
		(\byear{1979})
	\end{barticle}
	\endbibitem
	
	\bibitem[\protect\citeauthoryear{Lucidi et~al.}{1998}]{lucidi.rochetich.roma}
	\begin{barticle}
		\bauthor{\bsnm{Lucidi}, \binits{S.}},
		\bauthor{\bsnm{Rochetich}, \binits{F.}},
		\bauthor{\bsnm{Roma}, \binits{M.}}:
		\batitle{Curvilinear stabilization techniques for truncated {N}ewton methods in
			large scale unconstrained optimization}.
		\bjtitle{{SIAM} Journal on Optimization}
		\bvolume{8},
		\bfpage{916}--\blpage{939}
		(\byear{1998})
	\end{barticle}
	\endbibitem
	
	\bibitem[\protect\citeauthoryear{Gould et~al.}{2000}]{glrt2}
	\begin{barticle}
		\bauthor{\bsnm{Gould}, \binits{N.I.M.}},
		\bauthor{\bsnm{Lucidi}, \binits{S.}},
		\bauthor{\bsnm{Roma}, \binits{M.}},
		\bauthor{\bsnm{Toint}, \binits{P.L.}}:
		\batitle{Exploiting negative curvature directions in linesearch methods for
			unconstrained optimization}.
		\bjtitle{Optimization Methods and Software}
		\bvolume{14},
		\bfpage{75}--\blpage{98}
		(\byear{2000})
	\end{barticle}
	\endbibitem
	
	\bibitem[\protect\citeauthoryear{Fasano et~al.}{2025}]{fapiro:2025COAP}
	\begin{barticle}
		\bauthor{\bsnm{Fasano}, \binits{G.}},
		\bauthor{\bsnm{Piermarini}, \binits{C.}},
		\bauthor{\bsnm{Roma}, \binits{M.}}:
		\batitle{Exploiting effective negative curvature directions via {SYMMBK}
			algorithm, in {N}ewton–{K}rylov methods}.
		\bjtitle{Computational Optimization and Applications}
		\bvolume{91},
		\bfpage{617}--\blpage{647}
		(\byear{2025})
	\end{barticle}
	\endbibitem
	
	\bibitem[\protect\citeauthoryear{Caliciotti et~al.}{2020}]{Ca-Fa-Po-Ro:2020}
	\begin{barticle}
		\bauthor{\bsnm{Caliciotti}, \binits{A.}},
		\bauthor{\bsnm{Fasano}, \binits{G.}},
		\bauthor{\bsnm{Potra}, \binits{F.}},
		\bauthor{\bsnm{Roma}, \binits{M.}}:
		\batitle{Issues on the use of a modified {B}unch and {K}aufman decomposition
			for large scale {N}ewton's equation}.
		\bjtitle{{Computational Optimization and Applications }}
		\bvolume{77},
		\bfpage{627}--\blpage{651}
		(\byear{2020})
	\end{barticle}
	\endbibitem
	
	\bibitem[\protect\citeauthoryear{Chandra}{1978}]{chandra}
	\begin{botherref}
		\oauthor{\bsnm{Chandra}, \binits{R.}}:
		Conjugate gradient methods for partial differential equations.
		PhD thesis,
		Yale University,
		New Haven
		(1978).
		Research Report 129
	\end{botherref}
	\endbibitem
	
	\bibitem[\protect\citeauthoryear{McCormick}{1977}]{mccormick:mathprogr}
	\begin{barticle}
		\bauthor{\bsnm{McCormick}, \binits{G.P.}}:
		\batitle{A modification of {A}rmijo's step-size rule for negative curvature}.
		\bjtitle{Mathematical Programming}
		\bvolume{13},
		\bfpage{111}--\blpage{115}
		(\byear{1977})
	\end{barticle}
	\endbibitem
	
	\bibitem[\protect\citeauthoryear{Fasano and Lucidi}{2009}]{fasano.lucidi:2009}
	\begin{barticle}
		\bauthor{\bsnm{Fasano}, \binits{G.}},
		\bauthor{\bsnm{Lucidi}, \binits{S.}}:
		\batitle{A nonmonotone truncated {N}ewton--{K}rylov method exploiting negative
			curvature directions, for large scale unconstrained optimization}.
		\bjtitle{{Optimization Letters}}
		\bvolume{3},
		\bfpage{521}--\blpage{535}
		(\byear{2009})
	\end{barticle}
	\endbibitem
	
	\bibitem[\protect\citeauthoryear{Conn et~al.}{2009}]{CSV2009}
	\begin{bbook}
		\bauthor{\bsnm{Conn}, \binits{A.R.}},
		\bauthor{\bsnm{Scheinberg}, \binits{K.}},
		\bauthor{\bsnm{Vicente}, \binits{L.N.}}:
		\bbtitle{Introduction to Derivative--Free Optimization}.
		\bpublisher{SIAM},
		\blocation{Philadelphia}
		(\byear{2009})
	\end{bbook}
	\endbibitem
	
	\bibitem[\protect\citeauthoryear{Shi et~al.}{2023}]{shi.2023}
	\begin{barticle}
		\bauthor{\bsnm{Shi}, \binits{H.-J.M.}},
		\bauthor{\bsnm{Xuan}, \binits{M.Q.}},
		\bauthor{\bsnm{Oztoprak}, \binits{F.}},
		\bauthor{\bsnm{Nocedal}, \binits{J.}}:
		\batitle{On the numerical performance of finite difference-- based methods for
			derivative--free optimization}.
		\bjtitle{Optimization Methods and Software}
		\bvolume{38}(\bissue{2}),
		\bfpage{289}--\blpage{311}
		(\byear{2023})
	\end{barticle}
	\endbibitem
	
	\bibitem[\protect\citeauthoryear{Powell}{2006}]{powell.06}
	\begin{bbook}
		\bauthor{\bsnm{Powell}, \binits{M.J.D.}}:
		In: \beditor{\bsnm{Di~Pillo}, \binits{G.}},
		\beditor{\bsnm{Roma}, \binits{M.}} (eds.)
		\bbtitle{The NEWUOA software for unconstrained optimization without
			derivatives},
		pp. \bfpage{255}--\blpage{297}.
		\bpublisher{Springer},
		\blocation{Boston, MA}
		(\byear{2006}).
		\doiurl{10.1007/0-387-30065-1_16}
	\end{bbook}
	\endbibitem
	
	\bibitem[\protect\citeauthoryear{Powell}{2008}]{powell.08}
	\begin{barticle}
		\bauthor{\bsnm{Powell}, \binits{M.J.D.}}:
		\batitle{Development of {NEWUOA} for minimization without derivatives}.
		\bjtitle{IMA Journal on Numerical Analysis}
		\bvolume{28},
		\bfpage{649}--\blpage{664}
		(\byear{2008})
	\end{barticle}
	\endbibitem
	
	\bibitem[\protect\citeauthoryear{Polak}{1971}]{polak.71}
	\begin{bbook}
		\bauthor{\bsnm{Polak}, \binits{E.}}:
		\bbtitle{Computational Methods in Optimization: A Unified Approach}.
		\bpublisher{Academic Press},
		\blocation{New York}
		(\byear{1971})
	\end{bbook}
	\endbibitem
	
	\bibitem[\protect\citeauthoryear{Dolan}{}]{dolan.99}
	\begin{botherref}
		\oauthor{\bsnm{Dolan}, \binits{E.D.}}:
		Pattern Search Behavior in Nonlinear Optimization.
		Honors Thesis, Department of Computer Science, College of William \& Mary,
		Williamsburg, Virginia. May 1999
	\end{botherref}
	\endbibitem
	
	\bibitem[\protect\citeauthoryear{Hooke and Jeeves}{1961}]{hooke.61}
	\begin{barticle}
		\bauthor{\bsnm{Hooke}, \binits{R.}},
		\bauthor{\bsnm{Jeeves}, \binits{T.A.}}:
		\batitle{Direct search solution of numerical and statistical problems}.
		\bjtitle{Journal of the Association for Computing Machinery}
		\bvolume{8}(\bissue{2}),
		\bfpage{212}--\blpage{229}
		(\byear{1961})
	\end{barticle}
	\endbibitem
	
	\bibitem[\protect\citeauthoryear{Spendley et~al.}{1962}]{spendley.62}
	\begin{barticle}
		\bauthor{\bsnm{Spendley}, \binits{W.}},
		\bauthor{\bsnm{Hext}, \binits{G.R.}},
		\bauthor{\bsnm{Himsworth}, \binits{F.R.}}:
		\batitle{Sequential application of simplex designs in optimisation and
			evolutionary operation}.
		\bjtitle{Technometrics}
		\bvolume{4}(\bissue{4}),
		\bfpage{441}--\blpage{461}
		(\byear{1962})
	\end{barticle}
	\endbibitem
	
	\bibitem[\protect\citeauthoryear{Nelder and Mead}{1965}]{nelder.65}
	\begin{barticle}
		\bauthor{\bsnm{Nelder}, \binits{J.A.}},
		\bauthor{\bsnm{Mead}, \binits{R.}}:
		\batitle{A simplex method for function minimization}.
		\bjtitle{The Computer Journal}
		\bvolume{7}(\bissue{4}),
		\bfpage{308}--\blpage{313}
		(\byear{1965})
	\end{barticle}
	\endbibitem
	
	\bibitem[\protect\citeauthoryear{Lagarias et~al.}{1998}]{lagarias.98}
	\begin{barticle}
		\bauthor{\bsnm{Lagarias}, \binits{J.C.}},
		\bauthor{\bsnm{Reeds}, \binits{J.A.}},
		\bauthor{\bsnm{Wright}, \binits{M.H.}},
		\bauthor{\bsnm{Wright}, \binits{P.E.}}:
		\batitle{Convergence properties of the {N}elder--{M}ead simplex method in low
			dimensions}.
		\bjtitle{SIAM Journal on Optimization}
		\bvolume{9}(\bissue{1}),
		\bfpage{112}--\blpage{147}
		(\byear{1998})
	\end{barticle}
	\endbibitem
	
	\bibitem[\protect\citeauthoryear{Torczon}{1991}]{torczon.98}
	\begin{barticle}
		\bauthor{\bsnm{Torczon}, \binits{V.}}:
		\batitle{On the convergence of the multidirectional search algorithm}.
		\bjtitle{SIAM Journal on Optimization}
		\bvolume{11}(\bissue{1}),
		\bfpage{123}--\blpage{145}
		(\byear{1991})
	\end{barticle}
	\endbibitem
	
	\bibitem[\protect\citeauthoryear{Davison and
		Hinkley}{1997}]{Davison_Hinkley_1997}
	\begin{bbook}
		\bauthor{\bsnm{Davison}, \binits{A.C.}},
		\bauthor{\bsnm{Hinkley}, \binits{D.V.}}:
		\bbtitle{Bootstrap Methods and Their Application}.
		\bsertitle{Cambridge Series in Statistical and Probabilistic Mathematics}.
		\bpublisher{Cambridge University Press},
		\blocation{Cambridge}
		(\byear{1997})
	\end{bbook}
	\endbibitem
	
	\bibitem[\protect\citeauthoryear{Caliciotti et~al.}{2024}]{Ca_Co_Fa:2024}
	\begin{barticle}
		\bauthor{\bsnm{Caliciotti}, \binits{A.}},
		\bauthor{\bsnm{Corazza}, \binits{M.}},
		\bauthor{\bsnm{Fasano}, \binits{G.}}:
		\batitle{From regression models to machine learning approaches for long term
			{B}itcoin price forecast}.
		\bjtitle{Annals of Operations Research}
		\bvolume{336},
		\bfpage{359}--\blpage{381}
		(\byear{2024})
	\end{barticle}
	\endbibitem
	
	\bibitem[\protect\citeauthoryear{Liu et~al.}{2024}]{liu2024meanborda}
	\begin{barticle}
		\bauthor{\bsnm{Liu}, \binits{Q.}},
		\bauthor{\bsnm{Jing}, \binits{Y.}},
		\bauthor{\bsnm{Yan}, \binits{Y.}},
		\bauthor{\bsnm{Li}, \binits{Y.}}:
		\batitle{Mean-based borda count for paradox-free comparisons of optimization
			algorithms}.
		\bjtitle{Information Sciences}
		\bvolume{660},
		\bfpage{120120}
		(\byear{2024})
		\doiurl{10.1016/j.ins.2024.120120}
	\end{barticle}
	\endbibitem
	
\end{thebibliography}

\end{document}